\newtheorem{theorem}{Theorem}[section]
\newtheorem{proposition}[theorem]{Proposition}
\newtheorem{notation}[theorem]{Notation}
\newtheorem{lemma}[theorem]{Lemma}
\newtheorem{corollary}[theorem]{Corollary}
\newtheorem{question}[theorem]{Question}
\newtheorem{definition}[theorem]{Definition}
\theoremstyle{plain}
\theoremstyle{remark}
\newtheorem{remark}[theorem]{Remark}
\newtheorem{example}[theorem]{Example}
\newtheorem{assumption}[theorem]{Assumption}
\newcommand{\C}{{\mathbb C}}
\newcommand{\Q}{{\mathbb Q}}
\newcommand{\R}{{\mathbb R}}
\newcommand{\Z}{{\mathbb Z}}
\newcommand{\N}{{\mathbb N}}
\newcommand{\Qbar}{\bar{\Q}}
\DeclareMathOperator{\Norm}{N}
\newcommand{\bP}{{\mathbb P}}
\newcommand{\bfx}{{\mathbf x}}
\newcommand{\bfv}{{\mathbf v}}
\newcommand{\bfu}{{\mathbf u}}
\newcommand{\scrN}{\mathscr{N}}
\newcommand{\scrQ}{\mathscr{Q}}
\author{Khoa D.~Nguyen}
\address{
Khoa D.~Nguyen \\
Department of Mathematics and Statistics\\
University of Calgary\\
AB T2N 1N4, Canada
}
\email{dangkhoa.nguyen@ucalgary.ca}
\keywords{Algebraic numbers, transcendental numbers, Subspace Theorem}
\subjclass[2010]{Primary: 11J87. Secondary: 11B39.}
\begin{document}
	\title[Series of reciprocals of Fibonacci and Lucas numbers]{Transcendental series of reciprocals of Fibonacci and Lucas numbers}
	
	\date{September 2020}
	
	\begin{abstract}
	Let $F_1=1,F_2=1,\ldots$ be the Fibonacci sequence. Motivated by
	the identity
	$\displaystyle\sum_{k=0}^{\infty}\frac{1}{F_{2^k}}=\frac{7-\sqrt{5}}{2}$, Erd{\"o}s and Graham asked whether $\displaystyle\sum_{k=1}^{\infty}\frac{1}{F_{n_k}}$ is irrational for
	any sequence of positive integers $n_1,n_2,\ldots$
	with $\frac{n_{k+1}}{n_k}\geq c>1$. We resolve the transcendence counterpart of their question: as a special case of our main theorem, we have that $\displaystyle\sum_{k=1}^{\infty}\frac{1}{F_{n_k}}$ is transcendental when
	$\frac{n_{k+1}}{n_k}\geq c>2$. 
	The bound $c>2$ is best possible thanks to the identity at the beginning. This paper provides a new way to apply the Subspace Theorem to obtain transcendence results and extends previous non-trivial results obtainable by only Mahler's method for special sequences of the form $n_k=d^k+r$.
	\end{abstract}
	
	\maketitle
	
	\section{introduction}\label{sec:intro}
	Let $F_1=1,F_2=1,\ldots$  be the Fibonacci sequence and let
	$L_1=1$, $L_n=F_{n-1}+F_n$ for $n\geq 2$ be the Lucas sequence. 
	In the 
	chapter   ``Irrationality and Transcendence'' of their 
	book \cite[p.~64--65]{EG80_OA}, starting from
	the Millin series:
	$$\sum_{k=0}^{\infty}\frac{1}{F_{2^k}}=\frac{7-\sqrt{5}}{2},$$
	Erd\"os and Graham asked the following:
	\begin{question}[Erd\"os-Graham, 1980]\label{q:EG}
	Is it true that $\displaystyle\sum_{k=1}^{\infty}\frac{1}{F_{n_k}}$ is irrational for any sequence $n_1<n_2<\ldots$ with $\frac{n_{k+1}}{n_k}\geq c>1$?
	\end{question}

	The transcendence counterparts of this and many questions in 
	\cite[Chapter~7]{EG80_OA} were implicit throughout 
	 the chapter, hence its title. Indeed, this 
	topic inspired intense research activities 
	most of which involved the so called Mahler's method.
	As a consequence of our main result (see Theorem~\ref{thm:general}), we resolve the 
	transcendence version of Question~\ref{q:EG} even when one is allowed to \emph{randomly mix} the Fibonacci and Lucas numbers:
	\begin{theorem}\label{thm:Fibonacci and Lucas}
	Let $c>2$ and let $n_1<n_2<\ldots$ be positive integers
	such that $\displaystyle\frac{n_{k+1}}{n_k}\geq c$ for every $k$. 
	Then the number $\displaystyle\sum_{k=1}^\infty\frac{1}{f_k}$
	is transcendental where $f_k\in\{F_{n_k},L_{n_k}\}$ for every $k$.
	\end{theorem}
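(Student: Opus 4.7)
My plan is to deduce Theorem~\ref{thm:Fibonacci and Lucas} directly from Theorem~\ref{thm:general}, so the task reduces to rewriting $\sum_{k\ge 1} 1/f_k$ in the form handled by the main theorem and to checking the hypotheses, with the condition $c>2$ providing the decisive lacunarity input.

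The first step is to use Binet's formulas $F_n=(\alpha^n-\beta^n)/\sqrt 5$ and $L_n=\alpha^n+\beta^n$, where $\alpha=(1+\sqrt 5)/2$ and $\beta=(1-\sqrt 5)/2$, together with $\alpha\beta=-1$ (equivalently $\beta/\alpha=-\alpha^{-2}$). Expanding the geometric series in $(\beta/\alpha)^{n_k}$ yields
\[\frac{1}{f_k}=\sum_{j\ge 0}\epsilon_{k,j}\,\alpha^{-(2j+1)n_k},\]
with explicit signs $\epsilon_{k,j}\in\{\pm 1,\pm\sqrt 5\}$ depending only on the parities of $n_k$ and $j$ and on whether $f_k=F_{n_k}$ or $f_k=L_{n_k}$. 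Summing over $k$ represents $\xi:=\sum_{k\ge 1}1/f_k$ as a convergent series in $\alpha^{-1}$ with coefficients in $\Q(\sqrt 5)$, supported on the exponent set $E:=\{(2j+1)n_k:\ k\ge 1,\ j\ge 0\}\subset\N$.

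The second step is to extract from $c>2$ the lacunarity of $E$. Since $n_{k+1}>2n_k$, the block $\{(2j+1)n_k\}_{j\ge 0}$ is cleanly separated from $\{(2j+1)n_{k+1}\}_{j\ge 0}$; concretely, the smallest element of the next block, $n_{k+1}$, dominates $n_k$ by a factor exceeding $2$, while within a block the exponents are arithmetic with tightly controlled coefficients. One then verifies that this is precisely the shape of hypothesis consumed by Theorem~\ref{thm:general}; the sharpness of the bound $c>2$ is witnessed by the Millin identity $\sum_{k\ge 0} F_{2^k}^{-1}=(7-\sqrt 5)/2$, which corresponds exactly to $c=2$ and produces an algebraic sum. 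With Theorem~\ref{thm:general} applicable, the transcendence of $\xi$ follows: assuming $\xi$ algebraic, a truncation of the double series at threshold $N$ produces an approximant in $\Q(\sqrt 5)$ whose height grows like a power of $\alpha^N$, while the error is of archimedean size $\alpha^{-M(N)}$ with $M(N)/N$ bounded below by a function of $c$ strictly exceeding what the height contribution can absorb; the Subspace Theorem then rules this out.

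The main obstacle I expect is not the algebraic bookkeeping but ensuring that Theorem~\ref{thm:general} is flexible enough to tolerate the random mixture $f_k\in\{F_{n_k},L_{n_k}\}$, since this choice alters the coefficient pattern $\epsilon_{k,j}$ (in particular the placement of factors of $\sqrt 5$) and affects the precise number field and set of places with which the internal Subspace-Theorem argument operates. Once the hypotheses are verified uniformly over all such choices of $f_k$, the deduction is mechanical.
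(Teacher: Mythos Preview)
Your opening sentence is right—Theorem~\ref{thm:Fibonacci and Lucas} is indeed deduced from Theorem~\ref{thm:general}, and that is exactly what the paper does—but everything after that misreads what the hypotheses of Theorem~\ref{thm:general} actually are. That theorem asks only for sequences $a_n,b_n\in\Q(\alpha)$ and $c_n\in\Q$ with $h(a_n),h(b_n),h(c_n)=o(n)$ and $u_n:=a_n\alpha^n-b_n\beta^n\in\Q$. With $\alpha=(1+\sqrt5)/2$ one simply sets $c_n\equiv1$ and, for $n=n_k$, takes $(a_n,b_n)=(1/\sqrt5,\,1/\sqrt5)$ when $f_k=F_{n_k}$ and $(a_n,b_n)=(1,-1)$ when $f_k=L_{n_k}$ (and $a_n=b_n=0$ for the remaining $n$). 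These values lie in a finite set, so the height condition is trivial, $u_{n_k}=f_k$, and Theorem~\ref{thm:general} applies directly. The ``random mixture'' you worry about causes no difficulty at all: it only decides which of four fixed pairs $(a_n,b_n)$ is used at each index.

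What you actually carry out—expanding each $1/f_k$ as a geometric series and arguing that the exponent set $E=\{(2j+1)n_k\}$ has ``cleanly separated'' blocks—is not a hypothesis of Theorem~\ref{thm:general}, and the claim is false. The block for index $k$ is $\{n_k,3n_k,5n_k,\dots\}$, which is unbounded and interleaves with every later block (already $n_{k+1}<3n_k$ whenever $n_{k+1}/n_k<3$); consecutive ratios within a block tend to $1$, so $E$ is never lacunary. The paper flags exactly this point in the introduction. The geometric-series expansion and the Subspace-Theorem heuristic you sketch belong to the \emph{proof} of Theorem~\ref{thm:general}, not its statement; by rehearsing them you are attempting to reprove that theorem in a special case (incorrectly, because of the non-lacunarity) rather than simply applying it.
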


	Although Fibonacci and Lucas numbers 
	have been discovered for hundreds of years, some of
	their basic properties 
	have been established only recently thanks to
	powerful modern methods, for example \cite{BMS06_CA,Ste13_OD}.
	The key ingredient of the proof of our main theorems is a new 
	application of the Subspace Theorem in treating transcendence 
	of series in which it is hard to control the denominators 
	of the partial sums. Before providing more details about
	the method, let us provide a very brief and incomplete survey of known
	results on irrationality and transcendence of sums like 
	$\displaystyle\sum_{k=1}^{\infty}\frac{1}{F_{n_k}}$. 
	In a nutshell,
	we can divide previous results in two groups. 
	
	\medskip
	
	The first group treats series in which the  denominators 
	of the partial sums are very small compared to the reciprocals of 
	the error terms.
	This includes work of Mignotte on the transcendence of
	$\displaystyle\sum_{k=1}^{\infty}\frac{1}{k!F_{2^k}}$ and 
	$\displaystyle\sum_{k=1}^{\infty}\frac{2+(-1)^k}{F_{2^k}}$ \cite{Mig71_QP,Mig77_AA}. 
	In fact, these results predate Erd\"os-Graham question. 
	As another example, consider $s:=\displaystyle\sum_{k=0}^{\infty}\frac{1}{F_{2^k+1}}$. For every sufficiently large integer $N$, thanks to  divisibility properties of the Fibonacci sequence and the fact that $2^k+1$ divides $2^{3k}+1$, the denominator of 
		$\displaystyle\sum_{k=1}^{N}\frac{1}{F_{2^k+1}}$ is at most
		$\displaystyle\prod_{k=\lfloor N/3\rfloor}^N F_{2^k+1}$
		which is $o(F_{2^{N+1}+1})$, hence $s$ must be irrational. We refer the readers to \cite{Bad93_AT} and the references there 
		for similar results. In fact, if $n_1<n_2<\ldots$
		satisfies $\displaystyle\frac{n_{k+1}}{n_k}\geq c>2$ then 
		$\displaystyle\sum_{k=1}^{\infty}\frac{1}{F_{n_k}}$ is irrational since $F_{n_1}\cdots F_{n_N}=o(F_{n_{N+1}})$ as $N\to\infty$. Likewise, if $\displaystyle\frac{n_{k+1}}{n_k}\geq c>3$ then $\displaystyle\sum_{k=1}^{\infty}\frac{1}{F_{n_k}}$ is transcendental
		by applying Roth's theorem and the fact that 
		$F_{n_1}\cdots F_{n_N}=O\left(F_{n_{N+1}}^{(1/2)-\epsilon}\right)$
		for an appropriate $\epsilon>0$. 
		However, replacing
		those \emph{easy} bounds 
		$2$ and $3$ respectively by smaller numbers for irrationality
		and transcendence problems appears to be 
		a \emph{very difficult} task.
		
		\medskip		
		
		The second group constitutes the majority of results in 
		this topic. 
	In 1975, Mahler \cite{Mah75_OT} reproved Mignotte's result 
	using the method he had invented nearly 50 years earlier (see 
	Nishioka's notes \cite{Nis96_MF} for an introduction
	to Mahler's method). This method is applicable when the sequence
	$n_k$ has the 
	special form $n_k=d^k+r$ where $d,r\in\Z$ with $d\geq 2$. We refer 
	the readers to \cite{BT94_TR,DKT02_TO,KKS09_TO} and the references
	there for further details.	
	In fact, before this paper, there has not been one result
	establishing the transcendence of
	$\displaystyle\sum_{k=1}^\infty\frac{1}{F_{n_k}}$
	for \emph{an arbitrary sequence} $n_k$ with $\displaystyle\frac{n_{k+1}}{n_k}\geq c$
	for any single value $c<3$ (as explained above, $c>3$ is the easy bound due to an immediate application of Roth's theorem).
	
	\medskip
	
	\emph{From now on}, let $\alpha\neq \pm 1$ be a \emph{real quadratic unit};
	this means $\Q(\alpha)$ is real quadratic and $\alpha\neq \pm 1$ 
	a unit in
	the ring of algebraic integers. Let $\sigma$ be the non-trivial automorphism of $\Q(\alpha)$ and let $\beta=\sigma(\alpha)$. Without loss of generality, we assume
	$\vert\beta\vert<1<\vert\alpha\vert$. Let $H$ and $h$ respectively 
	denote the 
	absolute multiplicative and 
	logarithmic Weil height on $\Qbar$, our main result is the following:
	\begin{theorem}\label{thm:general}
	Let $a_n,b_n,c_n$ for $n\geq 1$ be sequences of real numbers with the following properties:
	\begin{itemize}
		\item For every $n\geq 1$, $c_n\in \Q$, $a_n,b_n\in \Q(\alpha)$,
		and $u_n:=a_n\alpha^n-b_n\beta^n\in \Q$.
		
		\item $\displaystyle\lim_{n\to\infty}\frac{h(a_n)}{n}=\lim_{n\to\infty}\frac{h(b_n)}{n}=\lim_{n\to\infty}\frac{h(c_n)}{n}=0$. 
	\end{itemize}
	Let $c>2$ and let $n_1<n_2<\ldots$ be positive integers such that
	$\displaystyle\frac{n_{k+1}}{n_k}\geq c$, 
	$u_{n_k}\neq 0$, and $c_{n_k}\neq 0$ for every $k$. Then the series 
	$\displaystyle\sum_{k=1}^\infty\frac{c_{n_k}}{u_{n_k}}$
	is transcendental.
	\end{theorem}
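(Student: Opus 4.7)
\emph{Proof plan.} The approach is to apply the Subspace Theorem to algebraic approximations of $S := \sum_{k=1}^\infty c_{n_k}/u_{n_k}$ built from its partial sums. I would first exploit the hypothesis $u_n \in \Q$ to pin down $a_n, b_n$: applying $\sigma$ to $u_n = a_n\alpha^n - b_n\beta^n$ gives $(a_n + \sigma(b_n))\alpha^n = (b_n + \sigma(a_n))\beta^n$, and comparing heights (linear growth of $h((\alpha/\beta)^n)$ versus the sub-exponential $h(a_n), h(b_n) = o(n)$) forces $b_n = -\sigma(a_n)$ for all $n$ large. So after discarding finitely many terms one may assume $u_n = a_n\alpha^n + \sigma(a_n)\beta^n = \Tr_{\Q(\alpha)/\Q}(a_n\alpha^n)$.

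The sub-exponential height bounds yield $|u_{n_k}| = |\alpha|^{n_k(1+o(1))}$ and $|c_{n_k}| = e^{o(n_k)}$, so the partial sum $S_N:=\sum_{k=1}^N c_{n_k}/u_{n_k} \in \Q$ approximates $S$ with error $|S-S_N| = |\alpha|^{-n_{N+1}(1+o(1))}$, and a suitable common denominator $D_N$ satisfies $D_N \leq |\alpha|^{(\sum_{k\leq N} n_k)(1+o(1))}$. The growth hypothesis $n_{k+1}/n_k\geq c>2$ then gives $\sum_{k\leq N}n_k \leq n_N\cdot c/(c-1) < 2n_N$; the strict inequality $c > c/(c-1)$ is \emph{exactly} the condition $c > 2$ and is the source of the gap to be exploited.

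Assume for contradiction that $S$ is algebraic, fix a number field $F \supset \Q(\alpha, S)$, and for each large $N$ build a vector $\mathbf{x}_N \in F^r$ whose coordinates include $D_N$, $D_N a_{n_N}\alpha^{n_N}$, $D_N\sigma(a_{n_N})\beta^{n_N}$, $D_N(u_{n_N}S_{N-1}+c_{n_N})$, possibly together with additional coordinates coming from higher-order Laurent expansion terms of $1/u_{n_N}$ in powers of $\beta/\alpha$. At each archimedean place $w$ of $F$ I would choose $r$ linearly independent linear forms, one of which is an ``approximation form'' $S\cdot X_\ast-X_\ast$ producing $D_Nu_{n_N}(S-S_N)$ (small at places above the archimedean place $v_1$ of $\Q(\alpha)$ where $|\alpha|_{v_1}>1$), together with other forms exploiting that $\beta^{n_N}$ is small there (and symmetrically for $v_2$). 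A careful bookkeeping, combining contributions from both archimedean places of $\Q(\alpha)$, then verifies the Subspace inequality $\prod_{w,i}|L_{i,w}(\mathbf{x}_N)|_w < H(\mathbf{x}_N)^{-\epsilon}$ for a fixed $\epsilon>0$ and all large $N$, precisely under $c>2$. The Subspace Theorem then places infinitely many $\mathbf{x}_N$ in a fixed proper $F$-subspace; expanding the resulting nontrivial linear relation, the coefficient of the unbounded $\alpha^{n_N}$-coordinate is forced to vanish first, after which taking $N\to\infty$ expresses $S$ algebraically and equates $|S-S_N|\asymp|\alpha|^{-n_{N+1}}$ with $|\beta|^{n_N}=|\alpha|^{-n_N}$, incompatible with $n_{N+1}/n_N\geq c>1$ along an infinite subset.

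The main obstacle is choosing the dimension $r$ and the linear forms so that the Subspace inequality holds under the threshold $c>2$, rather than the easier Roth-type bound $c>3$ obtained by a naive two-variable setup. The gain from $3$ to $2$ must come from the quadratic structure of $\alpha$: using both archimedean places of $\Q(\alpha)$ and the trace identity $u_n=a_n\alpha^n+\sigma(a_n)\beta^n$, without inflating $H(\mathbf{x}_N)$ to the point where the balance is lost. The sharpness of $c>2$ (demonstrated by the Millin identity at $c=2$, where the analogous vectors lie on genuinely higher-degree algebraic relations beyond the reach of Subspace) shows that little margin is available.
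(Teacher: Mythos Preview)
Your outline identifies the right strategy and even the right obstruction, but the crucial step---``a careful bookkeeping \ldots\ then verifies the Subspace inequality \ldots\ precisely under $c>2$''---does not go through as you describe it. Clearing denominators by $D_N$ and using a bounded number $r$ of coordinates built from $D_N$, $D_N a_{n_N}\alpha^{n_N}$, $D_N\sigma(a_{n_N})\beta^{n_N}$, $D_N(u_{n_N}S_{N-1}+c_{n_N})$ and a few Laurent tail terms yields $H(\mathbf{x}_N)\approx|\alpha|^{\sum_{k\le N}n_k+n_N}$, while the total archimedean gain you can extract (the approximation form at places over $v_1$, plus the smallness of $\beta^{n_N}$ at $v_1$ and of $\alpha^{n_N}$ at $v_2$) is only of order $|\alpha|^{-(n_{N+1}+O(n_N))/2}$. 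Balancing these against the required $H(\mathbf{x}_N)^{-r-\epsilon}$ forces an inequality of the shape $r\cdot\frac{c}{c-1}+O(1)<c$, which fails for $c$ close to $2$ regardless of $r$. Using both archimedean places of $\Q(\alpha)$ symmetrically does not double the gain from the approximation form, because at $v_2$ the relevant linear form involves $\sigma(S)$ rather than $S$, and you do not yet know $S\in\Q(\alpha)$. In short, the scheme you propose recovers the Roth-type threshold $c>3$, not $c>2$.

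What the paper actually does is substantially more elaborate and uses three ideas you do not mention. First, instead of clearing denominators by $D_N$, one multiplies only by a ``buffer'' $x_N=\prod_{i=1}^{Q}U_{N-P+i}$ for carefully chosen $Q<P$; this keeps the non-$S$-integer contribution of $s_{N-P}$ raised only to the power equal to the number of linear forms, and one then shows that the exponent $(2^QP/c^{P-1})+2^{Q-1}/c^{Q}+\cdots$ can be pushed below $1/2$ exactly when $c>2$. Second, before applying Subspace one passes to an expression with the \emph{minimal} number of terms among all expressions of a prescribed shape; this guarantees that the hyperplane produced by Subspace is not a tautology and genuinely involves the two leading terms, forcing $S\in\Q(\alpha)$. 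Third, a separate second round (now with $Q=P-1$ and exploiting $S\in\Q(\alpha)$ via the Galois conjugate equation) is needed to reach the final contradiction $\sigma(S)=S$. None of these mechanisms is present in your sketch, and without them the Subspace inequality simply does not hold down to $c>2$.
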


	\begin{example}
	Let $n_1<n_2<\ldots$ be as in 
	Theorem~\ref{thm:Fibonacci and Lucas}. Let $s=\displaystyle\sum_{k=1}^\infty\frac{1}{f_k}$ where 
	$f_k\in \{F_{n_k},L_{n_k}\}$ for every $k$. Note that $F_n=\displaystyle\frac{\alpha^n-\beta^n}{\sqrt{5}}$ and
	$L_n=\alpha^n+\beta^n$ with $\alpha=\displaystyle\frac{1+\sqrt{5}}{2}$
	and $\beta=\displaystyle\frac{1-\sqrt{5}}{2}$. We define $c_n=1$ for every $n$.
	If $n\notin\{n_k:\ k\geq 1\}$, we define $a_n=b_n=0$.  If
	$n=n_k$ and $f_k=F_{n_k}$, define $a_n=b_n=\displaystyle\frac{1}{\sqrt{5}}$. Finally if $n=n_k$ and $f_k=L_{n_k}$,
	define $a_n=1$ and $b_n=-1$. This explains why Theorem~\ref{thm:Fibonacci and Lucas} is a special case of Theorem~\ref{thm:general}. 
	\end{example}

	\begin{example}
	One can consider linear recurrence sequences of rational numbers of
	the form $u_n=A(n)\alpha^n+B(n)\beta^n$ where $A(t),B(t)\in \Q(\alpha)[t]$. Then Theorem~\ref{thm:general} implies
	that $\displaystyle\sum\frac{1}{u_{n_k}}$ is transcendental
	since we may choose $c_n=1$, $a_n=A(n)$, and $b_n=-B(n)$. Note
	that $h(a_n)=O(\log n)$ and $h(b_n)=O(\log n)$ in this case.
	\end{example}
	
	There have been two general transcendence results using the 
	Subspace Theorem recently and both involve values of a power series
	$\sum d_nz^n$ at an algebraic number $z_0$. One is a result
	of Adamczewski-Bugeaud \cite{AB07_OT} extending an earlier work
	of Troi-Zannier \cite{TZ99_NO}. In their work, the coefficients
	of $d_n$'s form an automatic sequence 
	and $z_0$ is the reciprocal of a Pisot number. The authors rely
	on the repeating pattern of automatic sequences to 
	apply the Subspace 
	Theorem using linear forms in three variables. The other is
	a result of Corvaja-Zannier \cite{CZ02_SN} 
	treating the case that $\sum d_nz^n$ is lacunary with 
	positive real coefficients and $z_0\in (0,1)$. 
	The problem considered here is  
	different from all the above. While it is true that one can express
	$\displaystyle\sum_{k=1}^\infty\frac{1}{F_{n_k}}$ as the value
	of a power series at 
	$1/\alpha$ with $\alpha=\displaystyle\frac{1+\sqrt{5}}{2}$, 
	neither the coefficients are automatic nor the series is lacunary
	for an \emph{arbitrary} choice of $n_k$
	with $n_{k+1}/n_k\geq c>2$. 
	
	\medskip
	
	The more subtle difference and key 
	reason for the difficulty in settling our current problem are as 
	follows. Let us consider the example 
	$s=\displaystyle\sum_{k=1}^\infty\frac{1}{F_{n_k}}$
	and $\alpha=\displaystyle\frac{1+\sqrt{5}}{2}$ again. Let
	$s_N=\displaystyle\sum_{k=1}^N\frac{1}{F_{n_k}}$ 
	be the sequence of partial sums so that
	$\vert s-s_{N-1}\vert = O(\vert\alpha\vert^{-n_N})$. Now the usual
	idea is to fix a large integer $P$, then
	truncate each
	$$\frac{1}{F_{n_{N-P+i}}}=s_{N,i}+O(\vert\alpha\vert^{-n_N})$$ 
	where $s_{N,i}$ is a finite sum of units for $1\leq i\leq P-1$.
	Then we have:
	$$\vert s-s_{N-P}-s_{N,1}-\ldots-s_{N,P-1}\vert=O(\vert\alpha\vert^{-n_N})$$
	and after assuming that $s$ is algebraic, 
	one might attempt to apply the Subspace Theorem to this 
	equation for $s$, $s_{N-P}$ and the individual units 
	in each $s_{N,i}$. The difference compared to work of 
	Adamczewski-Bugeaud or Corvaja-Zannier is that while terms in
	their application of the Subspace Theorem are 
	$S$-integers 
	for an appropriate choice of a finite set of places $S$ in 
	an appropriate number field, here
	we \emph{cannot} find such an $S$ so that
	the term $s_{N-P}$ above is an $S$-integer for infinitely many $N$. 
	For this reason, in our situation, when applying
	the Subspace Theorem we may have the contribution
	$H(s_{N-M})^D$ where $D$ is the number of terms. With just the 
	constraint $c>2$, it is entirely possible for the above 
	contribution to 
	offset
	the error term $O(\vert\alpha\vert^{-n_N})$ and one fails to apply 
	the Subspace Theorem. Therefore new ideas are 
	needed to overcome this crucial issue. Moreover, after one applies the Subspace Theorem, it remains a highly nontrivial task to
	arrive at the desired conclusion from the resulting linear relation. This paper promotes the innovation that one should start with
	a certain ``minimal expression'' before applying the Subspace Theorem in order to maximize the benefit of the resulting linear relation. We refer the readers to
	 the discussion right 
	after
	Proposition~\ref{prop:error when truncating} for more details.

	\textbf{Acknowledgements.}
	The author wishes to thank Professors Yann Bugeaud and Maurice Mignotte for useful comments. 
	The author is 
	partially supported by an NSERC Discovery Grant and a CRC II 
	Research Stipend from the Government of Canada. 
	
	\section{The Subspace Theorem}	 	
  	The Subspace
  	Theorem is one of the milestones of diophantine geometry in
  	the last 50 years. The first version was obtained by 
  	Schmidt \cite{Sch70_SA} and further versions were obtained
  	by Schlickewei and Evertse \cite{Sch90_TQ,Eve96_AI,ES02_AQ}. This 
  	section follows the exposition in the book of 
  	Bombieri-Gubler \cite{BG06_HI}.

  	Let $M_{\Q}=M_{\Q}^{\infty}\cup M_{\Q}^0$ where
  	$M_{\Q}^0$ is the set of $p$-adic valuations
  	and $M_{\Q}^{\infty}$
  	is the singleton consisting of the usual archimedean 
  	valuation. More generally, for every number field $K$,
  	write $M_K=M_K^\infty\cup M_K^0$ where
  	$M_K^\infty$ is the set of archimedean places and 
  	$M_K^0$ is the set of finite places. 
  	Throughout this paper, we fix an embedding of $\Qbar$ into $\C$ and
  	let $\vert\cdot\vert$ denote the usual absolute value on $\C$.
  	Hence for a number field $K$, the set $M_K^\infty$ corresponds to
  	the set of real embeddings and pairs of complex-conjugate
  	embeddings of $K$ into $\C$.
  	For every
  	$w\in M_K$, let $K_w$ denote the completion of 
  	$K$ with respect to $w$ and denote
  	$d(w/v)=[K_w:\Q_v]$ where $v$ is the restriction of
  	$w$ to $\Q$. Following \cite[Chapter~1]{BG06_HI}, 
  	for every $w\in M_K$ restricting to $v$ on $\Q$,
  	we normalize $\vert \cdot\vert_w$ as follows:
  	$$\vert x\vert_w = \vert \Norm_{K_w/\Q_v}(x) \vert_v^{1/[K:\Q]}.$$
  	Let $m\in\N$, for every vector $\bfu=(u_0,\ldots,u_m)\in K^{m+1}\setminus\{\mathbf 0\}$ and $w\in M_K$,
  	let $\vert \bfu\vert_w:=\displaystyle\max_{0\leq i\leq m} \vert u_i\vert_w$.
  	For $P \in \bP^m(\Qbar)$, let $K$ be a number
  	field such that $P$ has a representative 
  	$\bfu\in K^{m+1}\setminus\{\mathbf 0\}$
  	and define:
  	$$H(P)=\prod_{w\in M_K} \vert \bfu\vert_w.$$
  	It is an easy fact that this is independent of the choice 
  	of $\bfu$ and the number field $K$. 
  	Then we define $h(P)=\log (H(P))$. For $\alpha\in \Qbar$, 
  	write $H(\alpha)=H([\alpha:1])$
  	and $h(\alpha)=\log(H(\alpha))$.  	
  	Later on, we will use the classical version of 
  	Roth's theorem \cite[Chapter~6]{BG06_HI} to
  	give a weak upper bound on $n_{k+1}/n_k$:
  	\begin{theorem}[Roth's theorem]\label{thm:Roth}
  	Let $\kappa>2$. Let $s$ be a real algebraic number. Then
  	there are only finitely many rational numbers
  	$s'$ such that 
  	$$\vert s'-s\vert\leq H(s')^{-\kappa}.$$
  	\end{theorem}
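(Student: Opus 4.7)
The plan is to give the classical auxiliary-polynomial argument due to Roth. Suppose, for contradiction, that infinitely many rationals $s_j=p_j/q_j$ in lowest terms satisfy $|s_j-s|\leq H(s_j)^{-\kappa}=q_j^{-\kappa}$ (the case where $s$ is rational is trivial, so assume $s$ irrational). First I would pass to a subsequence with $q_1<q_2<\cdots$ growing extremely rapidly, the precise growth depending on an integer parameter $m$ and on the slack $\delta=\kappa-2>0$.

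Fix a large $m$ and a multi-degree vector $(r_1,\ldots,r_m)$ chosen so that $r_j\log q_j$ is essentially independent of $j$. Using Siegel's lemma over $\Z$, I would produce a nonzero polynomial $P(x_1,\ldots,x_m)\in\Z[x_1,\ldots,x_m]$ of multi-degree at most $(r_1,\ldots,r_m)$, with coefficient height controlled in terms of $h(s)$ and $[\Q(s):\Q]$, that vanishes at $(s,\ldots,s)$ with index at least $m(1/2-\varepsilon)$ along the weights $(r_1,\ldots,r_m)$. This is possible because the number of available monomials $\prod_j(r_j+1)$ substantially exceeds the number of linear conditions imposed by the vanishing requirement.

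Next I would invoke Roth's lemma, which bounds the index of $P$ at the rational point $(s_1,\ldots,s_m)$ by a small quantity $\varepsilon'$, provided the $q_j$ grow fast enough relative to the $r_j$. Choosing a multi-index $\mathbf{i}$ with $\sum_j i_j/r_j\leq\varepsilon'$ and $\partial^{\mathbf{i}}P(s_1,\ldots,s_m)\neq 0$, one gets on the one hand a denominator lower bound $|\partial^{\mathbf{i}}P(s_1,\ldots,s_m)|\geq\prod_j q_j^{-(r_j-i_j)}$ since this is a nonzero rational. On the other hand, a Taylor expansion of $\partial^{\mathbf{i}}P$ around $(s,\ldots,s)$, combined with $|s_j-s|\leq q_j^{-\kappa}$ and the high index of $P$ at $(s,\ldots,s)$, yields an upper bound essentially of the shape $\prod_j q_j^{-\kappa r_j(1/2-\varepsilon)}$. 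For $\kappa>2$ and $\varepsilon,\varepsilon'$ small enough (which forces $m$ sufficiently large), the upper bound beats the lower bound, producing the contradiction.

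The main obstacle is Roth's lemma itself, which is proved by induction on $m$ through a Wronskian-type argument that extracts a nonvanishing subdeterminant from a polynomial assumed to have large index at a generic point. I would treat this lemma as a black box, since its proof is independent of the present application; everything else in the argument is routine bookkeeping with heights and exponents.
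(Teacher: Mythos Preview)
The paper does not prove Roth's theorem at all: it is stated as a classical result with a citation to \cite[Chapter~6]{BG06_HI} and then used as a black box (specifically in Proposition~\ref{prop:can't be geq 5}). So there is no ``paper's own proof'' to compare against; your proposal is simply supplying a proof where the paper chose to cite one.

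That said, your outline is the standard Thue--Siegel--Roth argument and is essentially correct as a sketch. The steps---rapid-growth subsequence, Siegel's lemma to build an auxiliary polynomial with high index at $(s,\ldots,s)$, Roth's lemma to bound the index at the rational point, and the Liouville-versus-Taylor comparison---are all in the right order with the right shape. Two small comments: first, you correctly flag that Roth's lemma is the heart of the matter, and treating it as a black box is reasonable in this context, but be aware that any referee would regard the proof as incomplete without it. Second, in the upper-bound step you should also track the contribution from the coefficient height of $P$ (coming out of Siegel's lemma), which enters as a factor of size roughly $C^{r_1}$ for some constant $C$ depending on $s$; this is absorbed because $q_1$ is chosen large relative to $C$, but it should be mentioned. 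Neither point is a genuine gap in the strategy.
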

  	
  	Let $m\in\N$, for every vector $\bfx=(x_0,\ldots,x_m)\in K^{m+1}\setminus\{\mathbf 0\}$, let $\tilde{\bfx}$ denote
  	the corresponding point in $\bP^{m}(K)$. For every $w\in M_K$,
  	denote $\vert \bfx\vert_w:=\displaystyle\max_{0\leq i\leq m} \vert x_i\vert_w$. We have:
  	
 	\begin{theorem}[Subspace Theorem]\label{thm:Subspace}
 	Let $n\in\N$, let $K$ be a number field,
 	and let $S\subset M_K$ be finite. For every $v\in S$, let 
 	$L_{v0},\ldots,L_{vn}$ be linearly independent 
 	linear forms in the variables $X_0,\ldots,X_n$
 	with $K$-algebraic coefficients in $K_v$. For every 
 	$\epsilon>0$, the
 	solutions $\bfx\in K^{n+1}\setminus\{\mathbf{0}\}$
 	of the inequality:
 	$$\displaystyle\prod_{v\in S}\prod_{j=0}^n \displaystyle\frac{\vert L_{vj}(\bfx)\vert_v}{\vert \bfx\vert_v}\leq H(\tilde{\bfx})^{-n-1-\epsilon}$$
 	are contained in finitely many hyperplanes of
 	$K^{n+1}$.
 	\end{theorem}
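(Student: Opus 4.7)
The Subspace Theorem is a cornerstone result I would not attempt to reprove from scratch, but the architecture I would follow is the one crystallized by Schmidt and refined by Schlickewei, Evertse, and Faltings, as presented in the Bombieri--Gubler account the excerpt follows. The plan has four interlocking stages: a normal-form reduction, an application of the geometry of numbers (via Minkowski's theorem on successive minima) to each putative solution, the construction via Siegel's lemma of an auxiliary multihomogeneous polynomial whose index is large at many solutions simultaneously, and a final nonvanishing step — Roth's lemma, or in Faltings' approach the product theorem — that forces all but finitely many solutions into a single proper subspace. Since the statement allows finitely many exceptional hyperplanes, this ``single proper subspace'' output can then be promoted to the full conclusion by induction on $n$, so the real work is to establish the existence of \emph{one} such exceptional subspace.

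For the setup I would enlarge $K$ if necessary so that all coefficients of the $L_{vj}$ lie in $K$, and rescale each family so that $\prod_{v\in S}\vert\det(L_{v0},\ldots,L_{vn})\vert_v=1$, absorbing the scaling constants into the $\epsilon$ budget. To each candidate solution $\mathbf{x}$ I would attach a twisted parallelepiped $\Pi(\mathbf x)\subset K_S^{n+1}$ cut out by inequalities $\vert L_{vj}(\mathbf y)\vert_v\le Q^{c_{vj}(\mathbf x)}$, with exponents $c_{vj}(\mathbf x)$ calibrated so that $\mathbf x$ itself just fits inside $\Pi(\mathbf x)$. Minkowski's second theorem applied to $\Pi(\mathbf x)$ relative to the $S$-lattice, combined with the product formula and the normalisation above, then pins down the product of the successive minima of $\Pi(\mathbf x)$, while the hypothesis $\prod_{v,j}\vert L_{vj}(\mathbf x)\vert_v/\vert\mathbf x\vert_v\le H(\tilde{\mathbf x})^{-n-1-\epsilon}$ forces at least one of those minima to be very small. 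The short vector realising that minimum, together with the rational flag it spans, is the combinatorial object one then tracks.

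By compactness of the parameter space of exponent profiles $c_{vj}(\mathbf x)/\log H(\mathbf x)$ and discreteness of rational flags, I would pass to an infinite subsequence of solutions on which these data are essentially constant; on such a subsequence Siegel's lemma produces, for any large $M$ and any prescribed multidegree vector $(d_1,\ldots,d_M)$, a nonzero multihomogeneous polynomial $P$ in $M$ groups of $n+1$ variables whose index at each of $M$ well-chosen solutions $\mathbf x^{(1)},\ldots,\mathbf x^{(M)}$ is at least $M(1-\epsilon')$. The main obstacle, and the genuinely deep heart of the proof, is the final nonvanishing step: Roth's lemma says that if the heights $H(\mathbf x^{(k)})$ grow rapidly enough and the multidegrees $d_k$ are chosen commensurately then such a $P$ cannot vanish to so large an index at all the $\mathbf x^{(k)}$ unless $P\equiv 0$, while on the other hand the smallness of $\vert L_{vj}(\mathbf x^{(k)})\vert_v$ forces an incompatible upper bound on the relevant derivatives of $P$ at those points. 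Calibrating the interlocking asymptotic parameters — the ratios $\log H(\mathbf x^{(k+1)})/\log H(\mathbf x^{(k)})$, the multidegrees $d_k$, the target index, and the separate $\epsilon$-budgets in Siegel's lemma and Roth's lemma — so that one obtains a genuine contradiction rather than a vacuous inequality is the delicate technical core, and is what takes the classical treatments many dozens of pages. Once such a contradiction is secured, all but finitely many solutions must share a common proper $K$-rational linear relation, and induction on $n$ then yields the finite union of hyperplanes in the statement.
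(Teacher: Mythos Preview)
Your outline is a reasonable high-level sketch of the classical Schmidt--Schlickewei architecture, but there is nothing to compare it against: the paper does not prove Theorem~\ref{thm:Subspace} at all. The Subspace Theorem is simply quoted as a known result, with the section explicitly following the exposition in Bombieri--Gubler \cite{BG06_HI} and citing Schmidt, Schlickewei, and Evertse for the original and subsequent versions. It is used as a black box throughout Sections~\ref{sec:s is in Qalpha} and~5.

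So your proposal is not wrong in spirit --- it is the standard roadmap --- but it is answering a question the paper never asks. If the exercise is to supply what the paper does for this statement, the correct answer is a one-line citation, not a proof sketch.
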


	\section{Preliminary results and preparation for the proof
	of Theorem~\ref{thm:general}}\label{sec:prelim}
	Throughout this section, we assume the notation in the statement of
	Theorem~\ref{thm:general} and put
	$A_k=a_{n_k}$, $B_k=b_{n_k}$, $C_k=c_{n_k}$, $U_k=u_{n_k}$
	for every $k$ to simplify the notation.
	\emph{From now on}, assume that 
	$s:=\displaystyle\sum_{k=1}^\infty\frac{C_k}{U_k}$ 
	is algebraic and let
	$K$ be the Galois closure of $\Q(\alpha,s)$.
	For $m\geq 1$, let 
	$s_m=\displaystyle\sum_{k=1}^m\frac{C_k}{U_k}$ be
	the sequence of partial sums. We will repeatedly use the 
	following observation:
	if $(t_n)_n$ is a sequence in $K^*$ such that 
	$h(t_n)/n\rightarrow 0$ as $n\to\infty$ then
	$\vert t_n\vert_v=e^{o(n)}$ as $n\to\infty$ for every $v\in M_K$; this means for every $\epsilon>0$, we have
	$e^{-\epsilon n}<\vert t_n\vert_v<e^{\epsilon n}$ for all sufficiently large $n$.

	\medskip
	
	We are given that $C_k\neq 0$ for every $k$. We may assume that
	$A_kB_k\neq 0$ for every $k$, as follows. Let $\sigma$ denote
	the nontrivial automorphism of $\Q(\alpha)$. Suppose that 
	$A_k=0$ then $U_k=-B_k\beta^{n_k}\in \Q^*$. 
	Applying $\sigma$ gives
	$U_k=-B_k\beta^{n_k}=-\sigma(B_k)/\alpha^{n_k}$, therefore
	$\sigma(B_k)/B_k=(\alpha/\beta)^{n_k}$. 
	Since $\vert \sigma(B_k)/B_k\vert =e^{o(n_k)}$, we conclude that
	$A_k=0$ is possible for only finitely many $k$. A similar conclusion
	holds for $B_k=0$ too. By ignoring the first finitely many $n_k$'s,
	we may assume $A_kB_k\neq 0$ for every $k$. 
	
	\medskip
	
	Similarly, from $\displaystyle\sum_{k=m_1}^{m_2}\frac{C_k}{U_k}=\frac{C_{m_1}}{U_{m_1}}+O(C_{m_1+1}/U_{m_1+1})$ for
	any $m_1<m_2\leq\infty$, by ignoring the first finitely many $n_k$'s, we may assume:
	\begin{equation}\label{eq: the s and sk's}
		\text{The numbers $s$ and $s_k$'s for $k=1,2,\ldots$
		are pairwise distinct and non-zero.}
	\end{equation}
	
	We start with several easy estimates:	
	\begin{lemma}\label{lem:sum of indices bound}
	\begin{itemize}
	\item [(i)] For every positive integer $m$, we have:
	$$(c-1)(n_1+\ldots+n_m) < n_{m+1}.$$
	\item [(ii)] For any positive integers $m<N$, we have:
	$$c^{N-m-1}(c-1)(n_1+\ldots+n_m)<n_N.$$ 
	\end{itemize}
	\end{lemma}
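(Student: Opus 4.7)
The plan is to establish both bounds by exploiting the multiplicative growth condition $n_{k+1} \geq c n_k$ together with a geometric series estimate; the argument is essentially a routine calculation with no genuine obstacle.

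For part (i), I would first observe that iterating the hypothesis $n_{k+1} \geq c n_k$ backwards from index $m+1$ yields the pointwise bound
\[
n_k \leq \frac{n_{m+1}}{c^{m+1-k}} \quad \text{for } 1 \leq k \leq m+1.
\]
Summing this over $k = 1, \ldots, m$ gives
\[
n_1 + n_2 + \cdots + n_m \;\leq\; n_{m+1} \sum_{j=1}^{m} \frac{1}{c^j} \;<\; n_{m+1} \sum_{j=1}^{\infty} \frac{1}{c^j} \;=\; \frac{n_{m+1}}{c-1},
\]
where the middle inequality is strict because the geometric tail is positive. Multiplying through by $c-1 > 0$ yields (i). Note that this is where the hypothesis $c > 1$ (in fact $c > 2$, though only $c > 1$ is needed for this lemma) enters: it makes the geometric series converge.

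For part (ii), I would simply iterate the growth condition once more on the right-hand side of (i). Namely, applying $n_{k+1} \geq c n_k$ repeatedly from index $m+1$ up to index $N$ gives
\[
n_N \;\geq\; c^{N-m-1}\, n_{m+1}.
\]
Combining this with part (i), I obtain
\[
c^{N-m-1}(c-1)(n_1 + \cdots + n_m) \;<\; c^{N-m-1}\, n_{m+1} \;\leq\; n_N,
\]
which is (ii). No further ingredients are needed, and the only place any subtlety enters is in ensuring the strict inequality in (i), which comes for free from comparing a finite geometric sum to its infinite tail.
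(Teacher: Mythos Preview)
Your proof is correct and essentially identical to the paper's own argument: the paper proves (i) by noting $\frac{n_1+\cdots+n_m}{n_{m+1}} < \frac{1}{c}+\frac{1}{c^2}+\cdots$ and proves (ii) by combining (i) with $n_N \geq c^{N-m-1}n_{m+1}$, exactly as you do.
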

	\begin{proof}
	Part (i) follows from 
	$$\frac{n_1+\ldots+n_m}{n_{m+1}}< \frac{1}{c}+\frac{1}{c^2}+\ldots.$$  
	Part (ii) follows from $n_N\geq c^{N-m-1}n_{m+1}$ and part (i).
	\end{proof}
	
	\begin{lemma}\label{lem:height bound}
	\begin{itemize}
		\item [(i)] $H(u_n)=\vert\alpha\vert^{n+o(n)}$ and $H(c_n/u_n)=\vert\alpha\vert^{n+o(n)}$ as $n\to\infty$.
		\item [(ii)] $H(s_N)\leq \vert\alpha\vert^{n_1+\ldots+n_N+o(n_N)}$ as $N\to\infty$.
	\end{itemize}
	\end{lemma}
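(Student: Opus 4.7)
I would begin part (i) by upgrading the growth hypothesis into a local statement: from $h(a_n)/n\to 0$ together with $h(a_n)=h(1/a_n)$ one obtains $|a_n|_w = e^{o(n)}$ at every place $w$ of $K$, and analogously for $b_n$ and $c_n$. Because $\alpha$ is a unit, $|\alpha|_w=|\beta|_w=1$ at every non-archimedean $w$, and a direct computation with the normalization of Section~2 gives $H(\alpha)=H(\beta)=|\alpha|^{1/2}$.

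For the upper bound in (i), I apply $H(xy)\leq H(x)H(y)$ and $H(x-y)\leq 2H(x)H(y)$ to $u_n = a_n\alpha^n - b_n\beta^n$, obtaining
$$H(u_n) \leq 2\, H(a_n)H(b_n)H(\alpha)^n H(\beta)^n = e^{o(n)}|\alpha|^n.$$
For the lower bound I use that $u_n\in\Q^*$, so $H(u_n)\geq \max(|u_n|,1)$; combined with $|\beta/\alpha|<1$ and $|a_n|,|b_n|=e^{o(n)}$, the archimedean identity
$$u_n = a_n\alpha^n\Bigl(1 - (b_n/a_n)(\beta/\alpha)^n\Bigr)$$
forces $|u_n|\geq \tfrac12 |a_n||\alpha|^n = |\alpha|^{n+o(n)}$ for all large $n$, hence $H(u_n)=|\alpha|^{n+o(n)}$. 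The assertion for $H(c_n/u_n)$ then follows at once from the upper bound $H(c_n/u_n)\leq H(c_n)H(u_n)$ together with the lower bound $H(c_n/u_n)=H(u_n/c_n)\geq |u_n/c_n|$.

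For (ii), iterating $H(x+y)\leq 2H(x)H(y)$ and invoking part (i) gives
$$H(s_N) \leq 2^{N-1}\prod_{k=1}^N H(C_k/U_k) \leq 2^{N-1}\, |\alpha|^{\sum_{k=1}^N (n_k + o(n_k))}.$$
Since $n_k\geq c^{k-1}n_1$ forces $N=O(\log n_N)$, the factor $2^{N-1}$ is absorbed into $|\alpha|^{o(n_N)}$, and Lemma~\ref{lem:sum of indices bound}(ii) yields $\sum_{k=1}^N n_k = O(n_N)$. The one point needing some care, and the main mild obstacle, is that the sum of the $o(n_k)$ error terms is itself $o(n_N)$: writing each as $\epsilon_k n_k$ with $\epsilon_k\to 0$, I split at a threshold $K$ past which $\epsilon_k<\delta$, bound the head $\sum_{k\leq K}\epsilon_k n_k = O(1) = o(n_N)$, and bound the tail by $\delta\sum_{k=1}^N n_k = O(\delta\, n_N)$; letting $\delta\to 0$ closes the argument. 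Beyond this uniformity step, the lemma is routine.
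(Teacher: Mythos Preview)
Your proof is correct and follows essentially the same approach as the paper: both compute $|u_n|=|\alpha|^{n+o(n)}$ from the archimedean side and bound the non-archimedean contribution by $e^{o(n)}$, and both handle the $\sum o(n_k)$ error in part~(ii) via a head/tail split combined with Lemma~\ref{lem:sum of indices bound}. The only cosmetic differences are that the paper uses the sharper bound $H(\sum_{i=1}^N x_i)\leq N\prod H(x_i)$ in place of your iterated $2^{N-1}$, and parametrizes the split by the number $M$ of trailing terms rather than a fixed head-threshold $K$; neither affects the argument.
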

	\begin{proof}
	Due to the fact that $u_n\in\Q$, $\alpha$ and $\beta$ are units, 
	and our assumption on the
	$A_n$'s and $B_n$'s, we have
	$\vert u_n\vert=\vert \alpha\vert^{n+o(n)}$
	while the non-archimedean contribution
	is $e^{o(n)}$. This proves the first assertion of part (i), the remaining one follows since $H(c_n)=\vert\alpha\vert^{o(n)}$ as $n\to\infty$. 
	
	For part (ii), we use the inequality:
	$$H(s_N)\leq N\prod_{i=1}^N H(c_{n_i}/u_{n_i}).$$ 
	There exists $\delta_1>0$ such that 
	$H(c_{n_i}/u_{n_i})\leq \vert\alpha\vert^{n_i+\delta_1n_i}$ for every $i$ by part (i).
	Given any $\epsilon>0$, part (i) also gives that  
	$H(u_{n_i})\leq \vert\alpha\vert^{n_i+\epsilon n_i}$
	for every sufficiently large $i$. Choose a large
	integer $M$ so that 
	$$\delta_1(n_1+\ldots+n_{N-M})\leq \epsilon n_N$$
	for every $N>M$; 
	this is possible thanks to Lemma~\ref{lem:sum of indices bound}.
	Hence for all sufficiently large $N$, we have:
	$$H(s_N)\leq N\vert\alpha\vert^{n_1+\ldots+n_N+\epsilon n_N+\epsilon(n_{N-M+1}+\ldots+n_N)}\leq \vert \alpha\vert^{n_1+\ldots+n_N+4\epsilon n_N}$$
	and this finishes the proof.
	\end{proof}
	
	\begin{corollary}\label{cor:s is irrational}
	$s$ is irrational.
	\end{corollary}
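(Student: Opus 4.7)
The plan is to argue by contradiction. Suppose $s = p/q \in \mathbb{Q}$; then by \eqref{eq: the s and sk's} the rational number $s - s_N$ is nonzero and its denominator divides $q\cdot \mathrm{den}(s_N) \le q\,H(s_N)$, which gives the trivial lower bound
$$|s - s_N| \ge \frac{1}{q\, H(s_N)}.$$
No invocation of Roth's theorem is needed here, since as the introduction hints, $c > 2$ is the ``easy'' threshold for irrationality.

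For the matching upper bound I would estimate the tail $s - s_N = \sum_{k > N} C_k/U_k$ directly. The hypotheses $h(a_n)/n, h(b_n)/n, h(c_n)/n \to 0$ together with $u_n \in \mathbb{Q}$ (as exploited in the proof of Lemma~\ref{lem:height bound}(i)) yield the archimedean estimate $|C_k/U_k| = |\alpha|^{-n_k + o(n_k)}$. Since $n_k \ge c^{k-N-1} n_{N+1}$ with $c > 1$, a geometric-series comparison gives
$$|s - s_N| \le |\alpha|^{-(1 - \epsilon) n_{N+1}}$$
for any prescribed $\epsilon > 0$ and all sufficiently large $N$.

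Combining these two estimates with the height bound $H(s_N) \le |\alpha|^{n_1 + \cdots + n_N + o(n_N)}$ from Lemma~\ref{lem:height bound}(ii) yields
$$(1 - \epsilon)\, n_{N+1} \;\le\; n_1 + \cdots + n_N + o(n_N) + O(1).$$
By Lemma~\ref{lem:sum of indices bound}(i), the left-hand side is at least $(1 - \epsilon)(c-1)(n_1 + \cdots + n_N)$. Since $c > 2$, one can fix $\epsilon$ small so that $(1 - \epsilon)(c-1) > 1$, and then the displayed inequality fails once $N$ is large enough, contradicting the assumption that $s$ is rational. The only delicate point, and the only place where the hypothesis $c > 2$ actually enters, is ensuring that the constant $(1-\epsilon)(c-1) - 1$ is strictly positive; everything else is routine bookkeeping.
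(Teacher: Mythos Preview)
Your proof is correct and follows essentially the same approach as the paper's: both combine the tail estimate $|s-s_N|=|\alpha|^{-n_{N+1}+o(n_{N+1})}$, the height bound of Lemma~\ref{lem:height bound}(ii), and Lemma~\ref{lem:sum of indices bound}(i) with $c>2$ to force $|s-s_N|$ below $1/(qH(s_N))$, contradicting $s\neq s_N$ from \eqref{eq: the s and sk's}. The paper phrases the contradiction as ``$s=s_N$ for all large $N$'' whereas you invoke $s\neq s_N$ at the outset to get the lower bound, but this is only a cosmetic difference.
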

	\begin{proof}
	Suppose $s$ is rational. From: 
	\begin{itemize}
	\item $\vert s-s_N\vert=O(C_{N+1}/U_{N+1})=O(\vert\alpha\vert^{n_{N+1}+o(n_{N+1})})$, 
	\item $H(s_N)=\vert \alpha\vert^{n_1+\ldots+n_N+o(n_N)}$, 
	\item $n_1+\ldots+n_N<(c-1)n_{N+1}$, and 
	\item $c>2$,
	\end{itemize}
	we have that $s=s_N$ for all sufficiently large $N$, contradiction.
	\end{proof}
	
	\begin{proposition}\label{prop:can't be geq 5}
	 There are only finitely many $k$ such that $\displaystyle\frac{n_{k+1}}{n_k}\geq 5$.
	\end{proposition}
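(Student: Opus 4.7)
The plan is to argue by contradiction: supposing that $n_{k+1}/n_k\ge 5$ holds for infinitely many $k$, I will show that the partial sums $s_k$ yield rational approximations to $s$ that violate Roth's theorem (Theorem~\ref{thm:Roth}), which applies since $s$ is algebraic by assumption and irrational by Corollary~\ref{cor:s is irrational}.

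The first step is to sharpen the height bound along this subsequence. The hypothesis $n_{i+1}/n_i\ge c$ gives the geometric estimate
$$n_1+\cdots+n_k \;<\; n_k\sum_{j=0}^{\infty}c^{-j} \;=\; \frac{c}{c-1}\,n_k,$$
and since $c>2$ one has $c/(c-1)<2$, so whenever $n_{k+1}\ge 5 n_k$ we obtain
$$n_1+\cdots+n_k \;<\; \frac{2}{5}\,n_{k+1}.$$
Feeding this into Lemma~\ref{lem:height bound}(ii) yields $H(s_k)\le |\alpha|^{(2/5+o(1))\,n_{k+1}}$ along the subsequence.

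Next, by Lemma~\ref{lem:height bound}(i) (combined with $u_n\in\Q$ and $h(c_n)=o(n)$) one sees $|C_j/U_j|=|\alpha|^{-n_j+o(n_j)}$ at the archimedean place, so geometric summation of the tail produces $|s-s_k|=|\alpha|^{-(1+o(1))\,n_{k+1}}$. I then fix any $\kappa\in(2,5/2)$, say $\kappa=9/4$. For all sufficiently large $k$ in the subsequence,
$$|s-s_k| \;\le\; |\alpha|^{-(1+o(1))\,n_{k+1}} \;\le\; |\alpha|^{-\kappa\,(2/5+o(1))\,n_{k+1}} \;\le\; H(s_k)^{-\kappa}.$$
Since the rationals $s_k$ are pairwise distinct by~\eqref{eq: the s and sk's}, Roth's theorem forces only finitely many $k$ to satisfy this inequality, contradicting the assumption.

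The main obstacle is purely numerical tightness: the constant $5$ is forced by the requirement $2\kappa/c_0<1$ for some $\kappa>2$, where $c_0$ is the putative lower bound on $n_{k+1}/n_k$; thus any constant strictly greater than $4$ would work, and $5$ is chosen for a clean margin. All the delicate bookkeeping reduces to ensuring that the $o(n_{k+1})$ errors coming from both the height bound and the truncation error remain strictly smaller than the slack between $\kappa=9/4$ and the critical exponent $5/2$, which is automatic for $k$ large enough.
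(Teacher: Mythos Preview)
Your proof is correct and follows essentially the same approach as the paper: argue by contradiction, bound $H(s_k)$ via Lemma~\ref{lem:height bound}(ii) and the geometric estimate $n_1+\cdots+n_k<\frac{c}{c-1}n_k$, bound $|s-s_k|$ by the tail, and invoke Roth's theorem together with the distinctness of the $s_k$ from~\eqref{eq: the s and sk's}. The only cosmetic difference is that the paper keeps the sharper constant $\frac{c}{c-1}$ (so that, as noted in Remark~\ref{prop:can't be geq 5}+1, any threshold exceeding $\frac{2c}{c-1}$ would work), whereas you immediately relax it to $2$; since $5>4$ this makes no difference for the statement at hand.
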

	\begin{proof}
	Let $\epsilon>0$ that will be specified later. Suppose there are infinitely many $k$ such that $n_{k+1}/n_k\geq 5$. For each such $k$ that is sufficiently large,
	we have 
	$$\vert s-s_k\vert= O(C_{k+1}/U_{k+1})=O(\alpha^{-(1-\epsilon)n_{k+1}})=O(\alpha^{-5(1-\epsilon)n_k})$$
	 while 
	 $H(s_k)\leq \vert\alpha\vert^{n_1+\ldots+n_k+\epsilon n_k}$. Note
	 that 
	 $$n_1+\ldots+n_k<\left(1+\frac{1}{c-1}\right)n_k=\frac{c}{c-1}n_k.$$	 
	 We now require $\epsilon$ to satisfy:
	 \begin{equation}\label{eq:epsilon const 1}
	 5(1-\epsilon)n_k>\left(\frac{2c}{c-1}+3\epsilon\right)n_k;
	 \end{equation}
	this is possible since $5>\displaystyle\frac{2c}{c-1}$.
	Then Roth's theorem implies that the $s_k$'s take a single 
	value for infinitely many such $k$. But this contradicts
	\eqref{eq: the s and sk's}.
	\end{proof}
	
	\begin{remark}
	In Proposition~\ref{prop:can't be geq 5}, the same arguments can be used when we replace $5$ by
	any constant greater than $\displaystyle\frac{2c}{c-1}$. When $c>3$, we have $c>\displaystyle\frac{2c}{c-1}$ and this explains the transcendence of $\displaystyle\sum\frac{c_{n_k}}{u_{n_k}}$ given the ``easy'' bound $c>3$.
	\end{remark}

	Note that 
	$\alpha\beta=\pm 1$ since they are units. Then we can use the geometric series to express:
	\begin{align}\label{eq:Ck/Uk geometric series}
	\begin{split}
	\frac{C_k}{U_k}=\frac{C_k}{A_k\alpha^{n_k}(1-(B_k\beta^{n_k})/(A_k\alpha^{n_k}))}&=\sum_{j=0}^\infty\frac{C_k}{A_k\alpha^{n_k}}\left(\frac{B_k(\pm 1)^{n_k}}{A_k\alpha^{2n_k}}\right)^j\\
	&=\sum_{j=0}^\infty\frac{(\pm 1)^{n_kj}C_kB_k^j}{A_k^{j+1}\alpha^{(2j+1)n_k}}
	\end{split}
	\end{align}
	which is valid when $k$ is sufficiently large so that
	$\vert B_k/A_k\vert <\vert(\alpha/\beta)^{n_k}\vert=\vert\alpha\vert^{2n_k}.$
	
	\medskip
	
	Let $P$ be a large positive integers that will be specified later.
	In the following, $N$ denotes an arbitrarily large positive 
	integer. In the various $O$-notations and $o$-notations, the implied constants might depend on the given data and $P$ but they are independent of $N$.  We have:
	\begin{equation}\label{eq:s-s_(N-1) error}
	\vert s-s_{N-1}\vert=\vert\alpha\vert^{-n_N+o(n_N)}.
	\end{equation}
	As mentioned in the Section~\ref{sec:intro}, 
	it is typical in applications of
	the Subspace Theorem to break $s_{N-1}$ as
	$s_{N-P}$ and truncate the expression 
	\eqref{eq:Ck/Uk geometric series} for $k=N-P+1,\ldots,N-1$
	to maintain the error term 
	$\alpha^{-n_N+o(n_N)}$.
	
	\medskip
	
	For $1\leq i\leq P-1$, let
	$$D_{N,i}:=\left\lfloor \frac{n_N}{2n_{N-P+i}}\right\rfloor\leq \frac{5^{P-i}}{2}$$ 
	thanks to Proposition~\ref{prop:can't be geq 5}. The explicit upper bound here is not important: the key fact is that these $D_{N,i}$'s can be bounded from above independently of $N$.
	\begin{proposition}\label{prop:error when truncating}
	For all sufficiently large $N$, we have:
	\begin{equation*}
	\left\vert \frac{C_{N-P+i}}{U_{N-P+i}}-\sum_{j=0}^{D_{N,i}}\frac{(\pm 1)^{n_{N-P+i}j}C_{N-P+i}B_{N-P+i}^j}{A_{N-P+i}^{j+1}\alpha^{(2j+1)n_{N-P+i}}}\right\vert<\vert\alpha\vert^{-n_N+o(n_N)}
	\end{equation*}
	for $i=1,\ldots,P-1$. This means for every $\epsilon>0$, the LHS
	is less than $\vert\alpha\vert^{-n_N+\epsilon n_N}$
	for all sufficiently large $N$.
	\end{proposition}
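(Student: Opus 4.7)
The plan is to estimate the tail of the geometric series \eqref{eq:Ck/Uk geometric series} that is discarded when one truncates at $j=D_{N,i}$. Writing $k:=N-P+i$, $D:=D_{N,i}$, and $T$ for the quantity on the left-hand side of the claimed inequality, identity \eqref{eq:Ck/Uk geometric series} shows that
$$T=\left|\sum_{j=D+1}^{\infty}\frac{(\pm 1)^{n_k j}C_kB_k^j}{A_k^{j+1}\alpha^{(2j+1)n_k}}\right|=\left|\frac{C_k B_k^{D+1}}{A_k^{D+2}\alpha^{(2D+3)n_k}}\right|\cdot\left|\frac{1}{1-\rho_k}\right|,$$
where $\rho_k:=(\pm 1)^{n_k}B_k/(A_k\alpha^{2n_k})$ is the common ratio of the series. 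The first step is to show the correction factor $|1/(1-\rho_k)|$ is $O(1)$: the standing observation that $h(t_n)/n\to 0$ implies $|t_n|_v=e^{o(n)}$ gives $|A_k|,|B_k|=|\alpha|^{o(n_k)}$, so $|\rho_k|\leq|\alpha|^{-2n_k+o(n_k)}\to 0$.

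Next I would apply the same height-growth estimates to each of $A_k^{-(D+2)}$, $B_k^{D+1}$, and $C_k$. The crucial point is that $D=D_{N,i}\leq 5^{P-i}/2$ is bounded uniformly in $N$ by Proposition~\ref{prop:can't be geq 5}; consequently $|A_k|^{D+2}$ and $|B_k|^{D+1}$ remain $|\alpha|^{o(n_k)}$, and the bound reduces to $T\leq|\alpha|^{-(2D+3)n_k+o(n_k)}$.

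The decisive computation is then the exponent. From $D=\lfloor n_N/(2n_k)\rfloor\geq n_N/(2n_k)-1$ one gets $2Dn_k\geq n_N-2n_k$, whence $(2D+3)n_k\geq n_N+n_k$. Substituting gives $T\leq|\alpha|^{-n_N-n_k+o(n_k)}$. Since $k\to\infty$ with $N$ (for fixed $P$) and $n_k\leq n_N$, any $o(n_k)$ quantity is also $o(n_N)$, while the extra $-n_k$ in the exponent is a genuine saving; this yields the claimed bound $T\leq|\alpha|^{-n_N+o(n_N)}$.

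The argument is essentially direct bookkeeping rather than having a serious obstacle. The only subtle point worth flagging is that the \emph{uniform} boundedness of $D_{N,i}$ in $N$ is exactly what permits $|A_k|^{D+2}$ and $|B_k|^{D+1}$ to be absorbed into the $|\alpha|^{o(n_k)}$ factor---this is the role of Proposition~\ref{prop:can't be geq 5} as a preparatory step, and without it the argument could not close at this point.
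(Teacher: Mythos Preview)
Your argument is correct and follows essentially the same route as the paper: both proofs write the left-hand side as the tail of the geometric series \eqref{eq:Ck/Uk geometric series}, observe that the first omitted term has exponent $(2D_{N,i}+3)n_{N-P+i}\geq n_N$ by the definition of $D_{N,i}$, and use the uniform boundedness of $D_{N,i}$ (from Proposition~\ref{prop:can't be geq 5}) to absorb the coefficient factors $A_k^{-(D+2)}$, $B_k^{D+1}$, $C_k$ into $|\alpha|^{o(n_N)}$. Your version is slightly more explicit in isolating the geometric correction $1/(1-\rho_k)$ and in squeezing out the extra saving $(2D+3)n_k\geq n_N+n_k$, but these are cosmetic differences.
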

	\begin{proof}
	Let $\epsilon>0$. We have 
	\begin{align*}
	&\frac{C_{N-P+i}}{U_{N-P+i}}-\sum_{j=0}^{D_{N,i}}\frac{(\pm 1)^{n_{N-P+i}j}C_{N-P+i}B_{N-P+i}^j}{A_{N-P+i}^{j+1}\alpha^{(2j+1)n_{N-P+i}}}\\
	&=\sum_{j=D_{N,i}+1}^{\infty}\frac{(\pm 1)^{n_{N-P+i}j}C_{N-P+i}B_{N-P+i}^j}{A_{N-P+i}^{j+1}\alpha^{(2j+1)n_{N-P+i}}}.
	\end{align*}
	Hence it suffices to require the first term in the RHS:
	$$\frac{(\pm 1)^{n_{N-P+i}j}C_{N-P+i}B_{N-P+i}^j}{A_{N-P+i}^{j+1}\alpha^{(2j+1)n_{N-P+i}}}\ \text{with $j=D_{N,i}+1$}$$ 
	to be $O(\vert\alpha\vert^{-(1-\epsilon/2)n_N})$. This is actually
	the case, as follows. First, by the definition of $D_{N,i}$ we have
	$(2D_{N,i}+3)n_{N-P+i}\geq n_N$. Second
	$\displaystyle\left\vert\frac{C_{N-P+i}B_{N-P+i}^{D_{N,i}+1}}{A_{N-P+i}^{D_{N,i}+2}}\right\vert<\vert\alpha\vert^{(\epsilon/2) n_N}$
	when $N$ is sufficiently large since $D_{N,i}$
	is bounded above independently of $N$ and the assumption
	on the sequences $(A_k)$, $(B_k)$, and $(C_k)$. 
	\end{proof}
	
	At this point, one may attempt to apply the Subspace Theorem 
	using the inequality:
	\begin{align}\label{eq:not yet for Subspace Theorem}
	\left\vert s-s_{N-P}-\sum_{i=1}^{P-1}\sum_{j=0}^{D_{N,i}}\frac{(\pm 1)^{n_{N-P+i}j}C_{N-P+i}B_{N-P+i}^j}{A_{N-P+i}^{j+1}\alpha^{(2j+1)n_{N-P+i}}}\right\vert
	<\vert\alpha\vert^{-n_N+o(n_N)}
	\end{align}
	and linear forms in $2+\displaystyle\sum_{i=1}^{P-1}(D_{N,i}+1)$
	variables for the terms $s$, $s_{N-P}$, and those in the double sum in a similar manner to \cite[p.~180--181]{CZ04_OT} or
	\cite[Proposition~3.4]{KMN19_AA}. However, unlike these previous papers, the term $s_{N-P}$ in our situation is not an $S$-integer
	(for infinitely many $N$) 
	for any choice of a finite set $S\subset M_K$. Because of this,
	there is a potential contribution of
	$\displaystyle H(s_{N-P})^{2+\sum (D_{N,i}+1)}$ which could completely offset the error term $\vert\alpha\vert^{-n_N+o(n_N)}$.
	
	\medskip
	
	Our new idea is to consider an extra ``buffer zone'' 
	by specifying another positive integer $Q<P$, expressing
	\begin{equation}\label{eq:buffer zone}
	\frac{C_{N-P+1}}{U_{N-P+1}}+\ldots+\frac{C_{N-P+Q}}{U_{N-P+Q}}=\frac{x'_N}{x_N}
	\text{with $x_N=\displaystyle\prod_{i=1}^Q U_{N-P+i}$},
	\end{equation} 
	and rewriting \eqref{eq:not yet for Subspace Theorem} as
	\begin{align*}
	\begin{split}
	&\left\vert s-s_{N-P}-\frac{x'_N}{x_N}-\sum_{i=1}^{P-Q-1}\sum_{j=0}^{D_{N,Q+i}}\frac{(\pm 1)^{n_{N-P+Q+i}j}C_{N-P+Q+i}B_{N-P+Q+i}^j}{A_{N-P+Q+i}^{j+1}\alpha^{(2j+1)n_{N-P+Q+i}}}\right\vert\\
	&<\vert\alpha\vert^{-n_N+o(n_N)}.
	\end{split}
	\end{align*}
	We then multiply both sides by $x_N$ to get
	\begin{align}\label{eq:multiply by xN}
	\begin{split}
	&\left\vert x_Ns-x_Ns_{N-P}-x'_N-\sum_{i=1}^{P-Q-1}\sum_{j=0}^{D_{N,Q+i}}x_N\frac{(\pm 1)^{n_{N-P+Q+i}j}C_{N-P+Q+i}B_{N-P+Q+i}^j}{A_{N-P+Q+i}^{j+1}\alpha^{(2j+1)n_{N-P+Q+i}}}\right\vert\\
	&<\vert x_N\vert\vert\alpha\vert^{-n_N+o(n_N)}.
	\end{split}
	\end{align}
	After that we expand $x_N=\displaystyle\prod_{i=1}^Q U_{N-P+i}=\displaystyle\prod_{i=1}^Q(A_{N-P+i}\alpha^{n_{N-P+i}}-B_{N-P+i}\beta^{n_{N-P+i}})$ as a linear
	combination of $2^Q$ terms, expand $x'_N$ as a linear combination of
	$Q2^{Q-1}$ terms. Note that each $x_Ns$, $x_Ns_{N-P}$,
	as well as each individual term in the double sum
	 now consists of $2^Q$ many terms.
	In a typical application of the Subspace Theorem, one is
	worse off after performing the above steps. Therefore it
	is amusing that in our current situation, those steps can help
	reduce the number of terms significantly while the new error
	$\vert x_N\vert\vert\alpha\vert^{-n_N+o(n_N)}$
	is not too much larger than
	the previous 
	$\vert\alpha\vert^{-n_N+o(n_N)}$.
	
	\medskip
	
	Now even if we can apply the Subspace Theorem, there remains one
	important technical issue to overcome. 
	After expanding $x'_N$ and $x_N$,
	it might happen that certain terms in the 
	LHS of \eqref{eq:multiply by xN} already satisfied a linear 
	relation and the conclusion of the Subspace Theorem trivially
	illustrates this fact. For instance, in the double sum in the LHS
	of \eqref{eq:not yet for Subspace Theorem}, if there are two 
	different $(i_1,j_1)$ and $(i_2,j_2)$ for which 
	$(2j_1+1)n_{N-P+i_1}$ and $(2j_2+1)n_{N-P+i_2}$ are close 
	(or even equal)
	to each other then one should ``gather'' 
	the two terms corresponding to $(i_1,j_1)$ and $(i_2,j_2)$ first. 
	So we will also need a way to efficiently ``gather similar terms'' 
	so
	that the conclusion of the Subspace Theorem becomes helpful for our 
	purpose. First, we expand $x_N$ and $x'_N$:
	\begin{lemma}\label{lem:expression for xN and x'N}
	\begin{itemize}
	\item [(i)] There exists $\delta_2>0$ (possibly depending on $P$ and $Q$) such that for 
	all sufficiently large $N$, we can express:
	$$x_N=\sum_{i=1}^{2^Q}x_{N,i}\alpha^{x(N,i)}$$ 
	with the following properties:
	\begin{itemize}
		\item [(a)] $x_{N,i}\in \Q(\alpha)^*$ and $x(N,i)\in\Z$ for every $i$.
		\item [(b)] $x(N,1)=\displaystyle\sum_{i=1}^Q n_{N-P+i}$ and $x(N,1)\geq x(N,j)+2n_{N-P+1}$ for every $j>1$.
		\item [(c)] $\vert x(N,i)\vert\leq x(N,1)$ for every $i$.
		\item [(d)] $h(x_{N,i})/n_N\to 0$ as $N\to\infty$ for every $i$.
		\item [(e)] $\vert x(N,i)-x(N,j)\vert\geq \delta_2 n_N$ for any 
		$1\leq i\neq j\leq 2^Q$.
	\end{itemize}
	\item [(ii)] For all sufficiently large $N$, we can express
	$$x'_N=\sum_{i=1}^{Q2^{Q-1}}x'_{N,i}\alpha^{x'(N,i)}$$
	with the following properties:
	\begin{itemize}
		\item [(a)] $x'_{N,i}\in\Q(\alpha)^*$ and $x'(N,i)\in\Z$ for every $i$.
		\item [(b)] $\vert x'(N,i)\vert\leq n_{N-P+2}+\ldots+n_{N-P+Q}$
		for every $i$.
		\item [(c)] $h(x'_{N,i})/n_N\to 0$ as $N\to\infty$.
	\end{itemize}
	\end{itemize}
	\end{lemma}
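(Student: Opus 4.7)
The starting point is the identity $\beta = \pm \alpha^{-1}$ coming from $\alpha\beta = \pm 1$, which lets one rewrite each factor as
$$U_{N-P+i} = A_{N-P+i}\,\alpha^{n_{N-P+i}} - (\pm 1)^{n_{N-P+i}}\, B_{N-P+i}\,\alpha^{-n_{N-P+i}}.$$
Expanding $x_N = \prod_{i=1}^{Q} U_{N-P+i}$ as a product of $Q$ binomials then yields $2^Q$ monomials in $\alpha$, indexed by sign vectors $\epsilon = (\epsilon_1,\ldots,\epsilon_Q)\in\{\pm 1\}^Q$: the $\epsilon$-term carries exponent $x(N,\epsilon)=\sum_\ell \epsilon_\ell\, n_{N-P+\ell}$ and a coefficient in $\Q(\alpha)^*$ that is a product of $Q$ factors drawn from $\{A_{N-P+\ell},\, -(\pm 1)^{n_{N-P+\ell}} B_{N-P+\ell}\}$. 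For $x'_N$, I would write $x'_N = \sum_{k=1}^{Q} C_{N-P+k}\prod_{i\neq k} U_{N-P+i}$ and expand each inner product in the same way, producing $Q\cdot 2^{Q-1}$ monomials of the identical shape.

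Properties (i)(a)--(c) and (ii)(a),(b) then read off directly from this expansion. The maximal exponent $x(N,1) = n_{N-P+1}+\cdots+n_{N-P+Q}$ corresponds to $\epsilon\equiv +1$, and any other sign choice flips at least one sign, lowering the exponent by at least $2n_{N-P+1}$; the triangle inequality bounds $|x(N,\epsilon)|$ by $x(N,1)$; and for $x'_N$, each exponent has absolute value at most $\sum_{i\neq k} n_{N-P+i}$, which is maximised when $k=1$, giving the bound $n_{N-P+2}+\cdots+n_{N-P+Q}$. The height bounds (i)(d) and (ii)(c) hold because each coefficient is a product of a bounded number (depending only on $P,Q$) of factors from $(a_n),(b_n),(c_n)$, so the subadditivity of $h$ combined with the sublinear-growth hypotheses $h(a_n)/n,\, h(b_n)/n,\, h(c_n)/n \to 0$ forces $h(x_{N,i})/n_N \to 0$ and $h(x'_{N,i})/n_N \to 0$.

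The main obstacle is (i)(e), the uniform separation of exponents. For distinct sign patterns $\epsilon\neq\epsilon'$,
$$x(N,\epsilon) - x(N,\epsilon') = 2\sum_{\ell=1}^{Q} \eta_\ell\, n_{N-P+\ell}, \qquad \eta_\ell := \frac{\epsilon_\ell - \epsilon'_\ell}{2} \in \{-1,0,+1\},$$
with some $\eta_\ell\neq 0$. Letting $k$ be the largest index with $\eta_k\neq 0$, I would repeat the geometric-series estimate of Lemma~\ref{lem:sum of indices bound}(i) applied to the subsequence $n_{N-P+1}<n_{N-P+2}<\cdots$ (which still satisfies ratio $\geq c$) to obtain $n_{N-P+1}+\cdots+n_{N-P+k-1} \leq n_{N-P+k}/(c-1)$; hence
$$|x(N,\epsilon) - x(N,\epsilon')| \;\geq\; \frac{2(c-2)}{c-1}\, n_{N-P+k} \;\geq\; \frac{2(c-2)}{c-1}\, n_{N-P+1}.$$
Finally, Proposition~\ref{prop:can't be geq 5} gives $n_N < 5^{P-1} n_{N-P+1}$ for all sufficiently large $N$, so one may take $\delta_2 := 2(c-2)/\bigl((c-1)\cdot 5^{P-1}\bigr)$. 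The decisive use of the hypothesis $c>2$ of Theorem~\ref{thm:general} enters precisely through the positivity of the factor $c-2$.
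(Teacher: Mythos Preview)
Your proof is correct and follows essentially the same approach as the paper: the paper parametrises the $2^Q$ terms by subsets $T\subseteq\{1,\ldots,Q\}$ (with exponent $\Sigma(N,T)=\sum_{i\in T}n_{N-P+i}-\sum_{i\notin T}n_{N-P+i}$) rather than sign vectors, but this is only a notational difference, and the separation argument for (i)(e)---take the largest index where the two patterns differ, apply the geometric-series bound from Lemma~\ref{lem:sum of indices bound}, then convert to a fraction of $n_N$ via Proposition~\ref{prop:can't be geq 5}---is identical, yielding the same $\delta_2=\dfrac{2(c-2)}{5^{P-1}(c-1)}$.
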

	\begin{proof}
	For part (i), let $\scrQ=\{1,\ldots,Q\}$. For each $T\subseteq \scrQ$, put
	$$\Sigma(N,T)=\sum_{i\in T}n_{N-P+i}-\sum_{i\in \scrQ\setminus T}n_{N-P+i}.$$
   Note that $\vert \Sigma(N,T)\vert \leq \displaystyle\sum_{i=1}^Q n_{N-P+i}<2n_{N-P+Q}$
   where the last inequality follows from 
   Lemma~\ref{lem:sum of indices bound}. 
    We have: 
    \begin{align*}
    x_{N}&=\prod_{j=1}^Q (A_{N-P+j}\alpha^{n_{N-P+j}}-B_{N-P+j}\beta^{n_{N-P+j}})\\
    &=\prod_{j=1}^Q(A_{N-P+j}\alpha^{n_{N-P+j}}-B_{N-P+j}(\pm 1)^{n_{N-P+j}}\alpha^{-n_{N-P+j}}).
    \end{align*}
    We fix once and for all a 1-1 correspondence between $\{1,\ldots, 2^Q\}$ and the set of subsets of $\scrQ$ so that $1$ corresponds to
    $\scrQ$.  This allows us to take
    the
     $x(N,i)$'s to be exactly the $\Sigma(N,T)$'s (with
     $x(N,1)=\Sigma(N,\scrQ)$) and the $x_{N,i}$'s are 
     the corresponding products of terms among
     the $A_{N-P+j}$ and $(-1)^{n_{N-P+j}}B_{N-P+j}$; this proves parts 
     (a) and (d).
     The largest among the $\Sigma(N,T)$'s is $\displaystyle\sum_{i=1}^Q n_{N-P+i}$ while the smallest is $-\displaystyle\sum_{i=1}^Q n_{N-P+i}$; this proves part (c). Moreover, the second largest is
     $$-n_{N-P+1}+n_{N-P+2}+\ldots+n_{N-P+Q}$$
     and this proves part (b).
      It remains to prove part (e).
     
     Consider two different subsets $T$ and $T'$ of $\scrQ$. Let $j^*$ be the largest element in
     $T\Delta T'$, then we have:
     \begin{align*}
     \vert \Sigma(N,T)-\Sigma(N,T')\vert&\geq 2n_{N-P+j^*}-\sum_{j<j^*}2n_{N-P+j}\\
     &\geq \frac{2(c-2)}{c-1}n_{N-P+j^*}\\
     &\geq \frac{2(c-2)}{(c-1)5^{P-j^*}}n_N
     \end{align*} 
	where the last two inequalities follow from Lemma~\ref{lem:sum of indices bound} and Proposition~\ref{prop:can't be geq 5}. We can now take
	$\delta_2=\displaystyle\frac{2(c-2)}{5^{P-1}(c-1)}$.
	
	The proof of part (ii) is similar by expanding:
	\begin{align*}
	x'_N&=\sum_{i=1}^Q C_{N-P+i}\prod_{1\leq j\leq Q,\ j\neq i} U_{N-P+j}\\
	&=\sum_{i=1}^Q C_{N-P+i}\prod_{1\leq j\leq Q,\ j\neq i}(A_{N-P+j}\alpha^{n_{N-P+j}}-B_{N-P+j}(\pm\alpha)^{-n_{N-P+j}})
	\end{align*}
	into $Q2^{Q-1}$ many terms.
	\end{proof}
	
	Then we expand each individual term in the double sum
	$$\displaystyle\sum_{i=1}^{P-Q-1}\sum_{j=0}^{D_{N,Q+i}}x_N\frac{(\pm 1)^{n_{N-P+Q+i}j}C_{N-P+Q+i}B_{N-P+Q+i}^j}{A_{N-P+Q+i}^{j+1}\alpha^{(2j+1)n_{N-P+Q+i}}}$$
	to get:
	\begin{lemma}\label{lem:expanding terms in the double sum}
	Put $\eta=2^Q\displaystyle\sum_{i=1}^{P-Q-1}(D_{N,Q+i}+1)$.
	For all sufficiently large $N$, we can express:
	$$\displaystyle\sum_{i=1}^{P-Q-1}\sum_{j=0}^{D_{N,Q+i}}x_N\frac{(\pm 1)^{n_{N-P+Q+i}j}C_{N-P+Q+i}B_{N-P+Q+i}^j}{A_{N-P+Q+i}^{j+1}\alpha^{(2j+1)n_{N-P+Q+i}}}=\sum_{i=1}^\eta y_{N,i}\alpha^{y(N,i)}$$
	with the following properties:
	\begin{itemize}
		\item [(a)] $y_{N,i}\in \Q(\alpha)^*$ and $y(N,i)\in\Z$
		for every $i$.
		
		\item [(b)] $h(y_{N,i})/n_N\to 0$ as $N\to\infty$.
		
		\item [(c)] $y(N,i)\leq x(N,1)-n_{N-P+Q+1}=n_{N-P+1}+\ldots+n_{N-P+Q}-n_{N-P+Q+1}$ for every $i$.
		
		\item [(d)] $y(N,i)>-3n_N$ for every $i$.
	\end{itemize}
	\end{lemma}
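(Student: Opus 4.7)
The plan is to substitute the expression $x_N=\sum_{k=1}^{2^Q}x_{N,k}\alpha^{x(N,k)}$ from Lemma~\ref{lem:expression for xN and x'N}(i) into each individual term of the double sum and then relabel. For each pair $(i,j)$ with $1\leq i\leq P-Q-1$ and $0\leq j\leq D_{N,Q+i}$, multiplying by $x_N$ breaks the corresponding summand into exactly $2^Q$ terms of the shape
$$x_{N,k}\cdot\frac{(\pm 1)^{n_{N-P+Q+i}j}C_{N-P+Q+i}B_{N-P+Q+i}^j}{A_{N-P+Q+i}^{j+1}}\cdot\alpha^{x(N,k)-(2j+1)n_{N-P+Q+i}},$$
so the total count is $2^Q\sum_{i=1}^{P-Q-1}(D_{N,Q+i}+1)=\eta$. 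I would take $y_{N,\ell}$ to be the $\Q(\alpha)$-coefficient and $y(N,\ell):=x(N,k)-(2j+1)n_{N-P+Q+i}$ the exponent, as $\ell$ ranges over the resulting index set.

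Property (a) is immediate from the construction together with Lemma~\ref{lem:expression for xN and x'N}(i)(a) and the standing assumption (established in Section~\ref{sec:prelim}) that $A_kB_k\neq 0$ and $C_k\neq 0$. For (b), I would use the estimate
$$h(y_{N,\ell})\leq h(x_{N,k})+h(C_{N-P+Q+i})+j\,h(B_{N-P+Q+i})+(j+1)\,h(A_{N-P+Q+i})+O(1),$$
the $O(1)$ absorbing the sign. The first summand is $o(n_N)$ by Lemma~\ref{lem:expression for xN and x'N}(i)(d); the remaining summands are $o(n_N)$ by the hypothesis $h(a_n)/n,h(b_n)/n,h(c_n)/n\to 0$, crucially because $j\leq D_{N,Q+i}$ is bounded above by a constant depending only on $P$ thanks to Proposition~\ref{prop:can't be geq 5}.

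Property (c) then follows from $x(N,k)\leq x(N,1)$ (Lemma~\ref{lem:expression for xN and x'N}(i)(c)) together with the trivial observation $(2j+1)n_{N-P+Q+i}\geq n_{N-P+Q+i}\geq n_{N-P+Q+1}$, valid because $i\geq 1$ and the $n_k$ are strictly increasing. For (d), the main piece of bookkeeping, I would combine $x(N,k)\geq -x(N,1)$ (same lemma) with the bound $x(N,1)=\sum_{i=1}^{Q}n_{N-P+i}<n_{N-P+Q+1}/(c-1)<n_N$ obtained from Lemma~\ref{lem:sum of indices bound}(i) and $c>2$, and use the defining inequality $2D_{N,Q+i}\leq n_N/n_{N-P+Q+i}$ to control $(2j+1)n_{N-P+Q+i}\leq n_N+n_{N-P+Q+i}<2n_N$; together these yield $y(N,\ell)>-3n_N$. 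The entire lemma is essentially a careful bookkeeping exercise with no single conceptually hard step; the mildest subtlety is verifying the lower bound $-3n_N$ uniformly in $(i,j,k)$, which is comfortably produced by the two estimates above and is exactly the shape one needs later when $|x_N|\vert\alpha\vert^{-n_N+o(n_N)}$ is compared against the sizes of the terms $y_{N,\ell}\alpha^{y(N,\ell)}$ entering the Subspace Theorem.
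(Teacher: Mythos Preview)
Your proposal is correct and follows essentially the same approach as the paper: expand $x_N$ via Lemma~\ref{lem:expression for xN and x'N}(i), collect the resulting $\eta$ terms, and verify (a)--(d) by bounding the exponents $x(N,k)-(2j+1)n_{N-P+Q+i}$ from above (using $x(N,k)\leq x(N,1)$ and $(2j+1)n_{N-P+Q+i}\geq n_{N-P+Q+1}$) and from below (using $x(N,k)\geq -x(N,1)>-n_N$ and $(2j+1)n_{N-P+Q+i}<2n_N$). Your write-up is in fact slightly more explicit than the paper's, particularly for (b) and (d), but the arguments coincide.
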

	\begin{proof}
	We use the expression for $x_N$ in Lemma~\ref{lem:expression for xN and x'N} to expand each individual term
	in the double sum. This proves (a) and (b). The highest exponent of $\alpha$ in that expression for $x_N$ is $x(N,1)$ while the highest exponent of
	$\alpha$ among the 
	$$\frac{(\pm 1)^{n_{N-P+Q+i}j}C_{N-P+Q+i}B_{N-P+Q+i}^j}{A_{N-P+Q+i}^{j+1}\alpha^{(2j+1)n_{N-P+Q+i}}}$$
	for $1\leq i\leq P-Q-1$ and $0\leq j\leq D_{N,Q+i}$
	is at most $-n_{N-P+Q+1}$, this proves (c). The smallest exponent of $\alpha$ among those terms is
	$$-\max\{(2j+1)n_{N-P+Q+i}:\ 1\leq i\leq P-Q-1,\ 0\leq j\leq D_{N,Q+i}\}>-2n_N$$
	by using the definition of the $D_{N,Q+i}$'s.
	The smallest exponent of  $\alpha$ in $x_N$
	is $-n_{N-P+1}-\ldots-n_{N-P+Q}>-n_N$. This proves (d).
	\end{proof}
	
	We also need the following:
	\begin{lemma}\label{lem: upper bound for xN}
	For all sufficiently large $N$:
	\begin{itemize}
		\item [(i)] $\displaystyle\vert x_N\vert=\vert\alpha\vert^{n_{N-P+1}+\ldots+n_{N-P+Q}+o(n_N)}$, 
		
		\item [(ii)] $\displaystyle\vert x_N(s-s_{N-1})\vert=\left\vert x_N\sum_{k=N}^{\infty}\frac{C_k}{U_k}\right\vert=\vert\alpha\vert^{n_{N-P+1}+\ldots+n_{N-P+Q}-n_N+o(n_N)},$
		
		\item [(iii)] $\displaystyle\vert x_N\vert<\vert\alpha\vert^{\frac{n_N}{c^{P-Q-1}(c-1)}+o(n_N)}$, and 
		
		\item [(iv)] $\vert x_N(s-s_{N-1})\vert<\vert\alpha\vert^{-(1-\frac{1}{c^{P-Q-1}(c-1)})n_N+o(n_N)}.$
	\end{itemize}
	\end{lemma}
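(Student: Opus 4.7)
My plan is to read off (i)--(iv) by combining the archimedean size estimates already established for $U_k$ and $C_k/U_k$ with the summability estimate in Lemma~\ref{lem:sum of indices bound}. No clever idea is needed; the only point requiring a little care is bookkeeping of the $o(n_N)$ terms, since $x_N$ is a product of $Q$ factors indexed by $N-P+1,\ldots,N-P+Q$.

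For \textbf{(i)}, recall that $U_k=A_k\alpha^{n_k}-B_k\beta^{n_k}$ with $h(A_k)/k,h(B_k)/k\to 0$, so $|A_k|,|B_k|=|\alpha|^{o(n_k)}$ under the fixed archimedean embedding (this is exactly the estimate used in the proof of Lemma~\ref{lem:height bound}(i)). Since $|\beta|<1<|\alpha|$ the term $A_k\alpha^{n_k}$ dominates and $|U_k|=|\alpha|^{n_k+o(n_k)}$. Taking the product over $i=1,\ldots,Q$, and noting that $Q$ is fixed and each $n_{N-P+i}\le n_N$ (so $o(n_{N-P+i})=o(n_N)$ after summing over the fixed-length range), gives
\[
|x_N|=\prod_{i=1}^{Q}|U_{N-P+i}|=|\alpha|^{n_{N-P+1}+\ldots+n_{N-P+Q}+o(n_N)}.
\]

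For \textbf{(ii)}, split $s-s_{N-1}=\sum_{k=N}^\infty C_k/U_k$ as leading term plus tail. The leading term has $|C_N/U_N|=|\alpha|^{-n_N+o(n_N)}$ by (i) together with $|C_N|=|\alpha|^{o(n_N)}$. The tail is bounded in absolute value by a constant multiple of $|\alpha|^{-n_{N+1}+o(n_{N+1})}\le|\alpha|^{-c\,n_N+o(n_N)}$, which is negligible compared to the leading term since $c>2>1$. Therefore $|s-s_{N-1}|=|\alpha|^{-n_N+o(n_N)}$, and multiplying by the estimate for $|x_N|$ from (i) yields (ii).

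For \textbf{(iii)}, I would apply Lemma~\ref{lem:sum of indices bound}(ii) with $m=N-P+Q$: it gives
\[
n_{N-P+1}+\ldots+n_{N-P+Q}\;\le\;n_1+\ldots+n_{N-P+Q}\;<\;\frac{n_N}{c^{\,P-Q-1}(c-1)},
\]
and substituting this upper bound into the exponent from (i) yields (iii). Part \textbf{(iv)} is then immediate by multiplying the bound in (iii) by $|s-s_{N-1}|=|\alpha|^{-n_N+o(n_N)}$, yielding the exponent $-\bigl(1-\tfrac{1}{c^{P-Q-1}(c-1)}\bigr)n_N+o(n_N)$. The only ``obstacle'' worth flagging is the routine check that the finitely many $o(n_{N-P+i})$ error terms, produced by the uniform archimedean size estimates for $A_k,B_k,C_k$, all aggregate into a single $o(n_N)$; this is immediate because $Q$ (and later $P$) are fixed, independent of $N$.
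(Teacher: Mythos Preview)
Your proof is correct and follows essentially the same route as the paper: both obtain (i) from $|U_k|=|\alpha|^{n_k+o(n_k)}$ and take the product over the fixed range, derive (ii) from $|s-s_{N-1}|\asymp |C_N/U_N|=|\alpha|^{-n_N+o(n_N)}$, and deduce (iii)--(iv) from Lemma~\ref{lem:sum of indices bound}(ii) applied with $m=N-P+Q$. One tiny slip: you wrote $h(A_k)/k\to 0$ where you mean $h(A_k)/n_k\to 0$, but your subsequent estimate $|A_k|=|\alpha|^{o(n_k)}$ is the correct conclusion and the argument is unaffected.
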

	\begin{proof}
	We have $\vert x_N\vert\gg\ll \vert A_{N-P+1}\cdots A_{N-P+Q}\vert\vert\alpha\vert^{n_{N-P+1}+\ldots+n_{N-P+Q}}$ and since
	the $A_k$'s are nonzero with height $o(n_k)$ we have
	$\vert A_{N-P+1}\cdots A_{N-P+Q}\vert=\vert\alpha\vert^{o(n_N)}$. 
	This proves part (i). 
	Then Lemma~\ref{lem:sum of indices bound} gives:
	\begin{equation}\label{eq:n_N/c^(P-Q-1)(c-1)}
	n_{N-P+1}+\ldots+n_{N-P+Q}< \frac{n_N}{c^{P-Q-1}(c-1)}.
	\end{equation}
	This proves part (iii). 
	
	For part (ii), we use part (i) together with:
	$$\vert s-s_{N-1}\vert\gg\ll \vert C_N/U_N\vert = \vert\alpha\vert^{-n_N+o(n_N)}.$$
	Finally part (iv) follows from part (ii) and \eqref{eq:n_N/c^(P-Q-1)(c-1)}.
	\end{proof}
	
	\section{The number $s$ is in $\Q(\alpha)$}\label{sec:s is in Qalpha}
	We continue with the assumption and notation of Section~\ref{sec:prelim}, in particular $s$ is algebraic. Throughout this section, let $Q<P$ be large, yet fixed, positive integers that will be specified later
	and let $N$ denote an arbitrarily large positive 
	integer. In the various $O$-notations and $o$-notations, the implied constants might depend on the given data, $P$, and $Q$ but they are independent of $N$. In this section,
	we finish an important step toward the proof of 
	Theorem~\ref{thm:general} namely proving that $s\in \Q(\alpha)$. 
	This conclusion is similar to the one in the paper of Adamczewski-Bugeaud \cite[Theorem~5]{AB07_OT}. In their paper, to obtain transcendence of the given number \cite[Theorem~5A]{AB07_OT}, they use a result of K.~Schmidt \cite{Sch80_OP}. In this paper, we will need
	more sophisticated applications of the Subspace Theorem together with further 
	combinatorial
	and Galois theoretic arguments in the next section to obtain the desired result.
	
	\medskip
	
	As mentioned in the previous section, before applying the Subspace Theorem, we need to come up with an efficient way to ``gather similar terms'' in the LHS of \eqref{eq:multiply by xN}. This is done first by proving the existence of a certain collection of data then choosing
	a minimal one among those collections.
	\begin{proposition}\label{prop:exist D, E and F}
	Recall the $x_{N,1}$ and $x(N,1)=\displaystyle\sum_{i=1}^Q n_{N-P+i}$ in
	the expression for $x_N$ in 
	Lemma~\ref{lem:expression for xN and x'N}. 
	There exist integers $D,E,F\geq 0$, 
	tuples $(\gamma_1,\ldots,\gamma_E)$ of elements of
	$K$, 
	an infinite 
	set $\scrN$, tuples
	$(d_{N,1},\ldots,d_{N,D})$, $(d(N,1),\ldots,d(N,D))$,
	$(e_{N,1},\ldots,e_{N,E})$, $(e(N,1),\ldots,e(N,E))$,
	$(f_{N,1},\ldots,f_{N,F})$, $(f(N,1),\ldots,f(N,F))$
	for each $N\in\scrN$ with the following properties:
	\begin{itemize}
	\item [(i)] $D+E+F\leq (2^Q-1)+(2^Q-1)+Q2^{Q-1}+2^Q\displaystyle\sum_{i=1}^{P-Q-1}(D_{N,Q+i}+1)$.
	
	\item [(ii)] For every $N\in\scrN$, the $d(N,i)$'s, $e(N,j)$'s, and   
	$f(N,\ell)$'s are integers
	for every $i,j,\ell$.

	\item [(iii)] For every $N\in\scrN$, 
	$n_{N-P+1}+\displaystyle\max_{i,j,\ell}\{d(N,i),e(N,j),f(N,\ell)\}\leq x(N,1)$.
	
	\item [(iv)] For every $N\in\scrN$, $\min_{i,j,k}\{d(N,i),e(N,j),f(N,\ell)\}>-3n_N$.
	
	\item [(v)] For every $N\in\scrN$, the $d_{N,i}$'s, $e_{N,j}$'s,
	and $f_{N,\ell}$'s are elements of $\Q(\alpha)$.
	
	\item [(vi)] As $N\to\infty$ we have 
	$h(d_{N,i})/n_N\to 0$, $h(e_{N,j})/n_N\to 0$,
	and $h(f_{N,\ell})/n_N\to 0$ for every $i,j,\ell$.
	
	\item [(vii)] For all sufficiently large $N\in\scrN$, we have 
	\begin{align}\label{eq:exist DEF, last property}
	\begin{split}
	&\vert sx_{N,1}\alpha^{x(N,1)}+\sum_{j=1}^E \gamma_je_{N,j}\alpha^{e(N,j)}-s_{N-P}x_{N,1}\alpha^{x(N,1)}-
	\sum_{i=1}^D s_{N-P}d_{N,i}\alpha^{d(N,i)}\\
	&-\sum_{\ell=1}^F f_{N,\ell}\alpha^{f(n,\ell)}\vert
	<\vert\alpha\vert^{-(1-\frac{1}{c^{P-Q-1}(c-1)})n_N+o(n_N)}.
	\end{split}
	\end{align}
	\end{itemize} 
	\end{proposition}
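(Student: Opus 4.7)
The plan is to combine the expansions from Lemmas~\ref{lem:expression for xN and x'N} and~\ref{lem:expanding terms in the double sum} with the multiplied-out inequality~\eqref{eq:multiply by xN}, isolate the two dominant summands at exponent $x(N,1)$, and distribute the remainder into three classes: those carrying a factor $s$, those carrying $s_{N-P}$, and those carrying neither. Once this bookkeeping is done, the desired inequality~\eqref{eq:exist DEF, last property} is merely~\eqref{eq:multiply by xN} rewritten, with its RHS bounded by Lemma~\ref{lem: upper bound for xN}(iv).

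Concretely, expanding $x_N$ via Lemma~\ref{lem:expression for xN and x'N}(i) yields $x_N s = \sum_{i=1}^{2^Q} s\,x_{N,i}\alpha^{x(N,i)}$ and $x_N s_{N-P} = \sum_{i=1}^{2^Q} s_{N-P}\,x_{N,i}\alpha^{x(N,i)}$. By Lemma~\ref{lem:expression for xN and x'N}(i)(b) the $i=1$ summand sits alone at the highest exponent $x(N,1) = \sum_{i=1}^Q n_{N-P+i}$; these two summands become the main terms $sx_{N,1}\alpha^{x(N,1)}$ and $-s_{N-P}x_{N,1}\alpha^{x(N,1)}$. The remaining $2^Q-1$ $s$-summands furnish the $\gamma_j e_{N,j}\alpha^{e(N,j)}$ with every $\gamma_j = s$ (so $E \leq 2^Q-1$), while the remaining $2^Q-1$ $s_{N-P}$-summands furnish the $-s_{N-P}d_{N,i}\alpha^{d(N,i)}$ (so $D \leq 2^Q-1$). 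The $Q2^{Q-1}$-term expansion of $x'_N$ from Lemma~\ref{lem:expression for xN and x'N}(ii), together with the $\eta = 2^Q\sum_{i=1}^{P-Q-1}(D_{N,Q+i}+1)$-term expansion of the double sum from Lemma~\ref{lem:expanding terms in the double sum}, supply the $f_{N,\ell}\alpha^{f(N,\ell)}$ summands, yielding exactly the count in~(i).

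The exponent conditions~(iii) and~(iv) are then verified term by term. For~(iii): if $i \geq 2$ then $x(N,i) \leq x(N,1) - 2n_{N-P+1}$ by Lemma~\ref{lem:expression for xN and x'N}(i)(b); for $x'(N,i)$, Lemma~\ref{lem:expression for xN and x'N}(ii)(b) gives $x'(N,i) \leq n_{N-P+2}+\cdots+n_{N-P+Q} = x(N,1) - n_{N-P+1}$; and for $y(N,i)$, Lemma~\ref{lem:expanding terms in the double sum}(c) gives $y(N,i) \leq x(N,1) - n_{N-P+Q+1} \leq x(N,1) - n_{N-P+1}$. For~(iv): $x(N,i) \geq -x(N,1) \geq -n_N/(c-1) > -3n_N$ via Lemma~\ref{lem:sum of indices bound}(i), the same bound handles $x'(N,i)$, and $y(N,i) > -3n_N$ is exactly Lemma~\ref{lem:expanding terms in the double sum}(d). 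Properties~(ii), (v), (vi) are inherited directly from the cited lemmas, since all building blocks lie in $\Q(\alpha)$ and only finitely many of them (with $P$, $Q$ fixed) are combined.

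The one wrinkle is that the natural value of $F$ depends on $N$ through the $D_{N,Q+i}$, whereas the statement demands $D, E, F$ be fixed integers; this is precisely what the infinite set $\scrN$ is for. Proposition~\ref{prop:can't be geq 5} bounds each $D_{N,Q+i}$ uniformly, so the tuple $(D_{N,Q+1},\ldots,D_{N,Q+P-Q-1})$ ranges over a finite set, and pigeonhole extracts an infinite $\scrN$ on which it is constant; on $\scrN$ one then takes $D$, $E$, $F$ to be the corresponding fixed integers. I do not foresee any substantive obstacle in this proposition itself: the content is essentially organizational, and the real difficulty is deferred to the next step, where one must choose a genuinely \emph{minimal} such expression before invoking the Subspace Theorem to extract a useful algebraic relation.
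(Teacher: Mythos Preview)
Your proposal is correct and takes essentially the same route as the paper: isolate the $i=1$ summands from $sx_N$ and $s_{N-P}x_N$, set every $\gamma_j = s$, and collect the expansions of $x'_N$ and the double sum into the $f$-block, then read off (i)--(vii) from Lemmas~\ref{lem:expression for xN and x'N}, \ref{lem:expanding terms in the double sum}, and~\ref{lem: upper bound for xN}. Your pigeonhole step to stabilize the tuple $(D_{N,Q+1},\ldots,D_{N,P-1})$ (and hence $F$) is a careful addition that the paper glosses over by simply taking $\scrN$ to be all sufficiently large $N$ and writing $F = Q2^{Q-1}+\eta$.
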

	\begin{proof}
	Recall the inequality \eqref{eq:multiply by xN}:
	\begin{align*}
	&\left\vert sx_N-s_{N-P}x_N-x'_N-\sum_{i=1}^{P-Q-1}\sum_{j=0}^{D_{N,Q+i}}x_N\frac{(\pm 1)^{n_{N-P+Q+i}j}C_{N-P+Q+i}B_{N-P+Q+i}^j}{A_{N-P+Q+i}^{j+1}\alpha^{(2j+1)n_{N-P+Q+i}}}\right\vert\\
	&<\vert x_N\vert\vert\alpha\vert^{-n_N+o(n_N)})<\vert\alpha\vert^{-(1-\frac{1}{c^{P-Q-1}(c-1)})n_N+o(n_N)}
	\end{align*}
	where the last equality follows from Lemma~\ref{lem: upper bound for xN} and holds when $N$ is sufficiently large. We now choose $\scrN$ to be the set of all sufficiently large
	integers, $D=E=2^Q-1$, and $\gamma_1=\ldots=\gamma_E=s$. 
	We want the sum
	$$sx_{N,1}\alpha^{x(N,1)}+\sum_{j=1}^E \gamma_je_{N,j}\alpha^{e(N,j)}=s\left(x_{N,1}\alpha^{x(N,1)}+\sum_{j=1}^E e_{N,j}\alpha^{e(N,j)}\right)$$ 
	to be $sx_N$; therefore we simply
	choose the $e_{N,j}$'s and $e(N,j)$'s for $1\leq j\leq E$
	to be respectively the terms $x_{N,k}$'s and $x(N,k)$'s for $2\leq k\leq 2^Q$
	in the expression for $x_N$ in Lemma~\ref{lem:expression for xN and x'N}.
	
	Similarly, we want the sum
	$$s_{N-P}x_{N,1}\alpha^{x(N,1)}+
	\sum_{i=1}^D s_{N-P}d_{N,i}\alpha^{d(N,i)}=s_{N-P}\left(x_{N,1}\alpha^{x(N,1)}+\sum_{i=1}^D d_{N,i}\alpha^{d(N,i)}\right)$$
	to be $s_{N-P}x_N$; therefore we simply choose the
	$d_{N,i}$'s and $d(N,i)$'s for $1\leq i\leq D$
	to be respectively the terms $x_{N,k}$'s and $x(N,k)$'s
	for $2\leq k\leq 2^Q$ above.

	Finally, choose $F=Q2^{Q-1}+\eta$ (with $\eta$ in Lemma~\ref{lem:expanding terms in the double sum}) and we want the sum
	$\displaystyle\sum_{\ell=1}^F f_{N,\ell}\alpha^{f(n,\ell)}$
	to be 
	$$x'_N+\sum_{i=1}^{P-Q-1}\sum_{j=0}^{D_{N,Q+i}}x_N\frac{(\pm 1)^{n_{N-P+Q+i}j}C_{N-P+Q+i}B_{N-P+Q+i}^j}{A_{N-P+Q+i}^{j+1}\alpha^{(2j+1)n_{N-P+Q+i}}}$$
	using the expressions for $x'_N$ and 
	and the double sum given in 
	Lemma~\ref{lem:expression for xN and x'N} and
	Lemma~\ref{lem:expanding terms in the double sum}.
	All the properties (i)-(vii) follow from
	our choice and properties of the expressions for $x_N$, $x'_N$,
	and the double sum given in the previous section.
	\end{proof}
	
	\begin{remark}
	In the proof of Proposition~\ref{prop:exist D, E and F}, we have
	that $D+E+F$ is \emph{exactly} the RHS of (i) and the $\gamma_i$'s
	are \emph{exactly} $s$. However, relaxing these as in 
	Proposition~\ref{prop:exist D, E and F} allows us to work with 
	more possible
	collections of data in order to choose a minimal one.
	\end{remark}
	
	\begin{remark}
	Note that we allow any (or even all) of the $D,E,F$ to be $0$ in
	the statement of Proposition~\ref{prop:exist D, E and F}.
	For example, if $D=E=F=0$ then all the tuples are empty,
	the properties (i)-(vi) are vacuously true, and
	property (vii) becomes: 
	$$\displaystyle\left\vert (s-s_{N-P})x_{N,1}\alpha^{x(N,1)}\right\vert
	<\vert\alpha\vert^{-(1-\frac{1}{c^{P-Q-1}(c-1)})n_N+o(n_N)}.$$
	\end{remark}
	
	In the proof of Proposition~\ref{prop:exist D, E and F},
	we prove the existence of the required data
	by crudely expanding out terms in $x_N$, $x'_N$, and those in the 
	double sum without any simplification whatsoever. 
	The \emph{key trick} is the following:
	
	\begin{definition}\label{def:minimal D+E+F}
	Among all the collections of data $(D,E,F,\scrN,\ldots)$ satisfying properties (i)-(vii) in Proposition~\ref{prop:exist D, E and F}, we choose one
	for which $D+E+F$ is minimal. By abusing the notation, we
	still use the same notation $D$, $E$, $F$, $\scrN$, $\gamma_j$'s, $d_{N,i}$'s, $d(N,i)$'s, $e_{N,j}$'s, $e(N,j)$'s, $f_{N,\ell}$'s, 
	and $f(N,\ell)$'s for
	this chosen data with minimal $D+E+F$.
	\end{definition}

	\begin{remark}
	This trick is similar to the one in \cite[Proposition~3.4]{KMN19_AA} in which the authors worked with a vector space with the minimal dimension among a certain family of finite-dimensional vector spaces so that any further non-trivial linear relation would not be possible.
	\end{remark}
	
	\begin{lemma}\label{lem:D+E+F terms are non-zero}
	There are at most finitely many $N$ in $\scrN$ such that
	one of the terms 
	$s_{N-P}x_{N,i}\alpha^{x(N,i)}$,
	$\gamma_je_{N,j}\alpha^{e(N,j)}$, $f_{N,\ell}\alpha^{f(N,\ell)}$
	for $1\leq i\leq D$, $1\leq j\leq E$, and $1\leq \ell\leq F$
	is zero.
	\end{lemma}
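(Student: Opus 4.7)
The plan is to argue by contradiction using the minimality of $D+E+F$ encoded in Definition~\ref{def:minimal D+E+F}. First I would observe that for every $N\geq P+1$ the scalar $s_{N-P}$ is nonzero by \eqref{eq: the s and sk's}, and powers of $\alpha$ are never zero, so each term in the list is zero if and only if its scalar coefficient $d_{N,i}$, $\gamma_j e_{N,j}$, or $f_{N,\ell}$ vanishes. So the statement is equivalent to saying that, off a finite set of exceptions, none of these scalar coefficients vanishes.

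Suppose for contradiction there are infinitely many $N\in\scrN$ for which at least one of the listed terms is zero. Since the number of indices $i,j,\ell$ is $D+E+F$, a fixed finite quantity independent of $N$, pigeonhole produces a single index — say $i_0\in\{1,\ldots,D\}$ — together with an infinite subset $\scrN^\ast\subseteq\scrN$ such that $d_{N,i_0}=0$ for every $N\in\scrN^\ast$. (The cases in which an $e$-index or $f$-index vanishes infinitely often are handled identically; if a constant $\gamma_{j_0}$ is zero, then the corresponding term vanishes for every $N$, and the same argument applies with $\scrN^\ast=\scrN$.)

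The contradiction comes from exhibiting a new collection of data with strictly smaller $D+E+F$ that still satisfies properties (i)--(vii) of Proposition~\ref{prop:exist D, E and F}. I would take the new $D$ to be $D-1$, keep $E$ and $F$ as before, replace $\scrN$ by $\scrN^\ast$, and delete the $i_0$-th entry from the $d$-tuples. Properties (i), (ii), (v), (vi) are immediate for the restricted tuples. Property (iii) is preserved because deleting one index can only decrease the maximum, and property (iv) is preserved because deleting one index can only increase the minimum. Most importantly, inequality \eqref{eq:exist DEF, last property} remains valid verbatim on $\scrN^\ast$ since the summand we removed, $s_{N-P}d_{N,i_0}\alpha^{d(N,i_0)}$, is identically zero on $\scrN^\ast$; thus the left-hand side is literally unchanged. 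This contradicts the minimality of $D+E+F$ in Definition~\ref{def:minimal D+E+F}.

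There is no substantive obstacle here; the lemma is essentially a clean application of the minimality principle built into Definition~\ref{def:minimal D+E+F}. The only care required is the bookkeeping confirming that each of the seven conditions genuinely survives restriction of the index set and deletion of one component — this works precisely because none of the conditions couples different indices to each other, so each property is verified indexwise.
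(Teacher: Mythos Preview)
Your proof is correct and follows exactly the same approach as the paper's: assume some term vanishes on an infinite subset of $\scrN$, delete that term, and contradict the minimality of $D+E+F$ from Definition~\ref{def:minimal D+E+F}. The paper's own argument is a one-sentence sketch of precisely this, and your version supplies the routine details (pigeonhole to fix a single index, and the check that properties (i)--(vii) survive deletion and restriction).
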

	\begin{proof}
	If there is a term that is zero for an infinite subset $\scrN'$
	of $\scrN$, then we have a new collection of data in which
	$\scrN$ is replaced by $\scrN'$ and that zero term is removed. 
	This violates the minimality of $D+E+F$.
	\end{proof}
	
	The point of Definition~\ref{def:minimal D+E+F} is that
	any non-trivial linear relation among the 
	$x_{N,1}\alpha^{x(N,1)}$, $s_{N-P}x_{N,1}\alpha^{x(N,1)}$,
	$s_{N-P}d_{N,i}\alpha^{d(N,i)}$, $e_{N,j}\alpha^{e(N,j)}$,
	and $f_{N,\ell}\alpha^{f(N,\ell)}$ that holds for infinitely
	many $N$ must involve the first 2 terms. 
	
	\begin{proposition}\label{prop:non-trivial relation must involve..}
	Suppose there exist an infinite subset $\scrN'$ of $\scrN$ and complex numbers
	$\lambda_1$, $\lambda_2$, $\tilde{d}_i$, $\tilde{e}_j$, $\tilde{f}_{\ell}$ for $1\leq i\leq D$, $1\leq j\leq E$, and $1\leq \ell\leq F$
	not all of which are zero such that:
	\begin{align}\label{eq:non-trivial relation 1}
	\begin{split}	
	&\lambda_1x_{N,1}\alpha^{x(N,1)}+\lambda_2s_{N-P}x_{N,1}\alpha^{x(N,1)}+\sum_{i=1}^D\tilde{d}_is_{N-P}d_{N,i}\alpha^{d(N,i)}\\
	&+\sum_{j=1}^E\tilde{e}_je_{N,j}\alpha^{e(N,j)}
	+\sum_{\ell=1}^F\tilde{f}_{\ell}f_{N,\ell}\alpha^{f(N,\ell)}=0
	\end{split}
	\end{align}
	for every $N\in\scrN'$. We have:
	\begin{itemize}
		\item [(i)] There exist $\kappa_1$, $\kappa_2$, 
		$\tilde{d}'_i$,
		$\tilde{e}'_j$, $\tilde{f}'_{\ell}$ for 
		$1\leq i\leq D$, $1\leq j\leq E$, and $1\leq \ell\leq F$
		not all of which are zero
		with the following properties:
		\begin{itemize}
			\item [(a)] All the $\kappa_1$, $\kappa_2$, $\tilde{d}'_i$,
		$\tilde{e}'_j$, and $\tilde{f}'_{\ell}$ are in $\Q(\alpha)$. 
			
			\item [(b)] For every $N\in\scrN'$:
	\begin{align}\label{eq:non-trivial relation 2}
	\begin{split}
	&\kappa_1x_{N,1}\alpha^{x(N,1)}+\kappa_2s_{N-P}x_{N,1}\alpha^{x(N,1)}+\sum_{i=1}^D\tilde{d}'_is_{N-P}d_{N,i}\alpha^{d(N,i)}\\
	&+\sum_{j=1}^E\tilde{e}'_je_{N,j}\alpha^{e(N,j)}
	+\sum_{\ell=1}^F\tilde{f}'_{\ell}f_{N,\ell}\alpha^{f(N,\ell)}=0
	\end{split}
	\end{align}
		
			\item [(c)] $\kappa_1\kappa_2\neq 0$.	
		\end{itemize}
		\item [(ii)] $s\in\Q(\alpha)$.
	\end{itemize}
	\end{proposition}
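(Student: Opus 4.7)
My plan has three steps: (I) extract a nonzero $\Q(\alpha)$-valued relation from the hypothesized $\C$-relation, (II) show that any such $\Q(\alpha)$-relation must satisfy $\kappa_1\kappa_2\neq 0$ by combining minimality with a size-comparison argument, and (III) deduce $s\in\Q(\alpha)$ via a dominant-term argument.

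For (I), every term appearing in \eqref{eq:non-trivial relation 1}---namely $x_{N,1}\alpha^{x(N,1)}$, $s_{N-P}x_{N,1}\alpha^{x(N,1)}$, $s_{N-P}d_{N,i}\alpha^{d(N,i)}$, $e_{N,j}\alpha^{e(N,j)}$, and $f_{N,\ell}\alpha^{f(N,\ell)}$---lies in $\Q(\alpha)\subset\R$. Splitting the $\C$-relation into real and imaginary parts yields a nonzero $\R$-relation. Viewing the infinite system of relations (one per $N\in\scrN'$) as a linear system in the unknown coefficients, the system has matrix entries in $\Q(\alpha)$, so its $\Q(\alpha)$-null space has the same dimension as its $\R$-null space, and hence a nonzero $\Q(\alpha)$-solution $(\kappa_1,\kappa_2,\tilde{d}'_i,\tilde{e}'_j,\tilde{f}'_\ell)$ exists, proving (a) and (b).

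For (II), I claim any nonzero $\Q(\alpha)$-tuple satisfying \eqref{eq:non-trivial relation 2} must have $\kappa_1\kappa_2\neq 0$. If $\kappa_1=\kappa_2=0$, then without loss of generality $\tilde{d}'_D\neq 0$ (the cases where some $\tilde{e}'_j$ or $\tilde{f}'_\ell$ is nonzero are symmetric). Solving the relation for $s_{N-P}d_{N,D}\alpha^{d(N,D)}$ and substituting into the expression bounded in \eqref{eq:exist DEF, last property} yields, after setting $\gamma_j^{\mathrm{new}}:=\gamma_j+\tilde{e}'_j/\tilde{d}'_D$, $d_{N,i}^{\mathrm{new}}:=(1-\tilde{d}'_i/\tilde{d}'_D)d_{N,i}$ for $i<D$, $f_{N,\ell}^{\mathrm{new}}:=(1-\tilde{f}'_\ell/\tilde{d}'_D)f_{N,\ell}$, and deleting indices with vanishing new coefficient, a valid collection satisfying all of (i)--(vii) of Proposition~\ref{prop:exist D, E and F} but with strictly smaller $D+E+F$, contradicting Definition~\ref{def:minimal D+E+F}. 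If instead $\kappa_2=0$ and $\kappa_1\neq 0$ (the sub-case $\kappa_1=0,\kappa_2\neq 0$ is identical once one uses that $|s_{N-P}|$ is bounded below by \eqref{eq: the s and sk's}), rewrite the relation as
\[
\kappa_1\,x_{N,1}\alpha^{x(N,1)}=-\sum\tilde{d}'_i s_{N-P}d_{N,i}\alpha^{d(N,i)}-\sum\tilde{e}'_j e_{N,j}\alpha^{e(N,j)}-\sum\tilde{f}'_\ell f_{N,\ell}\alpha^{f(N,\ell)}.
\]
The absolute value of the left side is $|\alpha|^{x(N,1)+o(n_N)}$, while property~(iii) of Proposition~\ref{prop:exist D, E and F} combined with the sub-exponential heights of the data bounds each term on the right by $O(|\alpha|^{x(N,1)-n_{N-P+1}+o(n_N)})$. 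Proposition~\ref{prop:can't be geq 5} yields $n_{N-P+1}\geq n_N/5^{P-1}$ for sufficiently large $N$, so the left side dwarfs the right, a contradiction.

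For (III), with $\kappa_1\kappa_2\neq 0$ in hand the relation from (i)(b) rewrites as
\[
(\kappa_1+\kappa_2 s_{N-P})\,x_{N,1}\alpha^{x(N,1)}=-\sum\tilde{d}'_i s_{N-P}d_{N,i}\alpha^{d(N,i)}-\sum\tilde{e}'_j e_{N,j}\alpha^{e(N,j)}-\sum\tilde{f}'_\ell f_{N,\ell}\alpha^{f(N,\ell)}.
\]
The same size estimate bounds the right side by $O(|\alpha|^{x(N,1)-n_{N-P+1}+o(n_N)})$, whence $|\kappa_1+\kappa_2 s_{N-P}|=O(|\alpha|^{-n_{N-P+1}+o(n_N)})\to 0$ as $N\to\infty$. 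Since $s_{N-P}\to s$, this forces $\kappa_1+\kappa_2 s=0$, i.e., $s=-\kappa_1/\kappa_2\in\Q(\alpha)$, proving (ii). I expect the main obstacle will be the bookkeeping in Case $\kappa_1=\kappa_2=0$ of (II), namely verifying that the redefined collection still satisfies every structural condition of Proposition~\ref{prop:exist D, E and F}---especially the exponent-range inequality $n_{N-P+1}+\max\{d(N,i),e(N,j),f(N,\ell)\}\leq x(N,1)$ and the sub-exponential height-growth---so that the contradiction with Definition~\ref{def:minimal D+E+F} is genuine.
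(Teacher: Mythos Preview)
Your approach mirrors the paper's proof closely, and steps (I), (III), and the size-comparison half of (II) match it exactly. There is, however, one genuine subtlety in step (II) that the paper addresses explicitly and that your ``symmetric'' claim glosses over.

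When $\kappa_1=\kappa_2=0$, the three sub-cases (some $\tilde d'_i\neq 0$; some $\tilde f'_\ell\neq 0$; some $\tilde e'_j\neq 0$) are \emph{not} interchangeable. If you try to eliminate an $e$-term while some $\tilde d'_i$ or $\tilde f'_\ell$ is still nonzero, the substitution of $\gamma_{j^*}e_{N,j^*}\alpha^{e(N,j^*)}$ back into \eqref{eq:exist DEF, last property} produces new $d$- or $f$-coefficients of the form $\bigl(1+\gamma_{j^*}\cdot(\text{element of }\Q(\alpha))\bigr)\,d_{N,i}$; since $\gamma_{j^*}\in K$ is not a priori in $\Q(\alpha)$, these need not lie in $\Q(\alpha)$, violating condition (v) of Proposition~\ref{prop:exist D, E and F}. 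The paper therefore eliminates $d$- and $f$-terms first (forcing all $\tilde d'_i=\tilde f'_\ell=0$), and only then eliminates an $e$-term: at that stage the relation involves $e$-terms alone, so the substitution merely replaces the $\gamma_j$'s by new elements of $K$ and leaves the $e_{N,j}\in\Q(\alpha)$ untouched. Your computation for the case $\tilde d'_D\neq 0$ is correct precisely because the contamination flows the harmless way, into the $\gamma_j$'s; you just need to run the three sub-cases in this order rather than declare them symmetric. This is exactly the ``bookkeeping'' obstacle you anticipated at the end.
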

	\begin{proof}
	For part (i), since the terms $x_{N,1}\alpha^{x(N,1)}$, $s_{N-P}x_{N,1}\alpha^{x(N,1)}$,
	$s_{N-P}d_{N,i}\alpha^{d(N,i)}$, $e_{N,j}\alpha^{e(N,j)}$,
	and $f_{N,\ell}\alpha^{f(N,\ell)}$ are in $\Q(\alpha)$,
	this establishes the existence of the
	$\lambda_1$, $\lambda_2$,
	$\tilde{d}'_i$, $\tilde{e}'_j$, $\tilde{f}'_{\ell}$
	satisfying properties (a) and (b). We now prove
	$\kappa_1\kappa_2\neq 0$.
	
	First, assume that $\kappa_1=\kappa_2=0$. This means:
	\begin{align}\label{eq:non-trivial relation 3}
	\sum_{i=1}^D\tilde{d}'_is_{N-P}d_{N,i}\alpha^{d(N,i)}
	+\sum_{j=1}^E\tilde{e}'_je_{N,j}\alpha^{e(N,j)}
	+\sum_{\ell=1}^F\tilde{f}'_{\ell}f_{N,\ell}\alpha^{f(N,\ell)}=0
	\end{align}
	for $N\in\scrN'$. Now assume that $\tilde{d}'_{i^*}\neq 0$
	for some $i^*$. Then for $N\in\scrN'$, equation \eqref{eq:non-trivial relation 3} allows
	us to express $s_{N-P}d_{N,i^*}\alpha^{d(N,i^*)}$
	as a linear combination of the
	$s_{N-P}d_{N,i}\alpha^{d(N,i)}$ with $i\neq i^*$,
	the $e_{N,j}\alpha^{e(N,j)}$, and the
	$f_{N,\ell}\alpha^{f(N,\ell)}$ with coefficients
	in $\Q(\alpha)$. This allows us to come up with a new data 
	satisfying the properties in 
	Proposition~\ref{prop:exist D, E and F}
	in which $\scrN$ is replaced by $\scrN'$ and $D$ is replaced by 
	$D-1$. This contradicts the minimality of $D+E+F$.
	Therefore $d_{N,i}=0$ for  $1\leq i\leq D$ and every $N\in\scrN'$.
	Similarly $f_{N,\ell}=0$ for $1\leq \ell\leq F$ and every $N\in\scrN'$.
	So now \eqref{eq:non-trivial relation 3} becomes:
	$$\sum_{j=1}^E\tilde{e}'_je_{N,j}\alpha^{e(N,j)}=0\ \text{for $N\in\scrN'$.}$$
	Arguing as before, we obtain a contradiction to the minimality
	of $D+E+F$. This proves at least $\kappa_1$ or $\kappa_2$
	is non-zero.
	
	We emphasize that the arguments should be run 
	in the above order (i.e. obtaining $d_{N,i}=f_{N,\ell}=0$ first). 
    Suppose one tried to prove all the $e_{N,j}=0$ \emph{first} by 
    using \eqref{eq:non-trivial relation 3} to express some
    $e_{N,j^*}\alpha^{e(N,j^*)}$ as a linear combination
    of the $s_{N-P}d_{N,i}\alpha^{d(N,i)}$, the $e_{N,j}\alpha^{e(N,j)}$ with $j\neq j^*$, and the $f_{N,\ell}\alpha^{f(N,\ell)}$.
    Then due to the term $\gamma_{j^*}e_{N,j^*}\alpha^{e(N,j^*)}$
    in the LHS of \eqref{eq:exist DEF, last property} and
    since at the moment we do \emph{not} necessarily have
    $\gamma_{j^*}\in\Q(\alpha)$, the ``new'' $d_{N,i}$
    and $f_{N,\ell}$ would not remain in $\Q(\alpha)$ and the new data would not satisfy all the properties in Proposition~\ref{prop:exist D, E and F}. 
    
    Suppose $\kappa_1\neq 0$ while $\kappa_2=0$. We have
    $$\kappa_1x_{N,1}\alpha^{x(N,1)}+\sum_{i=1}^D\tilde{d}'_is_{N-P}d_{N,i}\alpha^{d(N,i)}
	+\sum_{j=1}^E\tilde{e}'_je_{N,j}\alpha^{e(N,j)}
	+\sum_{\ell=1}^F\tilde{f}'_{\ell}f_{N,\ell}\alpha^{f(N,\ell)}=0$$
    for $N\in\scrN'$. We divide by $x_{N,1}\alpha^{x(N,1)}$
    and
    note that each of $d(N,i)-x(N,1)$, $e(N,j)-x(N,1)$,
    and $f(N,\ell)-x(N,1)$ is
    at most $-n_{N-P+1}\leq -n_N/5^{P-1}$
    by (iii) in Proposition~\ref{prop:exist D, E and F}
    and Proposition~\ref{prop:can't be geq 5} while
    each of $\vert s_{N-P}d_{N,i}/x_{N,1}\vert$, 
    $\vert e_{N,j}/x_{N,1}\vert$,
    and $\vert f_{N,\ell}/x_{N,1}\vert$
    is $e^{o(n_N)}$. Therefore taking limit as $N\to\infty$ (and $N\in\scrN'$), we get $\kappa_1=0$, contradiction. The case $\kappa_2\neq 0$ and $\kappa_1=0$ is ruled out in a similar way. This finishes the proof of (c).
    
    For part (ii), we use \eqref{eq:non-trivial relation 2}, divide
    by $x_{N,1}\alpha^{x(N,1)}$ and let $N\to\infty$ as above to obtain
    $$\kappa_1+\kappa_2s=0$$
    and this proves $s=-\kappa_1/\kappa_2\in \Q(\alpha)$.  
	\end{proof}
	
	\begin{proposition}\label{prop:s in Qalpha}
	The number $s$ is in $\Q(\alpha)$.
	\end{proposition}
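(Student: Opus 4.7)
The plan is to apply the Subspace Theorem (Theorem~\ref{thm:Subspace}) to the minimal data from Definition~\ref{def:minimal D+E+F} in order to produce, along some infinite subset $\scrN'\subseteq\scrN$, a common non-trivial $K$-linear relation among the $n+1:=D+E+F+2$ terms appearing in the LHS of \eqref{eq:exist DEF, last property}. Proposition~\ref{prop:non-trivial relation must involve..}(ii) then immediately converts this into $s\in\Q(\alpha)$.

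Concretely, for each sufficiently large $N\in\scrN$ form the projective point $\tilde{\bfx}_N\in\bP^n(K)$ whose homogeneous coordinates (in a fixed order) are $x_{N,1}\alpha^{x(N,1)}$, $s_{N-P}x_{N,1}\alpha^{x(N,1)}$, the $D$ terms $s_{N-P}d_{N,i}\alpha^{d(N,i)}$, the $E$ terms $e_{N,j}\alpha^{e(N,j)}$, and the $F$ terms $f_{N,\ell}\alpha^{f(N,\ell)}$. All coordinates are nonzero for all but finitely many $N$ by Lemma~\ref{lem:D+E+F terms are non-zero}. Take $S\subset M_K$ finite, containing every archimedean place together with every finite place above the rational primes ramifying in $\Q(\alpha)$, so that $\alpha$ becomes an $S$-unit. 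Let $v_0\in S$ be the archimedean place induced by the fixed embedding $\Qbar\hookrightarrow\C$, so that $\vert\alpha\vert_{v_0}>1$. At $v_0$ take the distinguished linear form $L_{v_0,0}$ to be the $K$-linear form in the coordinate variables whose evaluation at $\bfx_N$ equals the bracketed quantity in \eqref{eq:exist DEF, last property}; take every other $L_{v,j}$ (at $v_0$ and at every other $v\in S$) to be a coordinate projection. These $n+1$ forms are linearly independent at each $v\in S$ because $L_{v_0,0}$ has coefficient $-1$ on the second coordinate.

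The heart of the argument is to verify $\prod_{v\in S}\prod_j\vert L_{vj}(\bfx_N)\vert_v/\vert\bfx_N\vert_v\leq H(\tilde{\bfx}_N)^{-n-1-\epsilon}$ for a suitable $\epsilon>0$ and all sufficiently large $N$. Property (vii) bounds the distinguished ratio at $v_0$ by $\vert\alpha\vert^{-(1-1/(c^{P-Q-1}(c-1)))n_N+o(n_N)}/\vert\bfx_N\vert_{v_0}$. All other ratios are coordinate-projection ratios; applying the product formula to each coordinate separately, using that $\alpha$ is an $S$-unit and that property (vi) of Proposition~\ref{prop:exist D, E and F} gives $h(x_{N,1}),h(d_{N,i}),h(e_{N,j}),h(f_{N,\ell})=o(n_N)$, one sees that their total contribution is controlled by the non-archimedean contribution of $s_{N-P}$, which appears in only $D+1$ of the coordinates and satisfies $H(s_{N-P})\leq\vert\alpha\vert^{n_1+\cdots+n_{N-P}+o(n_N)}\leq\vert\alpha\vert^{n_{N-P+1}/(c-1)+o(n_N)}$ by Lemma~\ref{lem:sum of indices bound}. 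Against this, combining Lemma~\ref{lem: upper bound for xN}(iii) with Proposition~\ref{prop:exist D, E and F}(iii) and the $S$-unit property yields the height estimate $H(\tilde{\bfx}_N)\leq\vert\alpha\vert^{n_N/(c^{P-Q-1}(c-1))+o(n_N)}\cdot H(s_{N-P})$. Using Proposition~\ref{prop:exist D, E and F}(i) together with the uniform bound $D_{N,i}\leq 5^{P-i}/2$ from Proposition~\ref{prop:can't be geq 5}, the integer $n+1$ is bounded purely in terms of $P$ and $Q$; fixing first $Q$ and then $P\gg Q$ makes the desired Subspace inequality hold.

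This height accounting is the main obstacle, and its feasibility rests crucially on the two key devices. The buffer zone $Q$ (multiplication by $x_N$) shrinks the $\alpha$-part of $H(\tilde{\bfx}_N)$ from the naive $\vert\alpha\vert^{n_N/(c-1)}$ down to $\vert\alpha\vert^{n_N/(c^{P-Q-1}(c-1))}$, while degrading the error only to $\vert\alpha\vert^{-(1-1/(c^{P-Q-1}(c-1)))n_N}$; without this trick the $(n+1)$-fold inflation $H(\tilde{\bfx}_N)^{n+1+\epsilon}$ would swamp the error for $c$ only slightly larger than $2$. The minimality from Definition~\ref{def:minimal D+E+F} in turn controls $n+1$ and, just as importantly, does the essential work \emph{after} the Subspace Theorem, via Proposition~\ref{prop:non-trivial relation must involve..}(i), to promote the resulting $K$-linear relation to a $\Q(\alpha)$-linear relation with $\kappa_1\kappa_2\neq 0$. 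Once the Subspace inequality is in force, Theorem~\ref{thm:Subspace} packs all $\bfx_N$ into finitely many hyperplanes of $K^{n+1}$; pigeonhole gives the required infinite $\scrN'\subseteq\scrN$ on which a common non-trivial relation holds, and Proposition~\ref{prop:non-trivial relation must involve..}(ii) then concludes $s\in\Q(\alpha)$.
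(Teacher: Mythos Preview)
Your overall plan---feed the minimal data of Definition~\ref{def:minimal D+E+F} into the Subspace Theorem and then invoke Proposition~\ref{prop:non-trivial relation must involve..}(ii)---is exactly the paper's strategy. Two points in your height accounting, however, are not right as stated.

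First, the estimate $H(\tilde{\bfx}_N)\leq\vert\alpha\vert^{n_N/(c^{P-Q-1}(c-1))+o(n_N)}\cdot H(s_{N-P})$ is false: it accounts only for the archimedean places above the distinguished real place $v_\infty$ of $\Q(\alpha)$. At the places $w_k$ above the conjugate place $w$ one has $\vert\alpha^{t}\vert_{w_k}=\vert\beta\vert^{t\,\delta/[K:\Q]}=\vert\alpha\vert^{-t\,\delta/[K:\Q]}$, so coordinates with large \emph{negative} exponent are large there; by property~(iv) of Proposition~\ref{prop:exist D, E and F} the exponents range down to about $-3n_N$, which forces an extra factor of order $\vert\alpha\vert^{3n_N/2}$ into $H(\tilde{\bfx}_N)$. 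The paper therefore only uses the crude bound $H(\tilde{\bfv}_N)\leq\vert\alpha\vert^{5n_N}$, and it is this bound that feeds into the $H^{\epsilon'}$ term via~\eqref{eq:apply ST 4}. Your verification sketch, which relies on the much smaller height, does not go through as written.

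Second, putting the distinguished linear form at a \emph{single} archimedean place $v_0$ gives a gain of only $\vert\alpha\vert^{-(1-\cdots)n_N\cdot\delta/[K:\Q]}$, i.e.\ exponent roughly $-n_N/(2m)$ with $m=[K:\Q(\alpha)]$, not the $-n_N/2$ that appears in~\eqref{eq:apply ST0}. The paper recovers the full $1/2$ by using the distinguished form $L_{v,T,1}$ at \emph{every} place $v_k$ above $v_\infty$ (note the inequality~\eqref{eq:exist DEF, last property} is $\tau_k$-equivariant once one twists the constant coefficients $s,\gamma_j$ by $\tau_k^{-1}$), and at each $w_k$ above $w$ it takes $L_{w_k,T,1}=T_1$, whose value $\sigma_k(x_{N,1}\alpha^{x(N,1)})$ is automatically $o(1)$ because $x(N,1)>0$. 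With your single-place setup the target inequality~\eqref{eq:s in Qalpha, cond on P,Q,epsilon 2} becomes $(\text{same LHS})<1/(2m)$ rather than $<1/2$; this is still attainable (choose $Q$, then $P$, large depending on $[K:\Q]$), but your proposal neither notes this dependence nor carries out the corrected bookkeeping.
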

	\begin{proof}
	We will obtain a non-trivial linear relation
	as in the statement of 
	Proposition~\ref{prop:non-trivial relation must involve..}
	for infinitely  many $N$ and apply part (ii).
	
	Let $v_{\infty}$ be the valuation on $\Q(\alpha)$ corresponding to the 
	usual $\vert\cdot\vert$ and let $w$ be the other 
	archimedean one. Note that we follow the normalization
	in \cite[Chapter~1]{BG06_HI}, hence:
	$$\vert x\vert_{v_\infty}=\vert x\vert^{1/2}\ \text{and}\ \vert x\vert_w=\vert\sigma(x)\vert^{1/2}.$$
	The archimedean valuations on $K$ are denoted as $v_1,\ldots,v_m$
	and $w_1,\ldots,w_m$ where the $v_i$'s lie above $v_{\infty}$ and the 
	$w_i$'s lie above $w$. They correspond to the following real
	or one for each pair of complex-conjugate embeddings of 
	$K$ into $\C$: $\tau_1,\ldots,\tau_m$ and $\sigma_1,\ldots,\sigma_m$. In other words:
	$$\vert x\vert_{v_i}=\vert\tau_i(x)\vert^{d(v_i)/[K:\Q]}\ \text{and}\
	  \vert x\vert_{w_i}=\vert\sigma_i(x)\vert^{d(w_i)/[K:\Q]}$$
	  where $d(v_i)=[K_{v_i}:\R]=1$ or $2$ depending on whether $v_i$ is real or complex and a similar definition for $d(w_i)$. Note that the
	  $\tau_i$'s restrict to the identity automorphism 
	  on $\Q(\alpha)$ while
	  the $\sigma_i$'s restrict to $\sigma$ on $\Q(\alpha)$.
	 In fact, since $K/\Q$ is Galois, either all archimedean valuations
	 are real or all are complex and we simply let $\delta$ be the common value of the $d(v_i)$ and $d(w_i)$. We have:
	  \begin{equation}\label{eq:sum of local degrees}
	  \sum_{i=1}^m d(v_i)=\sum_{i=1}^m d(w_i)=m\delta=[K:\Q(\alpha)]=\frac{[K:\Q]}{2}.
	  \end{equation}
	  
	 Our next step is to apply the Subspace Theorem using \eqref{eq:exist DEF, last property}. 
	 Fix $\epsilon>0$ that will be specified later. 
	 Let 
	 $$S=M_K^{\infty}=\{v_1,\ldots,v_m,w_1,\ldots,w_m\}.$$
	 We will work with linear forms in variables
	 $$(T_1,T_2,X_1,\ldots,X_D,Y_1,\ldots,Y_E,Z_1,\ldots,Z_F)$$
	 and the vectors
	 \begin{align*} 
	 \bfv_N=(&x_{N,1}\alpha^{x(N,1)},-s_{N-P}x_{N,1},(-s_{N-P}d_{N,i}\alpha^{d(N,i)})_{1\leq i\leq D},(e_{N,j}\alpha^{e(N,j)})_{1\leq j\leq E},\\
	 &(-f_{N,\ell}\alpha^{f(N,\ell)})_{1\leq \ell\leq F})
	 \end{align*}
	 for $N\in\scrN$. 
	
	For each $v\in S$, the linear forms are denoted: $L_{v,T,1}$, $L_{v,T,2}$, $L_{v,X,i}$ for $1\leq i\leq D$, $L_{v,Y,j}$ for $1\leq j\leq E$,
	and $L_{v,Z,\ell}$ for $1\leq \ell\leq F$. The reason is
	that they will be defined as follows:
	\begin{itemize}
		\item For any $v\in S$, $L_{v,T,2}=T_2$, $L_{v,X,i}=X_i$,
		$L_{v,Y,j}=Y_j$, and $L_{v,Z,\ell}=Z_{\ell}$ for any $i,j,\ell$.
		
		\item If $v=w_k$ for some $1\leq k\leq m$, define $L_{v,T,1}=T_1$ and we have $$\vert L_{v,T,1}(\bfv_N)\vert_v=\vert \sigma(x_{N,1}\alpha^{x(N,1)})\vert^{\delta/[K:\Q]}=o(1)$$
		since $\vert\sigma(x_{N,1})\vert=e^{o(n_N)}$
		while $\vert\sigma(\alpha^{x(N,1)})\vert=\vert\alpha\vert^{-x(N,1)}$ and $x(N,1)\gg n_N$.
		
		\item If $v=v_k$ for some $1\leq k\leq m$, define:
		\begin{align*}
		L_{v,T,1}=&\tau_k^{-1}(s)T_1+T_2+X_1+\ldots+X_D+\tau_k^{-1}(\gamma_1)Y_1+\ldots+\tau_k^{-1}(\gamma_E)Y_E\\
		&+Z_1+\ldots+Z_F
		\end{align*}
		so that 
		$\vert L_{v,T,1}(\bfv_N)\vert_v$
		is exactly the LHS of \eqref{eq:exist DEF, last property} to the power ${\delta/[K:\Q]}$. Therefore 
		$$\vert L_{v,T,1}(\bfv_N)\vert_v
		<\vert\alpha\vert^{-(1-\frac{1}{c^{P-Q-1}(c-1)}-\epsilon)n_N\delta/[K:\Q]}$$
		for all sufficiently large $N\in\scrN$.
	\end{itemize}
	
	Combining with ~\eqref{eq:sum of local degrees}, we have:
	\begin{equation}\label{eq:apply ST0}
	\prod_{v\in S}\vert L_{v,T,1}(\bfv_N)\vert_v=O(\vert\alpha\vert^{-(1-\frac{1}{c^{P-Q-1}(c-1)}-\epsilon)\frac{n_N}{2}}).
	\end{equation}
	Now since $\alpha$ is an $S$-unit (i.e. usual algebraic integer unit), by the product
	formula together with the fact that $\vert s_{N-P}\vert=O(1)$ and the
	$x_{N,1}$, $d_{N,i}$, $e_{N,j}$, $f_{N,\ell}$ have height $o(n_N)$,
	we have that for all sufficiently large $N\in\scrN$:
	\begin{equation}\label{eq:apply ST 1}
	\prod_{v\in S}\prod_{L}\vert L(\bfv_N)\vert_v=O(\vert\alpha\vert^{-(1-\frac{1}{c^{P-Q-1}(c-1)}-2\epsilon)\frac{n_N}{2}})
	\end{equation}
	where $L$ ranges over all the $L_{v,T,1}$, $L_{v,T,2}$, $L_{v,X,i}$, $L_{v,Y,j}$, and $L_{v,Z,\ell}$. Recall that 
	$\tilde{\bfv}_N$ denotes the 
	point in the projective space with coordinates
	$\bfv_N$. Since the $\vert d(N,i)\vert$, $\vert e(N,j)\vert$,
	and $\vert f(N,\ell)\vert$ are less than $3n_N$, we have
	$$H(\tilde{\bfv}_N)\leq \vert\alpha\vert^{5n_N}$$
	for every $N\in\scrN$. 
	
	We need to obtain $\epsilon'>0$
	such that:
	\begin{equation}\label{eq:need to obtain epsilon'}
	\prod_{v\in S}\prod_{L}\frac{\vert L(\bfv_N)\vert_v}{\vert\bfv_N\vert_v}< H(\tilde{\bfv}_N)^{-D-E-F-2-\epsilon'}
	\end{equation}
	for all large $N$ in $\scrN$.
	We have:
	\begin{equation}\label{eq:apply ST 2}
	H(\tilde{\bfv}_N)^{D+E+F+2}\prod_{v\in S}\prod_{L}\frac{1}{\vert\bfv_N\vert_v}=\left(\prod_{v\in M_K\setminus S}\vert\bfv_N\vert_v\right)^{D+E+F+2}.
	\end{equation} 	
 	Since all the $x_{N,1}$, $d_{N,i}$, $e_{N,j}$, and $f_{N,\ell}$
 	have multiplicative height 
 	$e^{o(n_N)}$, we have:
 	\begin{equation}
 	\left(\prod_{v\in M_K\setminus S}\vert\bfv_N\vert_v\right)^{D+E+F+2}\leq H(s_{N-P})^{D+E+F+2}\vert\alpha\vert^{\epsilon n_N}
 	\end{equation}
 	for all sufficiently large $N\in\scrN$. We have:
 	$$H(s_{N-p})\leq \vert\alpha\vert^{n_1+\ldots+n_{N-P}+o(n_{N-P})}\leq \vert\alpha\vert^{n_{N-P+1}}$$
 	for all large $N\in\scrN$. From Proposition~\ref{prop:exist D, E and F} and the definition of the $D_{N,Q+i}$'s, we have:
 	\begin{align*}
 	D+E+F+2&\leq 2^{Q+1}+Q2^{Q-1}+2^Q\sum_{i=1}^{P-Q-1}(D_{N,Q+i}+1)\\
		&\leq 2^Q(P-(Q/2)+1)+2^Q\sum_{i=1}^{P-Q-1}\frac{n_N}{2n_{N-P+Q+i}}\\
		&\leq 2^QP+2^{Q-1}\sum_{i=1}^{P-Q-1}\frac{n_N}{n_{N-P+Q+i}} 
 	\end{align*}
	assuming $Q\geq 2$.
	Hence for all large $N\in\scrN$,  
	\begin{equation}\label{eq:apply ST 3}
	H(s_{N-P})^{D+E+F+2}\leq \vert\alpha\vert^{\Omega_N}
	\end{equation}
	where
	\begin{align}\label{eq:bounding OmegaN}
	\begin{split}
	\Omega_N:&=n_{N-P+1}2^QP+2^{Q-1}\left(\sum_{i=1}^{P-Q-1}\frac{n_{N-P+1}}{n_{N-P+Q+i}}\right)n_N\\
	&\leq \frac{2^QP}{c^{P-1}}n_N+2^{Q-1}\left(\sum_{i=1}^{P-Q-1}\frac{1}{c^{Q+i-1}}\right)n_N\\
	&\leq \left(\frac{2^QP}{c^{P-1}}+\frac{2^{Q-1}}{c^Q}\cdot\frac{1}{1-(1/c)}\right)n_N.	
	\end{split}
	\end{align}
	
	Finally, note that:
	\begin{equation}\label{eq:apply ST 4}
	H(\tilde{\bfv}_N)^{-\epsilon'}\geq \vert\alpha\vert^{-5\epsilon'n_N}.
	\end{equation}
	In order to obtain $\epsilon'$ satisfying \eqref{eq:need to obtain epsilon'}, we combine \eqref{eq:apply ST 1} and \eqref{eq:apply ST 2}--\eqref{eq:apply ST 4} and require that:
	\begin{equation}\label{eq:s in Qalpha, cond on P,Q,epsilon 1}
	\left(\frac{2^QP}{c^{P-1}}+\frac{2^{Q-1}}{c^Q}\cdot\frac{1}{1-(1/c)}\right)+\epsilon-\frac{1}{2}\left(1-\frac{1}{c^{P-Q-1}(c-1)}-2\epsilon\right)<-5\epsilon'.
	\end{equation}
	Such an $\epsilon'$ exists as long as the LHS is negative. 
	Therefore, at the beginning of Section~\ref{sec:prelim}, we choose
	sufficiently large integers $2\leq Q<P$ and here we choose a 
	sufficiently small $\epsilon>0$ such that:
	\begin{equation}\label{eq:s in Qalpha, cond on P,Q,epsilon 2}
	\frac{2^QP}{c^{P-1}}+\frac{2^{Q-1}}{c^Q}\cdot\frac{1}{1-(1/c)}+2\epsilon+\frac{1}{2c^{P-Q-1}(c-1)}<\frac{1}{2};
	\end{equation}
	this is possible since $c>2$. Then we can apply the Subspace Theorem to have that there exists a non-trivial linear relation satisfied by  the coordinates
	of $\bfv_N$ for infinitely many $N\in\scrN$. Then we use part (ii) of Proposition~\ref{prop:non-trivial relation must involve..} to finish the proof.
 	\end{proof}
	
	\section{The proof of Theorem~\ref{thm:general}}
	We continue with the notations of Section~\ref{sec:prelim} and 
	\emph{ignore} those in Section~\ref{sec:s is in Qalpha}. We no longer assume the choice of $P$, $Q$, and $\epsilon$ as in
	\eqref{eq:s in Qalpha, cond on P,Q,epsilon 1} and 
	\eqref{eq:s in Qalpha, cond on P,Q,epsilon 2}. However, we use the 
	crucial result
	that $s\in\Q(\alpha)$. 
	While the arguments in Section~\ref{sec:s is in Qalpha} are
	valid for any sufficiently large $Q<P$ (and sufficiently small $\epsilon$), 
	those in this section require that $Q=P-1$:
	
	\begin{assumption}
	From now on, $Q=P-1$. Therefore $x_N=\displaystyle\prod_{i=1}^{P-1}U_{N-P+i}$, the $x(N,i)'s$ are 
	the numbers
	$\pm n_{N-P+1}\pm n_{N-P+2}\cdots\pm n_{N-1}$, $\displaystyle\frac{x'_N}{x_N}=\sum_{i=1}^{P-1}\frac{C_{N-P+i}}{U_{N-P+i}}$, and most importantly:
	\begin{equation}\label{eq:important when Q=P-1}
		(s-s_{N-P})x_N-x'_N=(s-s_{N-1})x_N=x_N\sum_{k=N}^\infty\frac{C_k}{U_k}.
	\end{equation}
	\end{assumption}
	
	The following numbers $x(N,+)$, $x(N,-)$, $\tilde{x}(N,+)$, and
	$\tilde{x}(N,-)$ will play an important role:
	\begin{lemma}\label{lem:introducing x(N,+)...}
	\begin{itemize}
	\item [(i)] $x(N,+):=-n_{N-P+1}-\ldots-n_{N-2}+n_{N-1}$ is the smallest
	non-negative numbers while
	$x(N,-):=-x(N,+)$ is the largest non-positive numbers among
	the $x(N,i)$'s.
	
	\item [(ii)] Write $\tilde{x}(N,+):=-n_{N-P+1}-\ldots-n_{N-1}+n_N$
	and write $\tilde{x}(N,-):=-\tilde{x}(N,+)$. We have
	$\tilde{x}(N,-)<x(N,-)<x(N,+)<\tilde{x}(N,+)$. Moreover:
	$$x(N,+)-x(N,-)\geq \frac{2(c-2)}{c-1}n_{N-1}\geq \frac{2(c-2)}{5(c-1)}n_N,\ \text{and}$$
	$$x(N,-)-\tilde{x}(N,-)=\tilde{x}(N,+)-x(N,+)=n_N-2n_{N-1}\geq \frac{c-2}{c}n_N$$
	for all sufficiently large $N$.
	
	\item [(iii)] $\vert (s-s_{N-1})x_N\vert=\vert\alpha\vert^{\tilde{x}(N,-)+o(n_N)}$ for all sufficiently large $N$.
	\end{itemize}
	\end{lemma}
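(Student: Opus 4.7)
The plan is to prove the three parts in succession. Part (i) is combinatorial and reduces to the observation that $n_{N-1}$ strictly dominates the sum $n_{N-P+1}+\cdots+n_{N-2}$; parts (ii) and (iii) are then routine estimations using Lemma~\ref{lem:sum of indices bound}, Proposition~\ref{prop:can't be geq 5}, and Lemma~\ref{lem: upper bound for xN}.

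For part (i), the $2^{P-1}$ numbers $x(N,i)$ are exactly the sign-selections $\pm n_{N-P+1}\pm\cdots\pm n_{N-1}$. Lemma~\ref{lem:sum of indices bound}(i) applied with $m=N-2$ gives
\begin{equation*}
n_{N-P+1}+\cdots+n_{N-2}\leq n_1+\cdots+n_{N-2}<\frac{n_{N-1}}{c-1}<n_{N-1},
\end{equation*}
the last inequality because $c>2$. Consequently, any sign-selection in which $n_{N-1}$ has sign $-$ already produces a strictly negative value, so every non-negative $x(N,i)$ has sign $+$ on $n_{N-1}$. Among such selections the sum $n_{N-1}+\sum_{j=N-P+1}^{N-2}\epsilon_j n_j$ is minimized when all $\epsilon_j=-1$, yielding exactly $x(N,+)$. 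The claim for $x(N,-)$ follows at once from the symmetry of the set of $x(N,i)$ under negation.

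For part (ii), each gap can be computed directly. Observe that
\begin{equation*}
x(N,+)-x(N,-)=2x(N,+)=2\bigl(n_{N-1}-(n_{N-P+1}+\cdots+n_{N-2})\bigr),
\end{equation*}
and the bound displayed in part (i) gives a lower bound of $2(c-2)n_{N-1}/(c-1)$; the second inequality $\tfrac{2(c-2)}{5(c-1)}n_N$ then follows from Proposition~\ref{prop:can't be geq 5}, which for $N$ large forces $n_{N-1}\geq n_N/5$. The identity $\tilde{x}(N,+)-x(N,+)=n_N-2n_{N-1}$ is immediate from the definitions, and using $n_N\geq c\,n_{N-1}$ yields the stated lower bound $(c-2)n_N/c$. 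The equation $x(N,-)-\tilde{x}(N,-)=\tilde{x}(N,+)-x(N,+)$ follows from $x(N,-)=-x(N,+)$ and $\tilde{x}(N,-)=-\tilde{x}(N,+)$, and the chain $\tilde{x}(N,-)<x(N,-)<x(N,+)<\tilde{x}(N,+)$ is then a direct consequence of these sign computations.

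For part (iii), it suffices to specialize Lemma~\ref{lem: upper bound for xN}(ii) to $Q=P-1$, which yields
\begin{equation*}
|x_N(s-s_{N-1})|=|\alpha|^{n_{N-P+1}+\cdots+n_{N-1}-n_N+o(n_N)},
\end{equation*}
and then observe that the exponent on the right equals $\tilde{x}(N,-)$ by the very definition of $\tilde{x}(N,\pm)$. I do not expect any substantial obstacle: this lemma is essentially a bookkeeping result packaging the geometry of the exponents appearing in $x_N$ when $Q=P-1$, for use in the subsequent refined Subspace Theorem argument. The only mild subtlety is to invoke Proposition~\ref{prop:can't be geq 5} for $N$ sufficiently large so that the bound $n_{N-1}\geq n_N/5$ is in force.
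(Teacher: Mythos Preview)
Your proof is correct and follows essentially the same approach as the paper: both rely on Lemma~\ref{lem:sum of indices bound} and Proposition~\ref{prop:can't be geq 5} for the combinatorics and estimates in parts (i)--(ii), and both obtain part (iii) by specializing Lemma~\ref{lem: upper bound for xN}(ii) to $Q=P-1$. The paper is simply terser, calling part (i) and the remainder of part (ii) ``elementary'' without spelling out the sign-selection argument you give.
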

	\begin{proof}
	We have:
	\begin{align}
	\begin{split}
		x(N,+)-x(N,-)&=2(n_{N-1}-n_{N-2}-\ldots-n_{N-P+1})\\
		&\geq \frac{2(c-2)}{c-1}n_{N-1}\geq \frac{2(c-2)}{5(c-1)}n_N
	\end{split}
	\end{align}
	for all large $N$ by Lemma~\ref{lem:sum of indices bound} and 
	Proposition~\ref{prop:can't be geq 5}. Part (i) and the 
	rest of part (ii) are elementary.
	Part (iii) is simply
	Lemma~\ref{lem: upper bound for xN}(ii) when $Q=P-1$.
	\end{proof}
	
	First, we prove
	the existence of a certain expression and then choose a minimal one:
	
	\begin{proposition}\label{prop:exist D and F}
	Note that $Q=P-1$. There exist integers $D,F\geq 0$, an infinite set $\scrN$,
	tuples $(d_{N,1},\ldots,d_{N,D})$, $(f_{N,1},\ldots,f_{N,F})$, $(d(N,1),\ldots,d(N,D))$ and $(f(N,1),\ldots,f(N,F))$ for each $N\in\scrN$ with the following properties:
	\begin{itemize}
		\item [(i)] $D+F\leq 2^{P-1}+(P-1)2^{P-2}$.
		\item [(ii)] For every $N\in\scrN$, the $d(N,i)$'s and $f(N,j)$'s are integers for every $i,j$.
		\item [(iii)] For every $N\in\scrN$,  $\max_i \vert d(N,i)\vert\leq n_{N-P+1}+\ldots+n_{N-1}$. 
		\item [(iv)] For every $N\in\scrN$, $\max_j \vert f(N,j)\vert\leq n_{N-P+2}+\ldots+n_{N-1}$.
		\item [(v)] For every $N\in\scrN$, the $d_{N,i}$'s 
		and $f_{N,j}$'s are elements of $\Q(\alpha)$.
		\item [(vi)] As $N\to\infty$, we have $h(d_{N,i})/n_N\to 0$
	     and $h(f_{N,j})/n_n\to 0$ for every $i,j$.
		\item [(vii)] For every $N\in\scrN$, we have
		\begin{align}\label{eq:exist DF last property}	
		\sum_{i=1}^D(s-s_{N-P})d_{N,i}\alpha^{d(N,i)}-\sum_{j=1}^F f_{N,j}\alpha^{f(N,j)}
		=(s-s_{N-1})x_N.
		\end{align}
	\end{itemize}
	\end{proposition}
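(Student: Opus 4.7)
The plan is to construct the required data essentially by directly reading off the expansions of $x_N$ and $x'_N$ from Lemma~\ref{lem:expression for xN and x'N}, and then using the identity \eqref{eq:important when Q=P-1} that is the special consequence of the assumption $Q=P-1$. No new technical input is needed; the proposition is a packaging/existence statement whose purpose is to set the stage for a later minimality argument analogous to Definition~\ref{def:minimal D+E+F}.

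Concretely, I would take $\scrN$ to be the set of all sufficiently large positive integers (large enough that the expansions of Lemma~\ref{lem:expression for xN and x'N} are valid). Set $D=2^{Q}=2^{P-1}$ and $F=Q2^{Q-1}=(P-1)2^{P-2}$, which yields property (i) with equality. Define the tuples $(d_{N,i},d(N,i))_{1\le i\le D}$ to be exactly $(x_{N,i},x(N,i))_{1\le i\le 2^{P-1}}$ from Lemma~\ref{lem:expression for xN and x'N}(i), and $(f_{N,j},f(N,j))_{1\le j\le F}$ to be $(x'_{N,j},x'(N,j))_{1\le j\le Q2^{Q-1}}$ from Lemma~\ref{lem:expression for xN and x'N}(ii).

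Properties (ii), (v), (vi) then follow immediately from parts (a) and (d) of Lemma~\ref{lem:expression for xN and x'N}(i) and parts (a) and (c) of Lemma~\ref{lem:expression for xN and x'N}(ii). Property (iii) follows from Lemma~\ref{lem:expression for xN and x'N}(i)(c) which gives $|x(N,i)|\le x(N,1)=\sum_{i=1}^{P-1}n_{N-P+i}$, and property (iv) is exactly Lemma~\ref{lem:expression for xN and x'N}(ii)(b).

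The last step, property (vii), is the whole point of the construction. By our choices,
$$\sum_{i=1}^{D}d_{N,i}\alpha^{d(N,i)}=x_N\quad\text{and}\quad\sum_{j=1}^{F}f_{N,j}\alpha^{f(N,j)}=x'_N,$$
so the left-hand side of \eqref{eq:exist DF last property} is
$$(s-s_{N-P})x_N-x'_N,$$
which equals $(s-s_{N-1})x_N$ by the identity \eqref{eq:important when Q=P-1} provided by the assumption $Q=P-1$. There is no main obstacle here; the proposition is a purely formal statement recording an explicit family of expressions for $(s-s_{N-1})x_N$, and the substantive work (picking a minimal such expression, running the Subspace Theorem, extracting the final contradiction) is deferred to the steps that follow.
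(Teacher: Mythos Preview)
Your proposal is correct and follows essentially the same approach as the paper: set $D=2^{P-1}$, $F=(P-1)2^{P-2}$, take the $(d_{N,i},d(N,i))$ and $(f_{N,j},f(N,j))$ to be exactly the $(x_{N,i},x(N,i))$ and $(x'_{N,j},x'(N,j))$ of Lemma~\ref{lem:expression for xN and x'N}, and deduce (vii) from \eqref{eq:important when Q=P-1}. The paper's proof is slightly terser but otherwise identical in content.
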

	\begin{proof}
	The proof is very similar to the proof
	of Proposition~\ref{prop:exist D, E and F}. 
	Choose $\scrN$ to be the set of all sufficiently large integers
	and
	$D=2^{P-1}$. We want the sum
	$\displaystyle\sum_{i=1}^D(s-s_{N-P})d_{N,i}\alpha^{d(N,i)}$
	to be $(s-s_{N-P})x_N$. We simply choose the
	$d_{N,i}$ and $d(N,i)$ 
	to be the $x_{N,i}$ and $x(N,i)$. 
	
	Then we choose $F=(P-1)2^{P-2}$  and we want the sum $\displaystyle\sum_{j=1}^F f_{N,j}\alpha^{f(n,j)}$
	to be 
	$$x'_N=\sum_{j=1}^{(P-1)2^{P-2}}x'_{N,j}\alpha^{x'(N,j)},$$
	so we simply take the $f_{N,j}$ and $f(N,j)$ to be
	the $x'_{N,j}$ and $x(N,j)$. The properties (i)-(vi)
	follow from \eqref{eq:important when Q=P-1}
	and Lemma~\ref{lem:expression for xN and x'N}.
	\end{proof}

	\begin{definition}\label{def:minimal D+F}
	Among all the collections of data $(D,F,\scrN,\ldots)$ satisfying properties (i)-(vii) in Proposition~\ref{prop:exist D and F}, we choose one
	for which $D+F$ is minimal. By abusing the notation, we
	still use the same notation $D$, $F$, $\scrN$, $d_{N,i}$'s, $d(N,i)$'s, $f_{N,j}$'s, 
	and $f(N,j)$'s for
	this chosen data with minimal $D+F$.
	\end{definition}
	
	\begin{remark}
	As before, we allow the possibility that $D$ or $F$ is $0$. Note
	that the scenario $D=F=0$ cannot happen since the RHS of
	\eqref{eq:exist DF last property} is non-zero.
	\end{remark}
	
	\begin{lemma}\label{lem:D+F terms are nonzero}
	There are at most finitely many $N$ in $\scrN$ such that one of the
	terms $(s-s_{N-P})d_{N,i}\alpha^{d(N,i)}$,
	$f_{N,j}\alpha^{f(N,j)}$ for $1\leq i\leq D$
	and $1\leq j\leq F$ is zero.
	\end{lemma}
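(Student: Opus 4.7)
The strategy is to mimic the ``minimality trick'' used for Lemma~\ref{lem:D+E+F terms are non-zero}: argue by contradiction using the minimality of $D+F$ built into Definition~\ref{def:minimal D+F}. So I would suppose, for contradiction, that for infinitely many $N\in\scrN$ at least one of the displayed terms vanishes.

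Since the total number of such terms is bounded by $2^{P-1}+(P-1)2^{P-2}$ (independent of $N$) from property~(i) of Proposition~\ref{prop:exist D and F}, a pigeonhole argument yields an infinite subset $\scrN'\subseteq \scrN$ and a single fixed index --- say $i^*\in\{1,\ldots,D\}$ with $(s-s_{N-P})d_{N,i^*}\alpha^{d(N,i^*)}=0$ for every $N\in\scrN'$, or similarly some $j^*\in\{1,\ldots,F\}$ with $f_{N,j^*}\alpha^{f(N,j^*)}=0$ for every $N\in\scrN'$.

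I would then exhibit a new collection of data contradicting minimality as follows. Replace $\scrN$ by $\scrN'$, and simply \emph{delete} the $i^*$-th entry from the $d$-tuples (or the $j^*$-th entry from the $f$-tuples), lowering $D$ (or $F$) by $1$. Since the deleted summand was identically zero on $\scrN'$, identity~\eqref{eq:exist DF last property} still holds. Properties~(i)--(vi) are inherited immediately --- the bound in~(i) is only made easier, the height and exponent bounds in~(ii)--(vi) survive because the new tuples are subtuples of the old. Hence the new collection is admissible with $D+F$ strictly smaller, contradicting Definition~\ref{def:minimal D+F}.

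There is no genuine obstacle here; the entire purpose of selecting a minimizer in Definition~\ref{def:minimal D+F} was to force any vanishing summand to be removable, and the proof is a one-line deletion argument. The only small bookkeeping point to verify is that dropping a single summand preserves the right-hand side of \eqref{eq:exist DF last property}, which is immediate because that summand was $0$ on $\scrN'$ by assumption.
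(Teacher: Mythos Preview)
Your proposal is correct and follows exactly the same minimality/deletion argument as the paper, which simply says ``This is similar to the proof of Lemma~\ref{lem:D+E+F terms are non-zero}.'' Your added pigeonhole step to fix a single index is a reasonable elaboration of what the paper leaves implicit.
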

	\begin{proof}
	This is similar to the proof of Lemma~\ref{lem:D+E+F terms are non-zero}.
	\end{proof}
	
	The reason we introduce the number $\tilde{x}(N,-)$ is 
	that the $d(N,i)$ for $1\leq i\leq D$ and $f(N,j)$ for $1\leq j\leq F$ are less than $\tilde{x}(N,-)+o(n_N)$ for an appropriate choice of $P$. This means that for any $\epsilon>0$, as long as $P$ is sufficiently large (depending on $\epsilon$), then the $d(N,i)$'s and $f(N,j)$'s are
	smaller than
	$\tilde{x}(N,-)+\epsilon n_N$. For our purpose, we state and
	prove the next result for the specific 
	value $\displaystyle\frac{c-2}{2c}$ for $\epsilon$; note that
	$\displaystyle\frac{c-2}{2c}n_N$ is at most half of the gap between $\tilde{x}(N,-)$ and $x(N,-)$ thanks to Lemma~\ref{lem:introducing x(N,+)...}. 
	
	\begin{proposition}\label{prop:small d(N,i) f(N,j)}
	Assume that $P$ satisfies:
	\begin{equation}\label{eq:P condition}
	\frac{2^{P-1}+(P-1)2^{P-2}}{c^{P-1}}<\frac{c-2}{4c}.
	\end{equation}
	Then for all but finitely many $N\in\scrN$,
	we have $\displaystyle d(N,i)\leq \tilde{x}(N,-)+\frac{c-2}{2c} n_N$
	and $f(N,j)\leq \displaystyle \tilde{x}(N,-)+\frac{c-2}{2c} n_N$ for
	$1\leq i\leq D$ and $1\leq j\leq F$.
	\end{proposition}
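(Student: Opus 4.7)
The plan is to prove Proposition~\ref{prop:small d(N,i) f(N,j)} by contradiction, combining the Subspace Theorem with the minimality from Definition~\ref{def:minimal D+F}, in the spirit of Proposition~\ref{prop:non-trivial relation must involve..}. Suppose that on some infinite $\scrN'\subseteq\scrN$ there is an index (which after relabeling and possibly swapping the $d$ and $f$ roles we take to be $i_0=1$) with $d(N,1) > \tilde{x}(N,-) + \frac{c-2}{2c}n_N$ for all $N\in\scrN'$.

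Since $s\in\Q(\alpha)$ by Proposition~\ref{prop:s in Qalpha}, I would work over $K=\Q(\alpha)$ with $S=M_K^\infty=\{v_\infty, w\}$ and apply the Subspace Theorem to the vectors $\bfv_N \in K^{D+F}$ whose coordinates are $w_{N,i} := (s-s_{N-P})d_{N,i}\alpha^{d(N,i)}$ for $1\leq i\leq D$ followed by $w_{N,D+j} := f_{N,j}\alpha^{f(N,j)}$ for $1\leq j\leq F$. At $v_\infty$ I would use the linear form $L_{v_\infty,0}=X_1+\cdots+X_D-Y_1-\cdots-Y_F$, which by \eqref{eq:exist DF last property} evaluates to $(s-s_{N-1})x_N$, together with the coordinate forms omitting $X_1$; at $w$ I would use all $D+F$ coordinate forms. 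By Lemma~\ref{lem:introducing x(N,+)...}(iii), $\vert L_{v_\infty,0}(\bfv_N)\vert_{v_\infty} = \vert\alpha\vert^{\tilde{x}(N,-)/2+o(n_N)}$, while $\vert w_{N,1}\vert_w = \vert\sigma(w_{N,1})\vert^{1/2} = \vert\alpha\vert^{-d(N,1)/2+o(n_N)}$ since $\vert\beta\vert=\vert\alpha\vert^{-1}$; the hypothesis on $d(N,1)$ then bounds this last factor above by $\vert\alpha\vert^{-\tilde{x}(N,-)/2-(c-2)/(4c)n_N+o(n_N)}$. The remaining archimedean factors pair across the two places into norms $\vert\Norm(w_{N,i})\vert^{1/2}$ for $i\geq 2$, each of which is at most $\vert\alpha\vert^{o(n_N)}$ because $\alpha$ is a unit and all the $d_{N,i}, f_{N,j}, s-s_{N-P}$ have controlled heights. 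Combining, $\prod_{v\in S}\prod_j\vert L_{vj}(\bfv_N)\vert_v \leq \vert\alpha\vert^{-(c-2)/(4c)n_N+o(n_N)}$. After absorbing the non-archimedean contribution $H(s-s_{N-P})^{D+F}\leq \vert\alpha\vert^{(c-2)/(4c(c-1))n_N+o(n_N)}$ (which is where the hypothesis~\eqref{eq:P condition} is used) and the bound $H(\tilde{\bfv}_N)\leq \vert\alpha\vert^{O(n_N)}$, a sufficiently small $\epsilon>0$ makes the Subspace Theorem hypothesis hold on $\scrN'$.

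By the Subspace Theorem and pigeonhole on the finitely many resulting $K$-hyperplanes, after passing to a further infinite $\scrN''\subseteq\scrN'$ one obtains a non-trivial $\Q(\alpha)$-linear relation $\sum_i\lambda_i(s-s_{N-P})d_{N,i}\alpha^{d(N,i)}+\sum_j\mu_j f_{N,j}\alpha^{f(N,j)}=0$ valid for all $N\in\scrN''$. I would then solve this relation for the term with an index $i^*$ for which $\lambda_{i^*}\neq 0$ (or, if all $\lambda_i=0$, for some $j^*$ with $\mu_{j^*}\neq 0$) and substitute into~\eqref{eq:exist DF last property}. The resulting data has $D+F-1$ terms whose new coefficients $(1-\lambda_i/\lambda_{i^*})d_{N,i}$ and $(1+\mu_j/\lambda_{i^*})f_{N,j}$ remain in $\Q(\alpha)$, still have heights $o(n_N)$, and inherit properties (i)--(vii) of Proposition~\ref{prop:exist D and F}, contradicting the minimality of $D+F$. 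In the degenerate case that every new coefficient vanishes, the surviving identity forces $(s-s_{N-1})x_N=0$, contradicting~\eqref{eq: the s and sk's}. The symmetric case of a violating $f(N,j_0)$ is handled identically by excluding $Y_{j_0}$ at $v_\infty$ in place of $X_1$. I expect the main obstacle to be the delicate archimedean bookkeeping combined with the non-archimedean denominator estimate, where the hypothesis~\eqref{eq:P condition} is precisely what allows the Subspace product to dominate.
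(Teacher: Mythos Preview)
Your proposal is correct and follows essentially the same route as the paper: argue by contradiction, apply the Subspace Theorem over $\Q(\alpha)$ with $S=\{v_\infty,w\}$ to the vector of the $D+F$ terms in \eqref{eq:exist DF last property}, replace one coordinate form at $v_\infty$ by the ``sum'' form whose value is $(s-s_{N-1})x_N$, and then use the resulting $\Q(\alpha)$-linear relation to contradict the minimality of $D+F$. The only cosmetic differences are that you package the contribution of the remaining coordinate forms as norms $|\Norm(w_{N,i})|^{1/2}$ (while the paper appeals to the product formula together with the bound on $H(s_{N-P})$), and you record the non-archimedean contribution with an extra factor $1/(c-1)$, which is a slightly sharper but equivalent use of hypothesis~\eqref{eq:P condition}.
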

	\begin{proof}
	We prove by contradiction and without loss of generality, we only
	need to consider 2  cases:
	\begin{itemize}
		\item Case 1: there exists an infinite subset $\scrN'$ of $\scrN$ such that $d(N,1)>\tilde{x}(N,-)+\frac{c-2}{2c}n_N$ for every
		$N\in \scrN'$.
		
		\item Case 2: there exists an infinite subset $\scrN'$ of $\scrN$ such that $f(N,1)>\tilde{x}(N,-)+\frac{c-2}{2c}n_N$ for every
		$N\in \scrN'$.
	\end{itemize}
	
	First, we assume Case 1. Let $\epsilon>0$ be a small number that 
	will be specified later. For all sufficiently large $N\in\scrN'$, 
	we have:
	\begin{align}	
	\begin{split}	
		\vert\sum_{i=1}^D(s-s_{N-P})d_{N,i}\alpha^{d(N,i)}-\sum_{j=1}^F f_{N,j}\alpha^{f(N,j)}\vert
		&=\vert (s-s_{N-1})x_N\vert\\
		&<\vert\alpha\vert^{\tilde{x}(N,-)+\epsilon n_N}
	\end{split}
	\end{align}
	by \eqref{eq:exist DF last property} and Lemma~\ref{lem:introducing x(N,+)...}.
	We now apply the Subspace Theorem over the field $\Q(\alpha)$. Let $S=\{v_{\infty},w\}$
	be its archimedean places as described in the proof of Proposition~\ref{prop:s in Qalpha}. We work with linear forms in the variables:
	$$(X_1,\ldots,X_D,Y_1\ldots,Y_F)$$
	 and the vectors
	 \begin{align*}
	 \bfv_N=(((s-s_{N-P})d_{N,i}\alpha^{d(N,i)})_{1\leq i\leq D},(-f_{N,j}\alpha^{f(N,j)})_{1\leq j\leq F})
	 \end{align*}
	for large $N\in\scrN'$. For $v\in S$, the linear forms are denoted
	 $L_{v,X,i}$ for $1\leq i\leq D$, and $L_{v,Y,j}$ for
	 $1\leq j\leq F$. They are defined as follows:
	 \begin{itemize}
		\item For any $v\in S$, $L_{v,X,i}=X_i$ for $2\leq i\leq D$
		and $L_{v,Y,j}=Y_j$ for $1\leq j\leq F$.
		\item If $v=w$, define $L_{v,X,1}=X_1$. 
		\item If $v=v_\infty$, define
		$$L_{v,X,1}=X_1+\ldots+X_D+Y_1+\ldots+Y_F.$$ 
	 \end{itemize}
	Since $s$ is irrational by Corollary~\ref{cor:s is irrational}
	and $\vert s-s_{N-P}\vert=o(1)$, we have
	$\vert s-s_{N-P}\vert_w=O(1)$. Together with the fact that
	the $d_{N,i}$'s and $f_{N,j}$'s have multiplicative height
	$e^{o(n_N)}$, for all large $N\in\scrN'$, we have:
	\begin{equation}\label{eq:DF ST 1}
	\prod_{v\in S}\vert L_{v,X,1}(\bfv_N)\vert_v<\vert\alpha\vert^{(-d(N,1)+\tilde{x}(N,-)+\epsilon n_N)/2}<\vert\alpha\vert^{(-(c-2)/(2c)+\epsilon)n_N/2}
	\end{equation}
	where the last inequality follows from the assumption
	in Case 1.

	As in the proof of Proposition~\ref{prop:s in Qalpha} and using $H(s-s_{N-P})=O(H(s_{N-P}))$, we have:
	\begin{align}\label{eq:DF ST 2}
	\begin{split}
	H(\tilde{\bfv}_N)^{D+F}\prod_{v\in S}\prod_{L}\frac{1}{\vert\bfv_N\vert_v}&=\left(\prod_{v\in M_K\setminus S}\vert\bfv_N\vert_v\right)^{D+F}\\
	&\leq H(s_{N-P})^{D+F}\vert\alpha\vert^{\epsilon n_N}\\
	&\leq \vert\alpha\vert^{(D+F)n_{N-P+1}+\epsilon n_N}
	\end{split}
 	\end{align}
 	for all sufficiently large $N\in\scrN'$. Recall that $D+F\leq 2^{P-1}+(P-1)2^{P-2}$, as before,
 	we can apply the Subspace Theorem if:
 	$$\frac{2^{P-1}+(P-1)2^{P-2}}{c^{P-1}}+\epsilon +\frac{1}{2}\left(-\frac{c-2}{2c}+\epsilon\right)<0$$
 	or in other words
 	$$\frac{2^{P-1}+(P-1)2^{P-2}}{c^{P-1}}+\frac{3}{2}\epsilon<\frac{c-2}{4c}.$$
 	We can choose such an $\epsilon$ thanks to the given condition on $P$. 
 	Then the Subspace Theorem yields a non-trivial linear relation
 	over $\Q(\alpha)$ among the $(s-s_{N-P})d_{N,i}\alpha^{d(N,i)}$'s and $f_{N,j}\alpha^{f(N,j)}$'s for $N$ in an infinite subset 
 	$\scrN''$ of $\scrN'$. This allows us to express one of them as a linear combination of the other terms and we obtain a new data satisfying the properties stated in Proposition~\ref{prop:exist D and F} in which $\scrN$ is replaced by $\scrN''$ and $D+F$ is replaced by $D+F-1$, contradicting the minimality of $D+F$. This shows that Case 1 cannot happen.
 	
 	By similar arguments, we have that Case 2 cannot happen either. For Case 2, we consider the same $\bfv_N$'s, the variables
 	$X_1,\ldots,X_D,Y_1,\ldots,Y_F$, and $S$ as in Case 1 while the linear forms $L_{x,X,i}$'s and $L_{v,Y,j}$'s are defined as folows:
 	\begin{itemize}
 		\item For any $v\in S$, $L_{v,X,i}=X_i$ for $1\leq i\leq D$
 		and $L_{v,Y,j}=Y_j$ for $2\leq j\leq F$.
 		
 		\item If $v=w$, define $L_{v,Y,1}=Y_1$.
 		
 		\item If $v=v_{\infty}$, define $L_{v,Y,1}=X_1+\ldots+X_D+Y_1+\ldots+Y_F$.
 	\end{itemize}
 	Then we proceed as before and arrive at a contradiction.
 	\end{proof}	
	
	We are now at the final stage of the proof of Theorem~\ref{thm:general}.
	\begin{notation}\label{conv:2^(P-1) for x(N,-)} 
	Let 
	$i^-\in\{1,\ldots,2^{P-1}\}$ such that that $x(N,i^-)=x(N,-)$.
	Let
	$$I^-:=\{i\in\{1,\ldots,2^{P-1}\}:\ x(N,i)<x(N,i^-)\} \text{and}$$
	$$I^+:=\{i\in\{1,\ldots,2^{P-1}\}:\ x(N,i)>x(N,i^-)\}.$$
	Note that $\vert I^-\vert=2^{P-2}-1$ and $\vert I^+\vert=2^{P-2}$. 
	\end{notation}
	
	We apply $\sigma$ to both sides of:
	\begin{equation}
	\sum_{i=1}^D(s-s_{N-P})d_{N,i}\alpha^{d(N,i)}-\sum_{j=1}^F f_{N,j}\alpha^{f(N,j)}
		=(s-s_{N-1})x_N,
	\end{equation}	
	and recall that $\sigma(\alpha)=\beta=\displaystyle\pm\frac{1}{\alpha}$ to get:
	\begin{align}
	\begin{split}
	&\sum_{i=1}^D(\sigma(s)-s_{N-P})\sigma(d_{N,i})(\pm 1)^{d(N,i)}\alpha^{-d(N,i)}-\sum_{j=1}^F \sigma(f_{N,j})(\pm 1)^{f(N,j)}\alpha^{-f(N,j)}\\
		&=(\sigma(s)-s_{N-1})x_N.
	\end{split}
	\end{align}
	Then taking the difference of the previous 2 equations, we get:
	\begin{align}\label{eq:difference of 2 equations}
	\begin{split}
	(\sigma(s)-s)x_N=&\sum_{i=1}^D(\sigma(s)-s_{N-P})\sigma(d_{N,i})(\pm 1)^{d(N,i)}\alpha^{-d(N,i)}\\
	&-\sum_{j=1}^F \sigma(f_{N,j})(\pm 1)^{f(N,j)}\alpha^{-f(N,j)}\\
	&-\sum_{i=1}^D(s-s_{N-P})d_{N,i}\alpha^{d(N,i)}+\sum_{j=1}^F f_{N,j}\alpha^{f(N,j)}.
	\end{split}
	\end{align}
	
	Our idea to conclude the proof of Theorem~\ref{thm:general} is as 
	follows. Note that $\sigma(s)-s\neq 0$, hence the LHS of \eqref{eq:difference of 2 equations} contains the term
	$(\sigma(s)-s)x_{N,i^-}\alpha^{x(N,-)}$. Let $\delta_2=\displaystyle\frac{2(c-2)}{5^{P-1}(c-1)}$ as in the proof of 
	Lemma~\ref{lem:expression for xN and x'N} (for $Q=P-1$) and we have
	that there is a gap $\delta_2n_N$ between any two of the $x(N,i)$'s. However, since $\delta_2$ depends on $P$, the previous method of choosing a sufficiently large $P$ does not work if we ``play the $x(N,i)$'s against each other''. The whole point of 
	Lemma~\ref{lem:introducing x(N,+)...} and 
	Proposition~\ref{prop:small d(N,i) f(N,j)} is that we now
	have a gap of at least 
	$\displaystyle\frac{c-2}{2c}n_N$ between $x(N,-)$ and 
	any of the $\pm d(N,i)$'s,
	$\pm f(N,j)$'s
	and a gap of at least 
	$\displaystyle\frac{2(c-2)}{5(c-1)}n_N$ between $x(N,-)$ and any of the $x(N,i)$ with $i\in I^{+}$.
	
	\begin{proposition}\label{prop:exist A,B,C,V,W}
	Recall $\delta_2=\displaystyle\frac{2(c-2)}{5^{P-1}(c-1)}$ and let $\theta=\min\left\{\displaystyle\frac{c-2}{2c},\displaystyle\frac{2(c-2)}{5(c-1)}\right\}$. There exist integers $A,B,C,V,W\geq 0$, an infinite subset $\scrN'$ of $\scrN$, and tuples 
	\begin{itemize}
	\item $(a_{N,1},\ldots,a_{N,A})$, $(a(N,1),\ldots,a(N,A))$, 
	\item $(b_{N,1},\ldots,b_{N,B})$, $(b(N,1),\ldots,b(N,B))$,
	\item $(c_{N,1},\ldots,c_{N,C})$, $(c(N,1),\ldots,c(N,C))$,
	\item $(v_{N,1},\ldots,v_{N,V})$, $(v(N,1),\ldots,v(N,V))$, 
	\item $(w_{N,1},\ldots,w_{N,W})$ and $(w(N,1),\ldots,w(N,W))$
	\end{itemize}
	for each $N\in\scrN'$ with the following properties:
	\begin{itemize}
		\item [(i)] $A+B+C+V+W\leq 2^{P-1}-1+2(D+F)\leq 2^P+P2^{P-1}-1$.
		\item [(ii)] For every $N\in\scrN'$, the $a(N,i)$'s, $b(N,j)$'s, $c(N,k)$, $v(N,\ell)$'s, and $w(N,m)$'s are integers
		for every $i,j,k,\ell,m$. 
		\item [(iii)] For every $N\in\scrN'$, we have:
		$$\max_{i,j,k,\ell,m}\{\vert a(N,i)\vert,\vert b(N,j)\vert, \vert c(N,k)\vert,\vert v(N,\ell)\vert, \vert w(N,m)\vert\}<2n_N.$$
		\item [(iv)] For every $N\in\scrN'$, we have $a(N,i)\leq -\delta_2 n_N$, $b(N,j)\leq -\theta n_N$, $c(N,k)\geq \theta n_N$, $v(N,\ell)\leq-\theta n_N$, and $w(N,m)\geq \theta n_N$ for every $i,j,k,\ell,m$.
		\item [(v)] For every $N\in\scrN'$, the $a_{N,i}$'s, $b_{N,j}$'s, $c_{N,k}$'s, $v_{N,\ell}$'s, and $w_{N,m}$'s are elements of $\Q(\alpha)$.
		\item [(vi)] As $N\to\infty$, we have
		$h(a_{N,i})/n_N\to 0$, $h(b_{N,j})/n_N\to 0$, $h(c_{N,k})/n_N\to 0$, $h(v_{N,\ell})/n_N\to 0$, and $h(w_{N,m})/n_N\to 0$ for every $i,j,k,\ell,m$. 
		\item [(vii)] For every $N\in\scrN'$, we have:
		\begin{align}\label{eq:exist ABCVW, last property}
		\begin{split}
			\sigma(s)-s=&\sum_{i=1}^A a_{N,i}\alpha^{a(N,i)}+\sum_{j=1}^B b_{N,j}\alpha^{b(N,j)}+\sum_{k=1}^Cc_{N,k}\alpha^{c(N,k)}\\
			&\sum_{\ell=1}^V (s-s_{N-P})v_{N,\ell}\alpha^{v(N,\ell)}+\sum_{m=1}^W (\sigma(s)-s_{N-P})w_{N,m}\alpha^{w(N,m)}.
		\end{split}
		\end{align}
	\end{itemize}
	\end{proposition}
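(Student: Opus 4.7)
The plan is to derive the identity \eqref{eq:exist ABCVW, last property} directly from the exact equation \eqref{eq:difference of 2 equations}, which has the form $(\sigma(s)-s)x_N = R_N$ where $R_N$ is the four-part sum on its right-hand side. Using the expansion from Lemma~\ref{lem:expression for xN and x'N}, I split
$$x_N \;=\; x_{N,i^-}\alpha^{x(N,-)} + X_N^+ + X_N^-, \qquad X_N^{\pm} := \sum_{i\in I^{\pm}} x_{N,i}\alpha^{x(N,i)},$$
move $(\sigma(s)-s)(X_N^+ + X_N^-)$ to the right side, and divide through by the nonzero quantity $x_{N,i^-}\alpha^{x(N,-)}$. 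This yields an \emph{exact} expression for $\sigma(s)-s$, and every resulting term on the right can be sorted into one of the five families $a,b,c,v,w$. The subset $\scrN' \subseteq \scrN$ is taken cofinite, so that Lemma~\ref{lem:introducing x(N,+)...} and Proposition~\ref{prop:small d(N,i) f(N,j)} apply throughout.

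Each of the four summands of $R_N$, after division by $x_{N,i^-}\alpha^{x(N,-)}$, lands in exactly one family: the $(\sigma(s)-s_{N-P})\sigma(d_{N,i})(\pm1)^{d(N,i)}\alpha^{-d(N,i)}$ terms become $w$-terms with exponent $-d(N,i)-x(N,-)$; the $\sigma(f_{N,j})(\pm1)^{f(N,j)}\alpha^{-f(N,j)}$ and $f_{N,j}\alpha^{f(N,j)}$ terms become $c$- and $b$-terms respectively (no $s$-dependent coefficient survives); and the $(s-s_{N-P})d_{N,i}\alpha^{d(N,i)}$ terms become $v$-terms. The two families coming from moving $X_N^{\pm}$ across have coefficients of the form $-(\sigma(s)-s)x_{N,i}/x_{N,i^-}$, which lie in $\Q(\alpha)$ because $s\in\Q(\alpha)$ by Proposition~\ref{prop:s in Qalpha}, and still have height $o(n_N)$; the $i\in I^-$ part has exponent $x(N,i)-x(N,-)\le -\delta_2 n_N$ by Lemma~\ref{lem:expression for xN and x'N}(i)(e) (applied with $Q=P-1$) and so joins the $a$-family, while the $i\in I^+$ part has exponent $\ge x(N,+)-x(N,-) \ge \frac{2(c-2)}{5(c-1)}n_N \ge \theta n_N$ by Lemma~\ref{lem:introducing x(N,+)...}(ii) and so merges into the $c$-family.

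The exponent inequalities in (iv) coming from $R_N$ are where Proposition~\ref{prop:small d(N,i) f(N,j)} does the work. It provides $d(N,i) \le \tilde{x}(N,-)+\frac{c-2}{2c}n_N$, which combined with the gap $x(N,-)-\tilde{x}(N,-)\ge \frac{c-2}{c}n_N$ from Lemma~\ref{lem:introducing x(N,+)...}(ii) upgrades to $d(N,i) \le x(N,-)-\frac{c-2}{2c}n_N$. Therefore $d(N,i)-x(N,-) \le -\frac{c-2}{2c}n_N \le -\theta n_N$ (yielding the $v$-exponent bound), and
$$-d(N,i)-x(N,-) \;\ge\; \frac{c-2}{2c}n_N + \bigl(x(N,+)-x(N,-)\bigr) \;\ge\; \theta n_N$$
(yielding the $w$-exponent bound); the identical chain of inequalities with $f$ in place of $d$ gives the $b$- and $c$-exponent bounds. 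Condition (iii) follows from the crude bounds $\vert d(N,i)\vert,\vert f(N,j)\vert,\vert x(N,-)\vert < n_N$; (v) is immediate since everything lies in $\Q(\alpha)$; and (vi) holds because every height involved is $o(n_N)$.

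Finally, the counting in (i) is pure bookkeeping: $I^-$ contributes $\vert I^-\vert = 2^{P-2}-1$ to $A$, the $f_{N,j}\alpha^{f(N,j)}$ terms contribute at most $F$ to $B$, the $\sigma(f_{N,j})(\pm1)^{f(N,j)}\alpha^{-f(N,j)}$ terms together with $I^+$ contribute at most $F + 2^{P-2}$ to $C$, and the two $d$-families contribute at most $D$ each to $V$ and $W$. Summing gives $A+B+C+V+W \le 2^{P-1}-1 + 2(D+F)$, and plugging in the bound $D+F \le 2^{P-1}+(P-1)2^{P-2}$ from Proposition~\ref{prop:exist D and F}(i) produces $(P+2)2^{P-1}-1 = 2^P + P\cdot 2^{P-1}-1$. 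There is no genuine obstacle — only one subtle point: the ``extra'' factor $(\sigma(s)-s)$ produced by moving $X_N^{\pm}$ across is a fixed nonzero element of $\Q(\alpha)$, so it folds cleanly into the coefficients of the $a$- and $c$-families without violating (v) or (vi). Once that observation is made, the argument is a direct computation.
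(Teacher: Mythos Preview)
Your proposal is correct and follows essentially the same approach as the paper: you start from \eqref{eq:difference of 2 equations}, isolate the term $x_{N,i^-}\alpha^{x(N,-)}$ in $x_N$, divide through by it, and then sort the resulting terms into the five families $a,b,c,v,w$ with exactly the same assignments (the $I^-$ part to $a$, the $f$-terms to $b$, the $\sigma(f)$-terms together with $I^+$ to $c$, and the two $d$-families to $v$ and $w$). Your exponent verifications, counting, and use of Lemma~\ref{lem:introducing x(N,+)...} and Proposition~\ref{prop:small d(N,i) f(N,j)} mirror the paper's proof; the only cosmetic difference is that your bound for the $w$-exponents is displayed slightly differently (you keep the extra $x(N,+)-x(N,-)$ summand explicit) but it yields the same conclusion.
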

	\begin{proof}
	We use \eqref{eq:difference of 2 equations} and the expression for $x_N$ in Lemma~\ref{lem:expression for xN and x'N}, then divide both sides by
	$x_{N,i^-}\alpha^{x(N,-)}$ to get:
	\begin{align}
	\begin{split}
	 \sigma(s)-s=&-\sum_{i\in I^-}\frac{(\sigma(s)-s)x_{N,i}}{x_{N,i^-}}\alpha^{x(N,i)-x(N,-)}\\
	 &-\sum_{i\in I^+}\frac{(\sigma(s)-s)x_{N,i}}{x_{N,i^-}}\alpha^{x(N,i)-x(N,i^-)}\\
	 &-\sum_{i=1}^F\frac{\sigma(f_{N,i})}{x_{N,i^-}}(\pm 1)^{f(N,i)}\alpha^{-f(N,i)-x(N,-)}\\
	 &+\sum_{i=1}^F\frac{f_{N,i}}{x_{N,i^-}}\alpha^{f(N,i)-x_(N,-)}\\
	&-\sum_{i=1}^D(s-s_{N-P})\frac{d_{N,i}}{x_{N,i^-}}\alpha^{d(N,i)-x(N,-)}\\
	&+\sum_{i=1}^D(\sigma(s)-s_{N-P})\frac{\sigma(d_{N,i})}{x_{N,i^-}}(\pm 1)^{d(N,i)}\alpha^{-d(N,i)-x(N,-)}
	\end{split}
	\end{align}
	Let $\scrN'$ be the set of all sufficiently large $N\in\scrN$; in the following, $N$ is an element of $\scrN'$.
	We want $\displaystyle\sum_{i=1}^A a_{N,i}\alpha^{a(N,i)}$ to be
	$\displaystyle-\sum_{i\in I^-}\frac{(\sigma(s)-s)x_{N,i}}{x_{N,i^-}}\alpha^{x(N,i)-x(N,-)}$.
	Therefore we let $A=\vert I^-\vert=2^{P-2}-1$,
	let the $a_{N,i}$ and $a(N,i)$ for $1\leq i\leq A$ be 
	respectively
	the $-\displaystyle\frac{(\sigma(s)-s)x_{N,i}}{x_{N,i^-}}$ and
	$x(N,i)-x(N,-)$ for $i\in I^-$. By 
	Lemma~\ref{lem:expression for xN and x'N}
	and the definition of $I^-$, we have 
	$a(N,i)\leq -\delta_2n_N$.
	 
	We want $\displaystyle\sum_{j=1}^B b_{N,j}\alpha^{b(N,j)}$ to
	be $\displaystyle\sum_{i=1}^F\frac{f_{N,i}}{x_{N,i^-}}\alpha^{f(N,i)-x_(N,-)}$. Therefore we let $B=F$, let the $b_{N,j}$ and $b(N,j)$ for $1\leq j\leq B$ be respectively the
	$\displaystyle\frac{f_{N,i}}{x_{N,i^-}}$ and 
	$f(N,i)-x(N,-)$ for $1\leq i\leq F$. By 
	Lemma~\ref{lem:introducing x(N,+)...} and 
	Proposition~\ref{prop:small d(N,i) f(N,j)}, we have:
	$$f(N,i)-x(N,-)\leq \tilde{x}(N,-)+\frac{c-2}{2c}n_N-x(N,-)\leq -\frac{c-2}{2c}n_N\leq -\theta n_N$$
	for $1\leq i\leq F$.
	
	We want $\displaystyle\sum_{k=1}^C c_{N,k}\alpha^{c(N,k)}$ to
	be:
	$$-\sum_{i\in I^+}\frac{(\sigma(s)-s)x_{N,i}}{x_{N,i^-}}\alpha^{x(N,i)-x(N,i^-)}-\sum_{i=1}^F\frac{\sigma(f_{N,i})}{x_{N,i^-}}(\pm 1)^{f(N,i)}\alpha^{-f(N,i)-x(N,-)}.$$
	We let $C=\vert I^+\vert+F=2^{P-2}+F$ and specify the $c_{N,k}$ and $c(N,k)$
	in the same manner as before. Note that $x(N,+)$ is
	the minimum among the $x(N,i)$ for $i\in I^+$ while
	Lemma~\ref{lem:introducing x(N,+)...} and 
	Proposition~\ref{prop:small d(N,i) f(N,j)} yields
	$$-f(N,i)\geq -\tilde{x}(N,-)-\frac{c-2}{2c}n_N=\tilde{x}(N,+)-\frac{c-2}{2c}n_N>x(N,+).$$
	This guarantees $c(N,k)\geq x(N,+)-x(N,-)\geq \displaystyle\frac{2(c-2)}{5(c-1)}n_N\geq \theta n_N$.
	
	Finally, we want $\displaystyle\sum_{\ell=1}^V (s-s_{N-P})v_{N,\ell}\alpha^{v(N,\ell)}$ to be 
	$$\displaystyle-\sum_{i=1}^D(s-s_{N-P})\frac{d_{N,i}}{x_{N,i^-}}\alpha^{d(N,i)-x(N,-)}$$
	and want $\displaystyle\sum_{m=1}^W (\sigma(s)-s_{N-P})w_{N,m}\alpha^{w(N,m)}$ to be
	$$\sum_{i=1}^D(\sigma(s)-s_{N-P})\frac{\sigma(d_{N,i})}{x_{N,i^-}}(\pm 1)^{d(N,i)}\alpha^{-d(N,i)-x(N,-)}.$$
	We let $V=W=D$ and similar arguments can be used to finish the proof; note that with our choice:
	$$A+B+C+V+W=2^{P-1}-1+2(D+F)\leq 2^P+P2^{P-1}-1$$
	where the last inequality follows from 
	Proposition~\ref{prop:exist D and F}.
	\end{proof}
	
	\begin{definition}\label{def:minimal A+B+C+V+W}
	Among all the collections of data $(A,B,C,V,W,\scrN',\ldots)$ satisfying properties (i)-(vii) in Proposition~\ref{prop:exist A,B,C,V,W}, we choose one
	for which $A+B+C+V+W$ is minimal. By abusing the notation, we
	still use the same notation $A$, $B$, $C$, $V$, $W$, $\scrN'$, $a_{N,i}$'s, $a(N,i)$'s, $b_{N,j}$'s, $b(N,j)$'s, $c_{N,k}$'s,
	$c(N,k)$'s, $v_{N,\ell}$'s, $v(N,\ell)$'s, $w_{N,m}$, and
	$w(N,m)$'s for
	this chosen data with minimal $A+B+C+V+W$.
	\end{definition}
	
	Another application of the Subspace Theorem yields the following:
	\begin{proposition}\label{prop:B=C=V=W=0}
	Recall that $\theta=\displaystyle\min\left\{\frac{c-2}{2c},\frac{2(c-2)}{5(c-1)}\right\}$. Assume that $P$ satisfies:
	\begin{equation}\label{eq:P final condition}
	\frac{2^P+P2^{P-1}-1}{c^{P-1}}<\frac{\theta}{2}.
	\end{equation}
	Then we have $B=C=V=W=0$.
	\end{proposition}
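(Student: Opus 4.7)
The plan is to argue by contradiction, mirroring Proposition \ref{prop:small d(N,i) f(N,j)}: assume $B+C+V+W\geq 1$, apply the Subspace Theorem (over $\Q(\alpha)$ with $S=\{v_\infty,w\}$) to obtain a new $\Q(\alpha)$-linear relation among the coordinates of a suitable vector, and then use the minimality in Definition \ref{def:minimal A+B+C+V+W} for a contradiction.

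I consider the vector
\[
\bfv_N=\big((a_{N,i}\alpha^{a(N,i)})_i,\,(b_{N,j}\alpha^{b(N,j)})_j,\,(c_{N,k}\alpha^{c(N,k)})_k,\,((s-s_{N-P})v_{N,\ell}\alpha^{v(N,\ell)})_\ell,\,((\sigma(s)-s_{N-P})w_{N,m}\alpha^{w(N,m)})_m\big)
\]
with $n+1=A+B+C+V+W\leq 2^P+P\cdot 2^{P-1}-1$ coordinates, the bound matching exactly the numerator in \eqref{eq:P final condition}. Crucially, the constant term $\sigma(s)-s$ is absent from $\bfv_N$, so equation \eqref{eq:exist ABCVW, last property} imposes only an affine constraint on the coordinates (their sum equals $\sigma(s)-s$), not a projective-linear constraint, and any nontrivial $\Q(\alpha)$-linear relation produced by Subspace will be genuinely independent of \eqref{eq:exist ABCVW, last property}.

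At $v_\infty$ I take one of the $n+1$ linear forms to be the sum $L_{v_\infty,0}=T_1+\cdots+T_{n+1}$, so that $L_{v_\infty,0}(\bfv_N)=\sigma(s)-s$ by \eqref{eq:exist ABCVW, last property}, of absolute value $O(1)$ while the largest coordinate of $\bfv_N$ at $v_\infty$ has size $\sim|\alpha|^{\theta n_N/2}$; all other forms at $v_\infty$, and all forms at $w$, are coordinate projections. The computation in the style of the proof of Proposition \ref{prop:small d(N,i) f(N,j)}, together with the height bound $H(\tilde{\bfv}_N)\leq H(s_{N-P})\cdot|\alpha|^{o(n_N)}\leq|\alpha|^{n_N/(c^{P-1}(c-1))+o(n_N)}$ from Lemmas \ref{lem:height bound} and \ref{lem:sum of indices bound}, shows that the Subspace inequality holds precisely under hypothesis \eqref{eq:P final condition}. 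The Subspace Theorem then yields an infinite subset $\scrN''\subseteq\scrN'$ and a nontrivial $\Q(\alpha)$-linear relation among the coordinates of $\bfv_N$ for $N\in\scrN''$. Solving for one coordinate and substituting into \eqref{eq:exist ABCVW, last property} produces a new admissible collection in the sense of Proposition \ref{prop:exist A,B,C,V,W} with $A+B+C+V+W$ decreased by one, contradicting the minimality fixed in Definition \ref{def:minimal A+B+C+V+W}. Hence $B=C=V=W=0$.

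The main obstacle, and the point that explains the otherwise-mysterious insistence on omitting the constant coordinate from $\bfv_N$, is ensuring that the Subspace-produced relation does not trivially coincide with (a scalar multiple of) \eqref{eq:exist ABCVW, last property}; by dropping the constant one converts \eqref{eq:exist ABCVW, last property} into an affine (not projective-linear) constraint so that Subspace's conclusion is genuinely new. A secondary technical issue is to verify that eliminating one coordinate through this new relation preserves all properties (i)--(vii) of Proposition \ref{prop:exist A,B,C,V,W} for the reduced data -- in particular that the replaced $a_{N,i},b_{N,j},c_{N,k},v_{N,\ell},w_{N,m}$ remain in $\Q(\alpha)$ with heights $o(n_N)$ and exponents within the ranges specified by (iii)--(iv); this follows from the $\Q(\alpha)$-rationality of the coefficients of the new relation and the exponent-separation guaranteed by Proposition \ref{prop:exist A,B,C,V,W}(iv).
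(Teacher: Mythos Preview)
Your overall strategy---omit the constant $\sigma(s)-s$ from the vector $\bfv_N$, use the identity \eqref{eq:exist ABCVW, last property} as an \emph{affine} (not linear) constraint, apply the Subspace Theorem, and then reduce $A+B+C+V+W$ by one to contradict minimality---matches the paper's. The substitution step you describe is correct: any nontrivial $\Q(\alpha)$-linear relation among the coordinates lets you rescale the remaining ones by fixed elements of $\Q(\alpha)$, preserving properties (i)--(vii).

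There is, however, a genuine gap in the Subspace step. You assert that ``the largest coordinate of $\bfv_N$ at $v_\infty$ has size $\sim|\alpha|^{\theta n_N/2}$'', and this is what produces the gain when you replace that coordinate by the sum form $L_{v_\infty,0}$. But the only coordinates that are large at $v_\infty$ are the $C$- and $W$-type ones (those with \emph{positive} exponent $\geq\theta n_N$); the $A$-, $B$-, and $V$-type coordinates all have negative exponent and are \emph{small} at $v_\infty$. So if $C=W=0$ while $B>0$ or $V>0$, your single choice of placing the sum form at $v_\infty$ gives no gain at all, and the Subspace inequality fails.

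The paper handles this by splitting into four separate cases ($B>0$, $C>0$, $V>0$, $W>0$) and, in each, choosing the place for the sum form according to the \emph{sign} of the relevant exponent: for $B$ and $V$ (negative exponent) the coordinate form goes at $v_\infty$ where it is small and the sum form goes at $w$; for $C$ and $W$ (positive exponent) the coordinate form goes at $w$ and the sum form at $v_\infty$. In each case the product $\prod_{v}|L_{v,\ast,1}(\bfv_N)|_v$ is bounded by $|\alpha|^{-\theta n_N/2+o(n_N)}$, and the rest of the computation is exactly as you outline. Alternatively, you could observe that the case $C=W=0$ is immediately contradictory (all terms on the right of \eqref{eq:exist ABCVW, last property} would tend to $0$ at $v_\infty$ while the left side is a nonzero constant), which would justify restricting to $C+W\geq 1$ and salvage your single-placement argument; but this shortcut is not in your write-up and must be stated explicitly.
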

	\begin{proof}
	First, suppose that $B>0$. Let $\epsilon>0$ be a small number that will be specified later. We apply the Subspace Theorem over the field
	$\Q(\alpha)$ and let $S=\{v_{\infty},w\}$ be as before. We work
	with linear forms in the variables:
	$$((X_i)_{1\leq i\leq A},(Y_j)_{1\leq j\leq B},(Z_k)_{1\leq k
	\leq C}, (R_\ell)_{1\leq \ell\leq V}, (T_m)_{1\leq m\leq W})$$
	and the vectors
	\begin{align*}
		\bfv_N=(&(a_{N,i}\alpha^{a(N,i)})_{1\leq i\leq A},(b_{N,j}\alpha^{b(N,j)})_{1\leq j\leq B},((c_{N,k}\alpha^{c(N,k)})_{1\leq k\leq C},\\
		&((s-s_{N-P})v_{N,\ell}\alpha^{v(N,\ell)})_{1\leq\ell\leq V},((\sigma(s)-s_{N-P})w_{N,m}\alpha^{w(N,m)})_{1\leq m\leq W})
	\end{align*}
	for $N\in\scrN'$.
	For $v\in S$, the linear forms are denoted $L_{v,X,i}$,
	$L_{v,Y,j}$, $L_{v,Z,k}$, $L_{v,R,\ell}$, and $L_{v,T,m}$
	for $1\leq i\leq A$, $1\leq j\leq B$, $1\leq k\leq C$,
	$1\leq \ell\leq V$ and $1\leq m\leq W$
	and they are defined as follows:
	\begin{itemize}
	\item For any $v\in S$, $L_{v,X,i}=X_i$, $L_{v,Y,j}=Y_j$,
	$L_{v,Z,k}=Z_k$, $L_{v,R,\ell}=R_{\ell}$, and $L_{v,T,m}=T_m$
	for every $i,j,k,\ell,m$ \emph{except} when $j=1$.
	
	\item If $v=v_\infty$, define $L_{v,Y,1}=Y_1$.
	
	\item If $v=w$, define
	$$L_{v,Y,1}=\sum_{i=1}^A X_i+\sum_{j=1}^B Y_j+\sum_{k=1}^C Z_k+\sum_{\ell=1}^V R_\ell+\sum_{m=1}^W T_m.$$
	\end{itemize}
	Therefore if $v=v_\infty$, we have $\vert L_{v,Y,1}(\bfv_N)\vert_v=\vert b_{N,1}\vert^{1/2}\vert\alpha\vert^{b(N,1)/2}\leq \vert\alpha\vert^{(-\theta n_N/2)+o(n_N)}$ since $b_{N,1}$ has height $o(n_N)$ 
	while $b(N,1)\leq -\theta n_N$. If $v=w$, we have
	$L_{v,Y,1}(\bfv_N)=\sigma(s)-s$ thanks to
	\eqref{eq:exist ABCVW, last property}. Thus, arguing as before,
	 we have:
	 \begin{align}
	 \prod_{v\in S}\prod_{L}\vert L(\bfv_N)\vert_v< \vert\alpha\vert^{(-\theta+\epsilon)n_N/2}
	 \end{align}
	for all sufficiently large $N\in\scrN'$ where $L$ ranges over 
	all the $L_{v,X,i}$'s, $L_{v,Y,j}$'s, $L_{v,Z,k}$'s, $L_{v,R,\ell}$'s, and $L_{v,T,m}$. On the other hand,
	\begin{align}
	\begin{split}
		H(\tilde{\bfv}_N)^{A+B+C+V+W}\prod_{v\in S}\prod_{L}\frac{1}{\vert\bfv_N\vert_v}&=\left(\prod_{v\in M_K\setminus S}\vert\bfv_N\vert_v\right)^{A+B+C+V+W}\\
		&\leq H(s_{N-P})^{A+B+C+V+W}\alpha^{\epsilon n_N}\\
		&\leq \vert\alpha\vert^{(A+B+C+V+W)n_{N-P+1}+\epsilon n_N}.
	\end{split}
	\end{align}
	Since $A+B+C+V+W\leq 2^P+P2^{P-1}-1$, we can apply the Subspace Theorem if:
	$$\frac{2^P+P2^{P-1}-1}{c^{P-1}}+\epsilon+\frac{-\theta+\epsilon}{2}<0.$$
	At the beginning of the proof, can choose an $\epsilon$ satisfying the above inequality thanks to the condition on $P$. Then the Subspace Theorem implies that the coordinates of $\bfv_N$ satisfies a 
	non-trivial linear relation over $\Q(\alpha)$ for every $N$ in an infinite subset $\scrN''$ of $\scrN'$. Then we have a new data 
	satisfying the properties in Proposition~\ref{prop:exist A,B,C,V,W} in which
	$\scrN'$ is replaced by $\scrN''$ and $A+B+C+V+W$ is replaced
	by $A+B+C+V+W-1$; this contradicts the minimality of $A+B+C+V+W$.
	
	For the case $C>0$, $V>0$, or $W>0$, we use the same vectors $\bfv_N$ and the same notation for the variables and linear forms. In the case $C>0$, the linear forms are:
	\begin{itemize}
	\item For any $v\in S$, $L_{v,X,i}=X_i$, $L_{v,Y,j}=Y_j$,
	$L_{v,Z,k}=Z_k$, $L_{v,R,\ell}=R_{\ell}$, and $L_{v,T,m}=T_m$
	for every $i,j,k,\ell,m$ \emph{except} when $k=1$.
	
	\item If $v=w$, define $L_{v,Z,1}=Z_1$.
	
	\item If $v=v_\infty$, define
	$$L_{v,Z,1}=\sum_{i=1}^A X_i+\sum_{j=1}^B Y_j+\sum_{k=1}^C Z_k+\sum_{\ell=1}^V R_\ell+\sum_{m=1}^W T_m.$$
	\end{itemize}
	
	In the case $V>0$, the linear forms are:
	\begin{itemize}
	\item For any $v\in S$, $L_{v,X,i}=X_i$, $L_{v,Y,j}=Y_j$,
	$L_{v,Z,k}=Z_k$, $L_{v,R,\ell}=R_{\ell}$, and $L_{v,T,m}=T_m$
	for every $i,j,k,\ell,m$ \emph{except} when $\ell=1$.
	
	\item If $v=v_\infty$, define $L_{v,R,1}=R_1$.
	
	\item If $v=w$, define
	$$L_{v,R,1}=\sum_{i=1}^A X_i+\sum_{j=1}^B Y_j+\sum_{k=1}^C Z_k+\sum_{\ell=1}^V R_\ell+\sum_{m=1}^W T_m.$$
	\end{itemize}
	Finally, in the case $W>0$, the linear forms are:
	\begin{itemize}
	\item For any $v\in S$, $L_{v,X,i}=X_i$, $L_{v,Y,j}=Y_j$,
	$L_{v,Z,k}=Z_k$, $L_{v,R,\ell}=R_{\ell}$, and $L_{v,T,m}=T_m$
	for every $i,j,k,\ell,m$ \emph{except} when $m=1$.
	
	\item If $v=w$, define $L_{v,T,1}=T_1$.
	
	\item If $v=v_\infty$, define
	$$L_{v,T,1}=\sum_{i=1}^A X_i+\sum_{j=1}^B Y_j+\sum_{k=1}^C Z_k+\sum_{\ell=1}^V R_\ell+\sum_{m=1}^W T_m.$$
	\end{itemize}
	
	Then similar arguments as before lead to a contradiction. This finishes the proof.
	\end{proof}
	
	\begin{proof}[Completion of the proof of Theorem~\ref{thm:general}]
	At the beginning of this section, we fix a sufficiently large $P$
	satisfying both \eqref{eq:P condition} and \eqref{eq:P final condition}. Then the previous results show that there exist an infinite 
	set of positive integers $\scrN'$, an integer $A\geq 0$, tuples $(a_{N,1},\ldots,a_{N,A})$
	and $(a(N,1),\ldots,a(N,A))$ satisfying the conditions of
	Proposition~\ref{prop:exist A,B,C,V,W}; in particular:
	$$\sigma(s)-s=a_{N,1}\alpha^{a(N,1)}+\ldots+a_{N,A}\alpha^{a(N,A)}$$
	for every $N\in\scrN'$. However, each $\vert a_{N,i}\vert=\vert\alpha\vert^{o(n_N)}$
	as $N\to\infty$ while each $a(N,i)<\displaystyle-\frac{2(c-2)}{5^{P-1}(c-1)}n_N$. Let $N\to\infty$ the we have
	$$\sigma(s)-s=0$$
	contradicting the earlier results that $s\in\Q(\alpha)$ is irrational. This finishes the proof.
	\end{proof}
	
	\bibliographystyle{amsalpha}
	\bibliography{EG} 	

\end{document}